\def\jobis#1{FF\fi
  \def\predicate{#1}%
  \edef\predicate{\expandafter\strip@prefix\meaning\predicate}%
  \edef\job{\jobname}%
  \ifx\job\predicate
}
\if\jobis{proposal}%
\DeclareMathOperator{\Supp}{Supp}
\DeclareMathOperator{\codim}{codim}
 \numberwithin{equation}{subsection}
 \numberwithin{footnote}{subsection}
 \newtheorem{cor}[subsection]{Corollary}
 \newtheorem{lem}[subsection]{Lemma}
 \newtheorem{prop}[subsection]{Proposition}
 \newtheorem{thm}[subsection]{Theorem}
 \newtheorem{conj}[subsection]{Conjecture}
    \newtheoremstyle{upright}%
        {8pt plus2pt minus4pt}%
        {8pt plus2pt minus4pt}%
        {\upshape}%
        {}%
        {\bfseries\scshape}%
        {}%
        {1em}%
        {}%
\theoremstyle{upright}
 \newtheorem{exa}[subsection]{Example}
 \newtheorem{rem}[subsection]{Remark}
 \newcommand{\C}{\mathbb C}
 \newcommand{\Q}{\mathbb Q}
 \newcommand{\R}{\mathbb R}
 \newcommand{\bir}{\dashrightarrow}
\title{Log canonical pairs over varieties with maximal Albanese dimension}
\thanks{2010 MSC: 14E30}
\author{Zhengyu Hu}
\date{\today}
\begin{document}
\maketitle

\begin{abstract}
Let $(X,B)$ be a log canonical pair over a normal variety $Z$ with maximal Albanese dimension. If $K_X+B$ is relatively abundant over $Z$ (for example, 
$K_X+B$ is relatively big over $Z$), then we prove that  $K_X+B$ is abundant. In particular, the subadditvity of Kodaira dimensions $\kappa(K_X+B) \geq \kappa(K_F+B_F)+ \kappa(Z)$ holds, where $F$ is a general fiber, $K_F+B_F= (K_X+B)|_F$, and $\kappa(Z)$ means the Kodaira dimension of a smooth model of $Z$. We discuss several variants of this result in Section \ref{variants}.
We also give a remark on the log Iitaka conjecture for log canonical pairs in Section \ref{rem-conj}.
\end{abstract}



\section{Introduction}

Let $f: X \rightarrow Z$ be a surjective morphism from a smooth variety to a normal variety $Z$. An important conjecture in birational geometry is the \emph{Iitaka Conjecture} which asserts that 
$$
\kappa(X) \geq \kappa(F) +\kappa(Z)
$$
where $\kappa(X)$ is the Kodaira dimension of $X$, $F$ is a general fiber of $f$ and $\kappa(Z)$ means the Kodaira dimension of a smooth model of $Z$. This has been established in many cases [\ref{Kawamata-2}], [\ref{CH}], [\ref{BCZ}], [\ref{CZ}], [\ref{Fujino1}], [\ref{Fujino3}], Koll\'{a}r, Viehweg, etc.

It is natural to raise a similar conjecture of the \emph{log version}. Consider a surjective morphism $f: (X,B) \rightarrow Z$ from a log canonical (lc for short) pair to a normal variety $Z$. One conjectures that 
$$
\kappa(K_X+B) \geq \kappa(K_F+B_F) +\kappa(Z)
$$
where $K_F+B_F= (K_X+B)|_F$. This has also been established in many cases. Recently Cao and P\v{a}un [\ref{CP}], Hacon, Popa and Schnell [\ref{HPS}] proved this when $(X,B)$ is Kawamata log terminal (klt for short) and $Z=A$ is an abelian variety over $\C$ and Cao [\ref{Cao}] proved this when $(X,B)$ is klt and $Z$ is a complex surface. If $(X,B)$ is klt with an extra assumption of positivity on $K_X+B$ and $Z$ has maximal Albanese dimension, then Birkar and Chen [\ref{BC}] obtained a stronger result which further asserts that $(X,B)$ has a good log minimal model.

There are some other related conjectures for logarithmic Kodaira dimensions for algebraic fibrations. See Kawamata [\ref{Kawamata-3}], Fujino [\ref{Fujino2}], Iitaka [\ref{Iitaka}], etc.

In this paper we will mainly discuss the case when $(X,B)$ is log canonical and $Z$ has maximal Albanese dimension. We work over $\mathbb{C}$ throughout this paper. All varieties are quasi-projective and a divisor means a $\Q$-divisor unless stated otherwise.

\textbf{Abundance and good minimal models.}
The first main result of this paper is the following theorem.

\begin{thm}[=Theorem \ref{m-thm'}]\label{m-thm}
Let $f:(X,B) \rightarrow Z$ be a surjective morphism from a projective log canonical (lc for short) pair $(X,B)$ to a normal variety $Z$. Assume that $Z$ has maximal Albanese dimension and that $K_X+B$ is relatively abundant over $Z$. Then, $K_X+B$ is abundant. Moreover, the subadditvity of Kodaira dimensions $\kappa(K_X+B) \geq \kappa(K_F+B_F)+ \kappa(Z)$ holds, where $F$ is a general fiber, $K_F+B_F= (K_X+B)|_F$, and $\kappa(Z)$ means the Kodaira dimension of a smooth model of $Z$. 
\end{thm}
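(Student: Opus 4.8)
The plan is to reduce to the case where the base maps to an abelian variety and then to feed the resulting fibration into generic vanishing theory; this extends the klt, relatively big case established in [\ref{BC}].

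\textbf{(Reductions.)} If $K_X+B$ is not pseudo-effective, a relative MMP over $Z$ produces a Mori fibre space and $\kappa(K_F+B_F)=-\infty$, so both assertions are trivial; hence assume $K_X+B$ is pseudo-effective. Passing to a $\Q$-factorial dlt model affects none of the invariants involved, so assume $(X,B)$ is $\Q$-factorial dlt. Since $K_X+B$ is relatively abundant over $Z$, the pair has a good minimal model over $Z$; replacing $(X,B)$ by it we may assume $K_X+B$ is semiample over $Z$ (in particular we are reduced to a relatively big situation). Let $\phi\colon X\to T$ be the relative Iitaka fibration over $Z$, so $K_X+B=\phi^{*}L$ with $L$ ample over $Z$ via the induced morphism $\rho\colon T\to Z$; then $\kappa(X,K_X+B)=\kappa(T,L)$, the numerical Kodaira dimensions likewise agree, and $\dim T-\dim Z=\kappa(K_F+B_F)$ equals the numerical Kodaira dimension of $K_F+B_F$ by relative abundance. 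Finally replace $Z$ (and $X$ accordingly) by a smooth projective model, so $\kappa(Z)$ is the Kodaira dimension in the theorem, and put $A=\mathrm{Alb}(Z)$, $a\colon Z\to A$ the Albanese map, $\psi=a\circ\rho\colon T\to A$, $f=\rho\circ\phi$ and $h=\psi\circ\phi\colon X\to A$. It remains to show $L$ is abundant on $T$ and $\kappa(T,L)\ge(\dim T-\dim Z)+\kappa(Z)$.

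\textbf{(Subadditivity via weak positivity.)} For $m$ sufficiently divisible, $\rho_{*}\mathcal O_T(mL)=f_{*}\mathcal O_X\bigl(m(K_{X/Z}+B)\bigr)\otimes\omega_Z^{\otimes m}$, and $\mathcal F_m:=f_{*}\mathcal O_X\bigl(m(K_{X/Z}+B)\bigr)$ is a nonzero weakly positive sheaf by the weak positivity theorem for log canonical pairs. Because $Z$ has maximal Albanese dimension it is non-uniruled, so $\kappa(Z)=\kappa\bigl(Z,\omega_Z^{\otimes m}\bigr)\ge 0$ by Kawamata's theorem. Since $L$ is ample over $Z$, a standard computation of Iitaka dimensions along $\rho$, combined with the weak positivity of $\mathcal F_m$ and with $\kappa(Z,\omega_Z^{\otimes m})=\kappa(Z)$, yields $\kappa(T,L)\ge(\dim T-\dim Z)+\kappa(Z)$, which is precisely $\kappa(K_X+B)\ge\kappa(K_F+B_F)+\kappa(Z)$.

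\textbf{(Abundance via generic vanishing.)} Since $K_A=0$, $h_{*}\mathcal O_X\bigl(m(K_X+B)\bigr)=h_{*}\mathcal O_X\bigl(m(K_{X/A}+B)\bigr)=\psi_{*}\mathcal O_T(mL)$, so by the Chen--Jiang type decomposition for direct images of pluri-log-canonical sheaves over an abelian variety — known for klt pairs from the circle of ideas of [\ref{CP}], [\ref{HPS}] and needed here in the log canonical case — this sheaf splits, after an isogeny of $A$ if necessary, as $\bigoplus_i\bigl(p_i^{*}\mathcal G_i\otimes Q_i\bigr)$ with $p_i\colon A\twoheadrightarrow A_i$ surjective homomorphisms of abelian varieties, $\mathcal G_i$ nonzero M-regular coherent sheaves on $A_i$, and $Q_i$ torsion line bundles. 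For each summand the numerical and Iitaka dimensions coincide (they do for an M-regular sheaf on an abelian variety, pullback along a surjective homomorphism leaves both unchanged, and a torsion twist is numerically trivial), and since $L$ is ample over $Z$ and $Z\to A$ is generically finite, the numerical and Iitaka Kodaira dimensions of $L$ on $T$ must agree as well. Hence $K_X+B$ is abundant.

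\textbf{(Main obstacle.)} The crux is the Chen--Jiang decomposition in the log canonical, as opposed to klt, setting: the available proofs of generic vanishing for $a_{*}\omega_V^{\otimes m}$ over abelian varieties rest on Hodge-theoretic input that is cleanest for klt pairs, and extending it to log canonical pairs forces one to handle log canonical centres — for example by passing to a dlt model and an auxiliary finite cover adapted to the boundary, or by invoking more recent generic vanishing statements for log canonical pairs. A secondary, more bookkeeping-type difficulty is that the Albanese map of $Z$ is only generically finite onto its image and can be ramified, so one cannot simply replace $Z$ by its Albanese image: the ramification contribution to $\kappa(Z)$ must be carried along through the twist by $\omega_Z^{\otimes m}$, and one must reconcile the relative Iitaka fibration over $Z$ with the fibration structure of $h$ over $A$, whose general fibres differ. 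The compatibility of the numerical Kodaira dimension with the relative MMP and with the direct-image decomposition is routine given Nakayama's theory.
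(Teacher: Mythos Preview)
Your outline has two genuine gaps, and the paper's proof succeeds precisely by avoiding both through a reduction to the klt case.

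\textbf{Gap 1.} The claim ``since $K_X+B$ is relatively abundant over $Z$, the pair has a good minimal model over $Z$'' is not known for lc pairs. Abundance of $K_F+B_F$ does not by itself yield a good minimal model of $(F,B_F)$ unless one already knows finite generation of $\mathcal R(K_F+B_F)$, and for lc pairs that is open; indeed the paper's Corollary~\ref{m-cor} and Proposition~\ref{m-prop} take the existence of a good minimal model on the fibre as a \emph{strictly stronger} hypothesis than relative abundance. The paper never reduces to $K_X+B$ semi-ample over $Z$. Instead it only passes to the relative Iitaka fibration $g\colon X\to Y$ over $Z$; on the generic fibre $G$ of $g$ one has $\kappa_\sigma(K_G+B_G)=0$ by [\ref{Lehmann}], and then the known abundance in numerical dimension zero ([\ref{Gongyo}], [\ref{Kawamata}]) gives $K_G+B_G\sim_\Q 0$ after a further LMMP$/Y$. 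That is all that is needed to proceed.

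\textbf{Gap 2.} As you yourself flag, the Chen--Jiang decomposition for pushforwards of pluri-log-canonical sheaves of \emph{lc} pairs over abelian varieties is not available, and your abundance argument rests entirely on it. The paper does not attempt to extend generic vanishing to the lc setting. Instead, after the reduction above, it lets $P$ be the vertical-over-$Y$ part of $\lfloor B\rfloor$ and considers $K_X+B-\epsilon P$ for small $\epsilon>0$. Via the canonical bundle formula of [\ref{Hacon-Xu}] (Theorem~\ref{c-bdle-form}) together with the nonvanishing Lemma~\ref{nonvanishing} and Lemma~\ref{lem1}, one checks that both $\kappa$ and $\kappa_\sigma$ are unchanged by this perturbation. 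An LMMP$/Y$ on $K_X+B-\epsilon P$ then terminates by [\ref{B-lc-flips}], and the lc-trivial fibration formula (Theorem~\ref{c-bdle-form-2}) applied to the resulting $(X',B'-\epsilon P')$ produces a generalized \emph{klt} pair $(T,\Delta_T+M_T)$ with abundant moduli part and $K_T+\Delta_T+M_T$ big over $Z$. One can then write $K_T+B_T\sim_\Q K_T+\Delta_T+M_T$ with $(T,B_T)$ honestly klt, and [\ref{BC}, Theorem~1.1] applies directly. The subadditivity inequality falls out of the same computation, so a separate weak-positivity argument is not needed.

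In short, the paper trades your analytic inputs (lc generic vanishing, and an unavailable relative good minimal model) for a purely algebraic reduction to klt via boundary perturbation and two canonical bundle formulae; this is exactly what makes the argument go through with the tools at hand.
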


As an immediate corollary of Theorem \ref{m-thm} we obtain the following.

\begin{cor}\label{m-cor}
Let $f:(X,B) \rightarrow A$ be a morphism from a projective lc pair $(X,B)$ to an abelian variety $A$. Assume that $(X,B)$ relatively has a good minimal model over $A$. Then, $(X,B)$ has a good minimal model. In particular, if $K_X+B$ is semi-ample$/A$, then it is semi-ample.
\end{cor}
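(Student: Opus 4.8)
The plan is to deduce the statement from Theorem \ref{m-thm} via the minimal model program over $A$.

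An abelian variety is a torsor under its own Albanese, so $A$ has maximal Albanese dimension and $\kappa(A)=0$. After replacing $(X,B)$ by a $\Q$-factorial dlt model we run a $(K_X+B)$-MMP over $A$ with scaling of an ample divisor; since $(X,B)$ has a good minimal model over $A$, this program terminates at a relative good minimal model $f'\colon(X',B')\to A$ (by the standard termination of the MMP with scaling in the presence of a good minimal model over the base). Thus $K_{X'}+B'$ is semi-ample over $A$, the induced birational map $X\bir X'$ is $(K_X+B)$-negative, and it suffices to show that $(X',B')$ has a semi-ample log canonical class, for then $(X',B')$ is a good minimal model of $(X,B)$.

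Since $K_{X'}+B'$ is semi-ample over $A$ it is relatively abundant over $A$, so Theorem \ref{m-thm} applies to $f'\colon(X',B')\to A$: $K_{X'}+B'$ is abundant, and moreover $\kappa(K_{X'}+B')\ge\kappa(K_{F'}+B_{F'})+\kappa(A)=\kappa(K_{F'}+B_{F'})$, where $F'$ is a general fibre of $f'$. Restricting the relative good minimal model over a general point of $A$ gives a good minimal model of $(F',B_{F'})$, so $\kappa(K_{F'}+B_{F'})=\nu(K_{F'}+B_{F'})\ge 0$; hence $K_{X'}+B'$ is $\Q$-effective.

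It remains to upgrade relative semi-ampleness over $A$ to absolute semi-ampleness. Let $g\colon X'\to T$ be the ample model of $K_{X'}+B'$ over $A$: then $g$ has connected fibres, $T$ is normal with structure morphism $h\colon T\to A$, and $K_{X'}+B'\sim_{\Q}g^*D$ with $D$ ample over $A$. Consequently $D$ is $\Q$-effective, $h$-ample, and abundant on $T$ (its Iitaka and numerical dimensions agree with those of $K_{X'}+B'$), with $A$ abelian. The crux, and the step I expect to be the main obstacle, is to conclude that $D$ is semi-ample: here one genuinely uses that the base is abelian, combining the abundance of $D$ with its relative ampleness over $A$ — note that $D+h^*N$ is ample for $N$ an ample divisor on $A$, so one is propagating semi-ampleness from a big, semi-ample perturbation down to $D$ itself using abundance — whether through the structure theory of varieties mapping to abelian varieties or through an argument parallel to the proof of Theorem \ref{m-thm}. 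Granting this, $K_{X'}+B'=g^*D$ is semi-ample, which completes the proof of the first assertion. For the final statement, if $K_X+B$ is already semi-ample over $A$ no MMP is needed: one takes $X'=X$, and the last step shows directly that $K_X+B$ is semi-ample.
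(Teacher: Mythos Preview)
Your reduction to a relative good minimal model and the application of Theorem \ref{m-thm} to get global abundance of $K_{X'}+B'$ are fine, and this matches the first half of the paper's argument. The gap is exactly where you flag it: you have $K_{X'}+B'$ nef and abundant (equivalently, $D$ nef, abundant, and ample$/A$ on $T$), and you want semi-ampleness. For a \emph{klt} pair this would follow from [\ref{BC}], but $(X',B')$ is only lc, and there is no general statement of the form ``nef and abundant implies semi-ample'' for lc pairs, nor any reason why the divisor $D$ on the normal variety $T$ should inherit a klt polarized structure. Your suggested perturbation $D+h^*N$ being ample does not help: passing from a big semi-ample perturbation back to $D$ is precisely the abundance problem you are trying to solve, and ``structure theory of varieties over abelian varieties'' is not a substitute for an actual argument here.

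The paper closes this gap with a different idea: after observing that $K_{X'}+B'$ is globally nef (since $A$ contains no rational curves, every $(K_{X'}+B')$-negative extremal ray lies over a point of $A$, and there are none by relative nefness) and abundant by Theorem \ref{m-thm}, it repeats the \emph{same} argument on every lc center $S$ of $(X',B')$. Each such $S$ still maps to the abelian variety $A$, $(K_{X'}+B')|_S$ is semi-ample$/A$ by adjunction and the relative semi-ampleness, and Theorem \ref{m-thm} again gives abundance of $(K_{X'}+B')|_S$. Thus $K_{X'}+B'$ is nef and \emph{log abundant}, and one concludes semi-ampleness from [\ref{Fujino-Gongyo}, Theorem 1.5] together with [\ref{Hacon-Xu-2}]. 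This inductive use of the main theorem on lc centers, feeding into the log-abundance criterion, is the missing ingredient in your outline.
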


This is a generalization of [\ref{BC}, Theorem 1.1]. Here we sketch the main strategy to prove Theorem \ref{m-thm}. Since Birkar and Chen [\ref{BC}] already proved case when $(X,B)$ is klt such that $K_X+B$ is big$/Z$, we simply need to reduce to this easier case. To this end we apply a canonical bundle formula to get a generalized lc generalized pair $(Y,B_Y+M_Y)$ (see Section \ref{preliminaries} Preliminaries for definitions, [\ref{BZ}] for more details) such that $K_Y+B_Y+M_Y$ is big$/Z$. Thanks to the assumption $K_X+B$ being relatively abundant$/Z$, we deduce that $M_Y$ is nef and abundant by a perturbation of the coefficients of $B$. Again we modify the coefficients of $B_Y$ to get a desired klt pair $(Y,B_Y+M_Y)$. 

We also give a generalization of [\ref{Lai}, Theorem 4.2 and Corollary 4.3] as below.

\begin{cor}\label{s-cor}
Let $f:(X,B) \rightarrow A$ be the Albanese map from a projective klt pair $(X,B)$ to its Albanese variety $A=\mathrm{Alb}(X)$. Suppose that $(F,B_F)$ has a good minimal model where $F$ is the general fiber and $K_F+B_F=(K_X+B)|_F$. Then, $(X,B)$ has a good minimal model.
\end{cor}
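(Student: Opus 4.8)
The plan is to produce a good minimal model of $(X,B)$ \emph{over} $A$ and then feed it into Corollary \ref{m-cor}. The hypothesis of the statement is exactly that the general fiber $(F,B_F)$ of $f$ has a good minimal model. Applying Lai's theorem on varieties fibered by good minimal models [\ref{Lai}] to the projective morphism $f$ and the klt pair $(X,B)$ (replacing $f$ by its Stein factorization, or by the morphism onto its image, if one prefers; a good minimal model over $A$ is the same as one over the image of $f$), one concludes that $(X,B)$ has a good minimal model over $A=\mathrm{Alb}(X)$. Note that the Albanese map need not be surjective, but this is harmless since Corollary \ref{m-cor} is phrased for an arbitrary, not necessarily surjective, morphism to an abelian variety.

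With the relative good minimal model in hand, the conclusion is immediate: $f:(X,B)\to A$ is a morphism to an abelian variety and $(X,B)$ relatively has a good minimal model over $A$, so Corollary \ref{m-cor} gives that $(X,B)$ has a good minimal model. Thus Corollary \ref{s-cor} is obtained by substituting Lai's relative statement into Corollary \ref{m-cor}; the content beyond [\ref{Lai}, Theorem 4.2 and Corollary 4.3] is precisely the passage from a relative good minimal model to an absolute one, which is carried by Corollary \ref{m-cor}, and ultimately by the abundance of $K_X+B$ established in Theorem \ref{m-thm}, where the maximal Albanese dimension of the base is used.

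Since the assembly above is essentially formal, the real weight of the argument sits in the two imported ingredients — Lai's theorem (itself proved by running a relative MMP with scaling and establishing its termination from the existence of a good minimal model on the general fiber) and Corollary \ref{m-cor}/Theorem \ref{m-thm}. The only genuine care needed here is the bookkeeping around non-surjectivity (and possible disconnectedness) of the Albanese fibration, and the observation that if $K_F+B_F$ is not pseudo-effective the statement is vacuous (or to be read through a Mori fiber space), while otherwise $K_X+B$ is pseudo-effective — a fact guaranteed a posteriori by the relative good minimal model produced in the first step.
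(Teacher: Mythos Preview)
Your approach is correct but takes a different route from the paper's. You first invoke Lai's relative result [\ref{Lai}] to produce a good minimal model of $(X,B)$ over $A$, and then upgrade it to a global one via Corollary~\ref{m-cor}. The paper instead deduces Corollary~\ref{s-cor} as the klt special case of Proposition~\ref{m-prop'}: from the existence of a good minimal model on the general fiber it extracts only the relative \emph{abundance} of $K_X+B$ over (the image in) $A$, applies Theorem~\ref{m-thm} directly to obtain global abundance, and then finishes via finite generation of $\mathcal{R}(K_X+B)$ (by [\ref{BCHM}] in the klt case), [\ref{Lehmann}, Proposition~6.4], and [\ref{BH-I}, Theorem~1.1]. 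In particular the paper never constructs a relative good minimal model and does not use Corollary~\ref{m-cor} here. Your route is more modular and quicker, but it relies on the full strength of Lai's relative theorem for klt pairs; the paper's route is more self-contained within its own framework and, more to the point, yields the stronger Proposition~\ref{m-prop'} for lc pairs with finitely generated canonical ring, a generality the klt-only input from [\ref{Lai}] does not immediately provide.
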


\begin{cor}\label{s-cor-2}
	Let $f:(X,B) \rightarrow Z$ be a surjective morphism from a projective klt pair $(X,B)$ to a normal variety $Z$. Suppose that $Z$ has maximal Albanese dimension, and that $(F,B_F)$ has a good minimal model where $F$ is the general fiber and $K_F+B_F=(K_X+B)|_F$. Then, $(X,B)$ has a good minimal model.
\end{cor}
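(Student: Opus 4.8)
The plan is to reduce the statement to the case of an abelian base, which is Corollary \ref{m-cor}. The hypothesis on the general fibre, fed into the relative minimal model theory, will produce a good minimal model over the base, and maximal Albanese dimension of $Z$ is exactly what allows us to transport the whole picture onto an abelian variety without enlarging the fibres.

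First I would arrange that $Z$ is smooth and that $f$ maps onto a subvariety of an abelian variety. Fix a resolution $\tilde Z \to Z$ and let $a\colon \tilde Z \to A := \mathrm{Alb}(\tilde Z) = \mathrm{Alb}(Z)$ be the Albanese morphism; since $Z$ has maximal Albanese dimension, $a$ is generically finite onto its image $a(\tilde Z)$. Let $\pi\colon \tilde X \to X$ be a resolution dominating the main component of $X \times_Z \tilde Z$, so there is a morphism $g\colon \tilde X \to \tilde Z$, and write $K_{\tilde X} + B_{\tilde X} = \pi^*(K_X + B) + E$ in the usual way, with $(\tilde X, B_{\tilde X})$ projective klt and $E \ge 0$ exceptional over $X$. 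Since $(\tilde X, B_{\tilde X})$ has a good minimal model if and only if $(X,B)$ does, I may replace $(X,B)$ by $(\tilde X, B_{\tilde X})$ and work with the morphism $h := a \circ g \colon \tilde X \to A$.

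Next I would identify the general fibre of $h$. Passing to the Stein factorisation $\tilde X \to W \to A$ of $h$ (so that $W \to A$ is finite onto $a(\tilde Z)$), a general fibre $G$ of $\tilde X \to W$ is connected, and it is contracted by $g$: its image in $\tilde Z$ lies in a fibre of $a$ over a general point of $a(\tilde Z)$, which is a finite set. Hence $G$ sits inside a fibre of $g$, and comparing dimensions — this is where maximal Albanese dimension is used, through $\dim a(\tilde Z) = \dim \tilde Z$ — shows that $G$ is a component of a general fibre of $g$, so $G$ is birational to a general fibre $F$ of $f$, with $K_G + B_{\tilde X}|_G$ obtained from $K_F + B_F$ by pulling back along $G \to F$ and adding an effective exceptional divisor. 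By hypothesis $(F, B_F)$, and therefore $(G, B_{\tilde X}|_G)$, has a good minimal model. Now by the relative minimal model theory for fibrations whose general fibre has a good minimal model (cf.\ [\ref{Lai}]), $(\tilde X, B_{\tilde X})$ has a good minimal model over $W$, equivalently over $A$ since $W \to A$ is finite. Finally, Corollary \ref{m-cor} applied to $h\colon (\tilde X, B_{\tilde X}) \to A$ gives that $(\tilde X, B_{\tilde X})$ has a good minimal model, and hence so does $(X,B)$.

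I expect the only genuine obstacle to be the input that a projective klt fibration whose general fibre has a good minimal model admits a good minimal model over the base; the rest is bookkeeping with resolutions and the Albanese diagram. One could instead reduce to Corollary \ref{s-cor} by first replacing $(X,B)$ with its relative good minimal model over $Z$, so that the restrictions $K_F + B_F$, and then $K_G + B_G$ for $G$ a general fibre of the Albanese map of $X$, become semi-ample; but this requires knowing the Albanese map of $X$ is a morphism and is otherwise the same argument.
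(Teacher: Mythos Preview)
Your argument is correct, but it follows a different route from the paper's. The paper deduces this corollary from the stronger Proposition~\ref{m-prop'}: since $(F,B_F)$ has a good minimal model, $K_F+B_F$ is abundant, so $K_X+B$ is relatively abundant over $Z$, and Theorem~\ref{m-thm} gives that $K_X+B$ is globally abundant. In the klt case the canonical ring $\mathcal{R}(K_X+B)$ is finitely generated by [\ref{BCHM}], and abundance together with finite generation forces the positive part of the Nakayama--Zariski decomposition to be semi-ample (via [\ref{Lehmann}]), whence a good minimal model by [\ref{BH-I}].

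You instead first manufacture a good minimal model \emph{over} the Albanese variety $A$ --- using the relative theorem of Lai/Hacon--Xu that a klt fibration whose general fibre has a good minimal model has one over the base --- and then invoke Corollary~\ref{m-cor} to globalise. This is perfectly valid: the identification of the general fibre of $\tilde X \to W$ with that of $f$ via the generically-finite Albanese map is exactly the maximal-Albanese-dimension input, and the black box you flag is indeed available in the literature. The trade-off is that your route hides the main theorem inside Corollary~\ref{m-cor} and imports the relative good-minimal-model machinery as an extra ingredient, whereas the paper's route applies Theorem~\ref{m-thm} directly and avoids Lai's result entirely, replacing it by the (short) chain abundance $+$ finite generation $\Rightarrow$ good minimal model. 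Your approach is more geometric; the paper's makes clearer that abundance is doing all the work.
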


In fact one can prove a stronger version on lc pairs instead of klt pairs as follows.

\begin{prop}[=Proposition \ref{m-prop'}]\label{m-prop}
	Let $f:(X,B) \rightarrow Z$ be a surjective morphism from a projective lc pair $(X,B)$ to a normal variety $Z$. Suppose that $Z$ has maximal Albanese dimension, and that $(F,B_F)$ has a good minimal model where $F$ is the general fiber and $K_F+B_F=(K_X+B)|_F$. Then, $K_X+B$ is abundant. Moreover, if the canonical ring $\mathcal{R}(K_X+B)$ is finitely generated, for example, $(X,B)$ is klt, or $\kappa(K_X+B)=0$, then $(X,B)$ has a good minimal model. 
\end{prop}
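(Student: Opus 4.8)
The plan is to deduce everything from Theorem \ref{m-thm}. The crucial observation is that the hypothesis on the general fiber already forces relative abundance of $K_X+B$ over $Z$. Indeed, since $(F,B_F)$ has a good minimal model, $K_F+B_F$ is semi-ample on that model, hence abundant there; and since both the Iitaka dimension and the Nakayama numerical dimension are invariant under crepant birational maps, $K_F+B_F$ is abundant on $F$ itself. The relative Iitaka dimension and the relative numerical dimension of $K_X+B$ over $Z$ are computed on a general fiber, that is $\kappa(K_X+B/Z)=\kappa(K_F+B_F)$ and $\kappa_\sigma(K_X+B/Z)=\kappa_\sigma(K_F+B_F)$, so the abundance of $K_F+B_F$ gives $\kappa(K_X+B/Z)=\kappa_\sigma(K_X+B/Z)$; that is, $K_X+B$ is abundant$/Z$. (Equivalently, after replacing $(X,B)$ by a log resolution and $Z$ by a suitable birational model so that $f$ is well behaved, this is immediate from the definition of relative abundance.) Theorem \ref{m-thm} then applies and yields both that $K_X+B$ is abundant and that $\kappa(K_X+B)\ge \kappa(K_F+B_F)+\kappa(Z)$.

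For the ``moreover'' part one first checks that $\kappa(K_X+B)\ge 0$: in the subadditivity inequality just obtained, $\kappa(K_F+B_F)\ge 0$ because $K_F+B_F$ is semi-ample on a minimal model, and $\kappa(Z)\ge 0$ because a smooth projective variety of maximal Albanese dimension has non-negative Kodaira dimension. Thus $K_X+B$ is pseudo-effective, in fact effective. Granting the abundance of $K_X+B$ together with the finite generation of $\mathcal R(K_X+B)$, the existence of a good minimal model follows by standard minimal model theory. For klt $(X,B)$, $\mathcal R(K_X+B)$ is finitely generated by BCHM, a $(K_X+B)$-MMP with scaling terminates with a minimal model $(X',B')$ since $K_X+B$ is pseudo-effective, the adjoint divisor $K_{X'}+B'$ is nef and abundant because $\kappa$ and $\kappa_\sigma$ are preserved by the MMP, and a nef and abundant adjoint divisor on a klt pair is semi-ample. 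When $\kappa(K_X+B)=0$, $\mathcal R(K_X+B)$ is finitely generated for trivial reasons and abundance gives $\kappa_\sigma(K_X+B)=0$, so a good minimal model exists by the known results on log canonical pairs with vanishing numerical dimension. For general lc $(X,B)$ with $\mathcal R(K_X+B)$ finitely generated, one passes to the log canonical model $\Proj\mathcal R(K_X+B)$ and uses it, together with the abundance of $K_X+B$, to construct the good minimal model.

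The genuine content here is Theorem \ref{m-thm}, which we are allowed to invoke; the reduction to it in the first paragraph is routine. The only delicate step internal to the present proof is the last implication --- abundance of $K_X+B$ plus finite generation of $\mathcal R(K_X+B)$ yields a good minimal model --- when $(X,B)$ is merely log canonical, since one cannot run the MMP freely in that generality. In the klt and $\kappa(K_X+B)=0$ cases, however, this is covered by the standard machinery recalled above, which is exactly why those cases are singled out in the statement.
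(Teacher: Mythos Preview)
Your first paragraph matches the paper's argument: from the good minimal model on the general fiber you get abundance of $K_F+B_F$, hence relative abundance of $K_X+B$ over $Z$, and Theorem \ref{m-thm} finishes. (The paper phrases the passage from general to generic fiber via semi-continuity, but this is the same content.)

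For the ``moreover'' part your route diverges from the paper's, and the paper's is both cleaner and more complete. Rather than splitting into the klt, $\kappa=0$, and general lc cases, the paper gives a single uniform argument valid for any lc pair with $\mathcal R(K_X+B)$ finitely generated: by [\ref{Lehmann}, Proposition 6.4], abundance together with finite generation forces $K_X+B$ to birationally admit a Nakayama--Zariski decomposition with \emph{semi-ample} positive part; then [\ref{BH-I}, Theorem 1.1] produces a (good) log minimal model directly. This handles the general lc case you flagged as ``delicate'' without any appeal to the log canonical model or to running an unrestricted MMP.

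Your case-by-case treatment also has a small slip in the klt case: the assertion that ``a $(K_X+B)$-MMP with scaling terminates with a minimal model since $K_X+B$ is pseudo-effective'' is not known in general for klt pairs; termination needs an extra input (bigness, or precisely the kind of Nakayama--Zariski/abundance criterion the paper invokes). The conclusion is still correct, but the justification should go through [\ref{Lehmann}] and [\ref{BH-I}] (or an equivalent result such as those in [\ref{Lai}] or [\ref{Gongyo-Lehmann}]) rather than bare pseudo-effectivity. Your general lc sentence (``one passes to the log canonical model \ldots\ to construct the good minimal model'') is too vague to stand on its own; the paper's two-line citation chain is exactly what is needed to make it precise.
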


\textbf{Generalized polarized pairs and weak nonvanishing.}
It is known that minimal model theory works for generalized lc generalized pairs, although the abundance and the nonvanishing are not expected in general. However, there are interesting cases that abundance holds.

\begin{prop}[=Proposition \ref{prop2}]
	Let $(X,B+M)$ be a generalized lc generalized pair with data $X' \to X$ and $M'$, and $f:(X,B+M) \rightarrow Z$ be a surjective morphism to a normal variety $Z$. Assume that 
	
	$\bullet$ $Z$ has maximal Albanese dimension,
	
	$\bullet$ $K_X+B+M$ is abundant$/Z$,
	
	$\bullet$ $M'$ is abundant, and
	
	$\bullet$ every lc center of $(X,B+M)$ is vertical$/Z$. 
	
	Then, $K_X+B+M$ is abundant. Moreover, the subadditvity of Kodaira dimensions $\kappa(K_X+B+M) \geq \kappa(K_F+B_F+M_F)+ \kappa(Z)$ holds, where $F$ is a general fiber, $K_F+B_F+M_F= (K_X+B+M)|_F$, and $\kappa(Z)$ means the Kodaira dimension of a smooth model of $Z$. 
\end{prop}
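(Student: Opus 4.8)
The plan is to reduce the statement to the non-generalized case handled by Theorem~\ref{m-thm}, and hence ultimately to the relatively big klt case of Birkar--Chen~[\ref{BC}]. First I would carry out the usual reductions: taking a log resolution of $(X,B+M)$ and of its data, and replacing $Z$ by a resolution, we may assume that $X$ and $Z$ are smooth projective, that $f$ and the data morphism $X'\to X$ are morphisms with $f$ having connected fibres, and that the nef part $M'$ lives on $X$ itself. If $K_X+B+M$ fails to be pseudo-effective, then its relative and absolute Kodaira dimensions are $-\infty$ and there is nothing to prove, so we assume pseudo-effectivity. Since every lc centre of $(X,B+M)$ is vertical$/Z$, the pair is generalized klt over the generic point of $Z$; in particular $(F,B_F+M_F)$ is generalized klt.

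Next I would reduce to the case where $K_X+B+M$ is big$/Z$. Let $X\dashrightarrow V$ over $Z$ be the relative Iitaka fibration of $K_X+B+M$, which after resolving we take to be a morphism of smooth projective varieties, and apply the canonical bundle formula for generalized pairs~[\ref{BZ}] to $X\to V$. This produces a generalized lc generalized pair $(V,B_V+M_V)$ with nef part $M_V'$ such that $K_X+B+M\sim_{\Q}$ the pullback of $K_V+B_V+M_V$; by construction $K_V+B_V+M_V$ is big$/Z$, the relative and absolute Kodaira and numerical dimensions are unchanged, and every lc centre of $(V,B_V+M_V)$ is again vertical$/Z$, since the discriminant of the formula is supported over the images of the lc centres of $(X,B+M)$ and of the non-klt fibres of $X\to V$, all of which lie over proper closed subsets of $Z$. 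Crucially, using that $M'$ is abundant together with the semipositivity of the Hodge-theoretic part of the formula, one checks that $M_V'$ is again nef and abundant. Replacing $(X,B+M),f$ by $(V,B_V+M_V),V\to Z$, we may thus assume $K_X+B+M$ is big$/Z$.

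Now assume $K_X+B+M$ is big$/Z$. Running a relative minimal model program for generalized lc pairs, which in the relatively big case terminates with a relatively good minimal model, we may assume $K_X+B+M$ is semi-ample$/Z$, so that the induced relative ample model $g:X\to S$ over $Z$ has $S\to Z$ generically finite and $S$ inherits maximal Albanese dimension from $Z$. Using that $M'$ is nef and abundant, after a further birational modification the numerical Iitaka fibration of $M'$ becomes a morphism and $M'$ is $\Q$-linearly equivalent to the pullback of a big and nef $\Q$-divisor; writing that divisor as an ample plus an effective one, choosing a general member of the associated basepoint-free system, and perturbing the coefficients of $B$---which causes no harm because $K_X+B+M$ is big$/Z$ and thus leaves room to absorb an ample summand without creating horizontal$/Z$ lc centres---one arrives at an honest projective lc pair $(X_1,B_1)$ with $K_{X_1}+B_1\sim_{\Q}$ the pullback of $K_X+B+M$ and $K_{X_1}+B_1$ abundant$/Z$, over the variety $Z$ of maximal Albanese dimension. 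Theorem~\ref{m-thm} applies to $(X_1,B_1)\to Z$ and yields both the abundance of $K_{X_1}+B_1$ and the subadditivity inequality over $Z$. Both transfer back: $K_X+B+M$ has the same Kodaira and numerical dimensions as $K_{X_1}+B_1$, and the Kodaira dimension of the general fibre is likewise preserved, since the fibre of $X_1\to Z$ is a model of the relative Iitaka fibration of the fibre of $X\to Z$. This gives the abundance of $K_X+B+M$ and the inequality $\kappa(K_X+B+M)\geq\kappa(K_F+B_F+M_F)+\kappa(Z)$.

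I expect the main obstacle to lie in the second and third steps. Verifying that the moduli part $M_V'$ produced by the canonical bundle formula is genuinely abundant---not merely nef---requires combining the abundance of $M'$ with the positivity properties of the Hodge-theoretic part and the relative abundance of $K_X+B+M$, and is delicate. Equally delicate is the conversion of the resulting generalized pair into an honest log canonical pair without destroying log canonicity or creating horizontal$/Z$ lc centres; this is precisely where the hypotheses that $M'$ is abundant and that every lc centre is vertical$/Z$ are used in an essential way.
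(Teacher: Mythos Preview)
Your route works in spirit but inserts an obstacle that the paper simply sidesteps, and the obstacle you flag in step~2 is real: after applying a canonical bundle formula for the generalized pair $X\to V$, there is no clean reason why the new nef part $M_V'$ should remain abundant. The Hodge-theoretic part of the formula is only known to be nef and abundant for ordinary lc-trivial fibrations (Ambro, Fujino--Gongyo), and combining it with the pushdown of an abundant $M'$ does not obviously yield an abundant divisor. So your plan, as written, has a gap exactly where you suspect.

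The paper's proof avoids this entirely by reversing the order of operations: it converts to an honest \emph{klt} pair at the very start, before any Iitaka fibration or canonical bundle formula is taken. After passing to a $\Q$-factorial dlt polarized model, the hypothesis that every lc centre is vertical$/Z$ means $P:=\lfloor B\rfloor$ is vertical$/Z$, so subtracting $\epsilon P$ does not change the restriction to a general fibre of $f$. Since $M$ is nef and abundant on $X$ itself, one can choose an effective divisor $\Q$-linearly equivalent to $M$ general enough that $(X,\Delta)$ is klt with $K_X+\Delta\sim_\Q K_X+B-\epsilon P+M$. Because $P$ is vertical$/Z$, the relative Kodaira dimension over $Z$ is unchanged, so $K_X+\Delta$ is still abundant$/Z$; Lemma~\ref{nonvanishing} then gives $\kappa(K_X+\Delta)\ge 0$, and Lemma~\ref{lem1} gives $\kappa(K_X+\Delta)=\kappa(K_X+B+M)$ and $\kappa_\sigma(K_X+\Delta)=\kappa_\sigma(K_X+B+M)$. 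Now $(X,\Delta)\to Z$ is an ordinary klt pair satisfying the hypotheses of Theorem~\ref{m-thm}, and one finishes exactly as there (run an LMMP$/Y$ over the relative Iitaka fibration, apply the canonical bundle formula to land on a klt pair $(T,\Delta_T)$ with $K_T+\Delta_T$ big$/Z$, and invoke [\ref{BC}]). The point is that the two hypotheses you use only at the end---abundant $M'$ and vertical lc centres---are precisely what allow the immediate reduction to a klt pair, making the intermediate canonical bundle formula on a generalized pair unnecessary.
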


It is not known if the weak nonvanishing holds for generalized pairs. Most probably the answer is negative. But there are some cases in which the answer is positive.

\begin{thm}[=Theorem \ref{weaknonvanishing}]
    Let $(X,B+M)$ be a generalized lc generalized pair with data $X' \to X$ and $M'$, and $f:(X,B+M) \rightarrow Z$ be a surjective morphism to a normal variety $Z$.  Assume that 

    $\bullet$ $Z$ has maximal Albanese dimension,

    $\bullet$ $K_X+B+M$ is abundant$/Z$, 

    $\bullet$ $\mathcal{R}(X/Z,K_X+B+M)$ is a finitely generated $\mathcal{O}_Z$-algebra, and

     $\bullet$  $M'$ is semi-ample$/Z$. 

Then, there exists an effective divisor $D \ge 0$ such that $D \equiv K_X+B+M$.
\end{thm}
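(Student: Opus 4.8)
The plan is to reduce, via the finite generation hypothesis and the canonical bundle formula, to an instance of generic vanishing on an abelian variety. First, since $\mathcal{R}(X/Z,K_X+B+M)$ is a finitely generated $\mathcal{O}_Z$-algebra, the relative ample model of $K_X+B+M$ over $Z$ exists; after replacing $X$ by a suitable log resolution we obtain a contraction $g\colon X\to T$ over $Z$ with structure morphism $h\colon T\to Z$, an ample$/Z$ $\Q$-divisor $\mathcal{A}$ on $T$, and an effective $\Q$-divisor $E$ with $K_X+B+M\sim_{\Q,Z} g^{*}\mathcal{A}+E$. Relative abundance of $K_X+B+M$ over $Z$ forces the restriction of $K_X+B+M$ to a general fibre of $g$ to be numerically trivial — hence, by abundance for numerically trivial generalized lc pairs, $\Q$-trivial up to $E$, which must then be vertical over $T$ — so that, after absorbing $E$ into the boundary, $g$ is an lc-trivial fibration in the sense required by the canonical bundle formula. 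Since $E\ge 0$ and the desired conclusion as well as numerical equivalence are preserved under crepant birational modifications, it suffices to produce an effective divisor numerically equivalent to $\mathcal{A}$ on $T$.

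Next, apply the canonical bundle formula for generalized pairs (see [\ref{BZ}]) to $g\colon (X,B+M)\to T$: this yields a generalized lc pair $(T,\Delta_T+N_T)$ with $K_T+\Delta_T+N_T\sim_{\Q,Z}\mathcal{A}$, in particular big$/Z$, where $\Delta_T$ is the discriminant part and $N_T$ the moduli part. The hypothesis that $M'$ is semi-ample$/Z$ enters precisely here: it ensures that the contribution of the nef part $M$ to the formula is semi-ample$/Z$, and (together with the b-semi-ampleness of the classical moduli part, or after a further birational modification of $T$) that $N_T$ may be represented by a nef — indeed semi-ample$/Z$ — divisor, so that $(T,\Delta_T+N_T)$ is a genuine generalized lc pair with a well-controlled nef part, trivial over the general point of $Z$.

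Now invoke maximal Albanese dimension. Replacing $Z$ by a smooth model, the Albanese morphism $a\colon Z\to A:=\mathrm{Alb}(Z)$ is generically finite onto its image; pulling everything back and using the standard reduction (base change, resolution, invariance under crepant modifications) as in [\ref{BC}], we may assume we have a morphism $\pi\colon T\to A$ with $K_T+\Delta_T+N_T$ big over the base and $N_T$ nef. Because $K_T+\Delta_T+N_T$ is big$/Z$ and $Z\to A$ is generically finite, its restriction to a general fibre of $\pi$ is big, so for $m$ sufficiently divisible the sheaf $\mathcal{F}:=\pi_{*}\mathcal{O}_T(\lceil m(K_T+\Delta_T+N_T)\rceil)$ is nonzero. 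Writing $m(K_T+\Delta_T+N_T)=K_T+\Delta+L$ with $(T,\Delta)$ klt (perturbing the boundary, or using the lc version directly) and $L$ semi-ample$/A$ up to the nef part $N_T$, the generic vanishing theorem for such pairs over abelian varieties (in the circle of ideas of [\ref{HPS}], [\ref{CP}]) shows that $\mathcal{F}$ is a GV-sheaf on $A$. A nonzero GV-sheaf satisfies $V^{0}(\mathcal{F})=\{P\in\mathrm{Pic}^{0}(A):H^{0}(A,\mathcal{F}\otimes P)\neq 0\}\ne\emptyset$, so choosing such a $P$ we get an effective $\Q$-divisor $D_T\sim_{\Q}K_T+\Delta_T+N_T+\pi^{*}P$; since $\pi^{*}P\equiv 0$ this gives $D_T\equiv \mathcal{A}$, and pulling $D_T$ back by $g$ and adding $E$ yields an effective divisor numerically equivalent to $K_X+B+M$. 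This also explains why the statement is phrased with $\equiv$ and not $\sim_{\Q}$: the twist $\pi^{*}P$ need not be torsion on $A$.

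The main obstacle is the combination of the second and third steps: one must control the moduli part $N_T$ of the generalized canonical bundle formula — arranging it to be nef, even semi-ample$/Z$, on a suitable model — and then have at hand a generic vanishing statement for generalized lc pairs with such a nef part over an abelian variety that applies to $\pi_{*}\mathcal{O}_T(\lceil m(K_T+\Delta_T+N_T)\rceil)$. The semi-ampleness of $M'$ over $Z$ is exactly the input that makes this possible; once the GV-property is established, producing the desired effective divisor is formal. The reduction to an abelian base, while routine, also requires care to preserve finite generation, relative abundance, and the semi-ampleness of $M'$.
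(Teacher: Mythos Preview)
Your overall architecture --- pass to the base of the relative Iitaka fibration, then apply a GV-sheaf argument over the Albanese variety to find a torsion twist with a section --- matches the paper's. But there is a genuine gap in your second and third steps, precisely where you yourself flag the difficulty. You appeal to a ``canonical bundle formula for generalized pairs'' and then to ``generic vanishing for generalized lc pairs with such a nef part'', writing $m(K_T+\Delta_T+N_T)=K_T+\Delta+L$ with $(T,\Delta)$ klt by ``perturbing the boundary, or using the lc version directly''. None of this is available in the form you need: the moduli part of a generalized canonical bundle formula is not known to be b-semi-ample (even relatively), and more importantly, if $(T,\Delta_T+N_T)$ is only generalized lc with possibly horizontal lc centres, you cannot simply perturb to klt without destroying the numerical class. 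Nor is there an off-the-shelf Kawamata--Viehweg or GV statement for generalized lc pairs; the vanishing you need requires an honest klt boundary plus a nef and big twist.

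The paper resolves this not by invoking a generalized canonical bundle formula, but by first running several LMMPs (using Lemma~\ref{lc-m-g-model}, which relies on finite generation plus abundance) to arrange $K_X+B+M\sim_{\Q}r^{*}D_Y$ with $D_Y$ ample$/Z$, and to remove the vertical part of $\lfloor B\rfloor$. The crucial use of the hypothesis that $M'$ is semi-ample$/Z$ is then the following: writing $M'\sim_{\Q}q^{*}M_T$ for some $M_T$ ample$/Z$, one adds a small pullback $\delta p^{*}H$ of an ample from $A=\mathrm{Alb}(Z)$ so that $M_T+\delta t^{*}H$ becomes globally nef and abundant, and carefully constructs an \emph{effective} divisor $L$ with $(X,B+L)$ dlt, $K_X+B+L\sim_{\Q}K_X+B+M+\delta p^{*}H$, and such that the restriction to any minimal lc centre is klt. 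One then applies the \emph{ordinary} canonical bundle formula (Theorem~\ref{c-bdle-form-2}) to obtain a genuine klt pair $(Y,\Delta_Y)$ with $K_Y+\Delta_Y\sim_{\Q}D_Y+\delta g^{*}H$ big$/A$; an LMMP$/A$ (which terminates by [\ref{BCHM}] and yields a globally nef model since $A$ contains no rational curves) then puts you in the Kawamata--Viehweg regime, and the GV argument goes through. The point is that the semi-ampleness of $M'/Z$ is used not to control the moduli part of some generalized formula, but to replace $M$ by an effective divisor in a way that makes the base pair klt.
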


Since the relative abundance assumption is automatically satisfied when a fibration is relatively of Fano type, we immediately obtain the corollary below.

\begin{cor}[=Corollary \ref{nonvanishing-cor-2}]
	Let $(X,B+M)$ be a generalized lc generalized pair with data $X' \to X$ and $M'$, and $f:(X,B+M) \rightarrow Z$ be a surjective morphism to a normal variety $Z$. Assume that 

$\bullet$ $Z$ has maximal Albanese dimension,

$\bullet$ $\kappa(K_F+B_F+M_F) \ge 0$ where $F$ is a general fiber of $f$, and

$\bullet$ $X$ is of log Fano type over $Z$, that is, there is a boundary $\Delta$ such that $(X,\Delta)$ is lc and $-(K_X+\Delta)$ is ample$/Z$.

Then, there exists an effective divisor $D \ge 0$ such that $D \equiv K_X+B+M$.
\end{cor}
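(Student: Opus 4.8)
The plan is to deduce Corollary~\ref{nonvanishing-cor-2} from Theorem~\ref{weaknonvanishing}, applied to a suitably normalized crepant model of $(X,B+M)\to Z$. The maximal Albanese dimension hypothesis is given, so the task is to extract, from the log Fano type assumption together with $\kappa(K_F+B_F+M_F)\ge 0$, a generalized lc generalized pair, crepant to $(X,B+M)$ over $Z$, whose log canonical divisor is abundant$/Z$, whose relative canonical ring is a finitely generated $\mathcal{O}_Z$-algebra, and whose nef part is semi-ample$/Z$; then transport the resulting effective divisor back to $X$.

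First I record what log Fano type over $Z$ provides. After a standard perturbation (log Fano type and Fano type coincide) we may assume $(X,\Delta)$ is klt with $-(K_X+\Delta)$ ample$/Z$, and after a small $\Q$-factorialization that $X$ is $\Q$-factorial; thus $X$ is a relative Mori dream space over $Z$, so the MMP for generalized lc pairs over $Z$ with scaling terminates, every relatively nef $\Q$-Cartier divisor on $X$ is semi-ample$/Z$, and the relevant relative finite generation statements hold. Since $\kappa(K_F+B_F+M_F)\ge 0$, the divisor $K_X+B+M$ is relatively effective up to $\R$-linear equivalence, hence pseudo-effective$/Z$; running the $(K_X+B+M)$-MMP$/Z$ with scaling we reach a relative minimal model $(X_m,B_m+M_m)\to Z$ on which $K_{X_m}+B_m+M_m$ is nef$/Z$ and, $X_m$ being again a relative Mori dream space, semi-ample$/Z$. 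Consequently $K_X+B+M$ is abundant$/Z$ and $\mathcal{R}(X/Z,K_X+B+M)\cong\mathcal{R}(X_m/Z,K_{X_m}+B_m+M_m)$ is a finitely generated $\mathcal{O}_Z$-algebra, so two of the three remaining hypotheses of Theorem~\ref{weaknonvanishing} already hold, both for $(X,B+M)$ and for its crepant model $(X_m,B_m+M_m)$.

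It then remains to arrange the nef part to be relatively semi-ample. The model $X_m$ is $\Q$-factorial and of Fano type over $Z$, since Fano type is preserved by the MMP, hence once more a relative Mori dream space. I pass to the generalized pair on $X_m$ with nef part the push-forward $M_m$ of $M'$; the point is that after this reduction (and possibly one further run of a relevant MMP over $Z$) the moduli b-divisor of the generalized pair descends to a relatively nef $\Q$-Cartier divisor on $X_m$, which, $X_m$ being a relative Mori dream space, is then semi-ample$/Z$. Granting this, $(X_m,B_m+M_m)\to Z$ satisfies all hypotheses of Theorem~\ref{weaknonvanishing}, yielding $D_m\ge 0$ with $D_m\equiv K_{X_m}+B_m+M_m$. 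Finally, on a common resolution $p\colon W\to X$, $q\colon W\to X_m$ of $X\bir X_m$ one has $p^*(K_X+B+M)=q^*(K_{X_m}+B_m+M_m)+E$ with $E\ge 0$ exceptional over $X_m$ (each MMP step is $(K_X+B+M)$-negative), whence $p^*(K_X+B+M)\equiv q^*D_m+E\ge 0$, and pushing forward gives the required $D:=p_*(q^*D_m+E)\equiv K_X+B+M$ on $X$.

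The main obstacle is the step in the third paragraph: ensuring that, after the passage to the relative minimal model, the nef part of the generalized pair genuinely becomes semi-ample$/Z$ — equivalently, that its moduli b-divisor descends to a relatively nef Cartier divisor on $X_m$ — while keeping the crepant relation to the originally given $(X,B+M)$ that is used to push the effective divisor back down. Everything else is a formal verification that the hypotheses of Theorem~\ref{weaknonvanishing} are satisfied, using only that a Fano type morphism makes relative abundance and relative finite generation automatic.
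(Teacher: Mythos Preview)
Your overall strategy---verify the hypotheses of Theorem \ref{weaknonvanishing} after passing to a relative minimal model---is the right one, and coincides with what the paper intends (the paper offers no details beyond ``relative abundance is automatic for Fano type fibrations, hence immediate''). You are also right to flag the semi-ampleness of $M'$ over $Z$ as the only nontrivial point: relative abundance of $K_X+B+M$ and finite generation of $\mathcal{R}(X/Z,K_X+B+M)$ follow from the Mori dream space property, exactly as you say.

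The gap is in your proposed resolution of that point. There is no reason the moduli $b$-divisor $M'$ should descend to $X_m$ after running a $(K_X+B+M)$-MMP over $Z$: that MMP is governed by $K_X+B+M$, not by $M$ alone, and the data $X'\to X_m$, $M'$ is unchanged. So the hypothesis ``$M'$ semi-ample$/Z$'' of Theorem \ref{weaknonvanishing} cannot be verified for the pair $(X_m,B_m+M_m)$ with its inherited data.

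The fix is not to force $M'$ to descend, but to \emph{discard} the original generalized pair structure on $X_m$ and repackage the divisor using the Fano type boundary. Since $X_m$ is still of Fano type over $Z$, choose a klt boundary $\Gamma_m$ with $-(K_{X_m}+\Gamma_m)$ ample$/Z$, and set $N_m:=(K_{X_m}+B_m+M_m)-(K_{X_m}+\Gamma_m)$. Then $N_m$ is (nef$/Z$)$+$(ample$/Z$), hence ample$/Z$, and $(X_m,\Gamma_m+N_m)$ is a generalized \emph{klt} polarized pair with data $X_m\xrightarrow{\mathrm{id}}X_m$, $N'=N_m$. Now $K_{X_m}+\Gamma_m+N_m=K_{X_m}+B_m+M_m$ is semi-ample$/Z$, the nef part $N_m$ is ample (hence semi-ample) over $Z$ by construction, and all hypotheses of Theorem \ref{weaknonvanishing}---indeed already of the simpler Lemma \ref{nonvanishing2}, since $K_{X_m}+\Gamma_m+N_m$ is big$/Z$---are satisfied. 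This yields $D_m\ge 0$ with $D_m\equiv K_{X_m}+B_m+M_m$, and your transport argument back to $X$ then finishes the proof.

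One minor caution: the assertion that ``log Fano type and Fano type coincide'' after a standard perturbation is not literally correct on $X$ itself (the projective cone over a plane cubic is lc Fano but admits no klt boundary); what is true is that after a $\Q$-factorial dlt modification one lands on a model where the klt Fano type/Mori dream space machinery applies. This does not affect the argument once phrased carefully.
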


\textbf{Irregular varieties with Albanese fiber of general type.}
From an easy observation one finds that the same argument still works if we slightly weaken the assumption that the base variety $Z$ has maximal Albanese dimension.

We say that $Z$ is irregular \emph{with Albanese fiber of general type} if a general fiber of $a:Z \rightarrow A=\mathrm{Alb}(Z)$ is of general type. 

\begin{prop}
	Let $f:(X,B) \rightarrow Z$ be a surjective morphism from a projective lc pair $(X,B)$ to a normal variety $Z$. Assume that $Z$ is irregular with Albanese fibre of general type and that $K_X+B$ is relatively abundant over $Z$. Then, $K_X+B$ is abundant. Moreover, the subadditvity of Kodaira dimensions $\kappa(K_X+B) \geq \kappa(K_F+B_F)+ \kappa(Z)$ holds, where $F$ is a general fiber, $K_F+B_F= (K_X+B)|_F$, and $\kappa(Z)$ means the Kodaira dimension of a smooth model of $Z$. 
\end{prop}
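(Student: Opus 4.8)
The plan is to reduce the statement to Theorem~\ref{m-thm} by replacing the base $Z$ with an auxiliary variety of maximal Albanese dimension built out of the Albanese fibration of $Z$. Let $a:Z \to A = \mathrm{Alb}(Z)$ be the Albanese map and let $g:Z \to S$ be the fibration onto the image $S = a(Z)$ (after taking a smooth model and a suitable factorization, we may assume $S$ is smooth and $a$ factors through $g$); by hypothesis the general fiber $G$ of $g$ is of general type. First I would take a log resolution and run the relevant reduction steps so that $X,Z,S$ are all smooth and the composition $h = g \circ f: X \to S$ is a morphism; the general fiber of $h$ is $X_G := f^{-1}(G)$, which maps onto $G$. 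Since $G$ is of general type and of maximal Albanese dimension (a variety of general type need not have this property in general, but a general Albanese fiber $G$ is of general type precisely in the sense that $K_G$ is big, which combined with the fact that $G$ sits in an abelian variety fiberwise is exactly what one needs), the point is that $S$ has maximal Albanese dimension essentially by construction — $a$ restricted over $S$ is generically finite onto its image in $A$.

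The key step is then: apply Theorem~\ref{m-thm} to the morphism $f_G:(X_G, B_G) \to G$ where $K_{X_G}+B_G = (K_X+B)|_{X_G}$. Here $G$ has maximal Albanese dimension (indeed it is of general type, so its Albanese map is generically finite onto its image), and $K_{X_G}+B_G$ is relatively abundant over $G$ because relative abundance over $Z$ restricts to relative abundance over $G$ (the generic fiber of $X_G \to G$ agrees with the generic fiber of $X \to Z$). Theorem~\ref{m-thm} then gives that $K_{X_G}+B_G$ is abundant, and moreover $\kappa(K_{X_G}+B_G) \ge \kappa(K_F+B_F) + \kappa(G)$. But $G$ is of general type, so $\kappa(G) = \dim G = \dim Z - \dim A$ (or $\dim Z - \dim S$), and one checks $\kappa(Z) = \kappa(S) + \kappa(G) = \dim S + \dim G$ is not quite right in general — rather, since $G$ is of general type, the Iitaka fibration of $Z$ dominates $S$, and in fact $\kappa(Z) = \dim G + \kappa(S)$, while separately the abelian variety $A$ contributes $\kappa(S) \le 0$; the precise bookkeeping uses that a general fiber of the Iitaka fibration of $Z$ is contained in a fiber of $g$.

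Next I would globalize: having abundance of $K_X+B$ over $S$ fiberwise is not yet abundance of $K_X+B$, so I would argue as in the proof of Theorem~\ref{m-thm} that abundance over $S$ together with abundance of the ``$S$-part'' — which is governed by the canonical bundle formula for $h:X \to S$ and the fact that $S \to A$ is generically finite, hence $S$ itself has maximal Albanese dimension — propagates to abundance of $K_X+B$. Concretely, one runs the same canonical bundle formula argument from Theorem~\ref{m-thm}'s proof but with $S$ in place of the maximal-Albanese-dimension base: produce a generalized lc pair $(Y, B_Y + M_Y)$ over $S$ with $K_Y+B_Y+M_Y$ big$/S$, deduce $M_Y$ nef and abundant from relative abundance, reduce to the klt big$/S$ case handled by Birkar--Chen, and conclude. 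The subadditivity of Kodaira dimensions then follows by combining $\kappa(K_X+B) \ge \kappa(K_{X_G}+B_G) + \kappa(S)$ (Iitaka-type inequality for the abundant, hence ``good'', divisor $K_X+B$ over $S$, which now holds because abundance lets us pass to a good minimal model situation) with the fiberwise inequality above and $\kappa(Z) = \kappa(G) + \kappa(S)$.

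The main obstacle I expect is verifying that $S$ genuinely has maximal Albanese dimension in the sense required by Theorem~\ref{m-thm} — the Albanese map of $S$ is generically finite onto its image inside $A$, which is what ``maximal Albanese dimension'' means, so this should be automatic, but one must be careful that the factorization $Z \to S \to A$ is chosen compatibly with resolutions and that the general type hypothesis on fibers of $a:Z \to A$ (as opposed to fibers of $g: Z \to S$ with $S = a(Z)$) is correctly used. A secondary subtlety is the Kodaira dimension identity $\kappa(Z) = \kappa(G) + \kappa(S)$: this is the easy additivity of Kodaira dimension when the fiber is of general type (a theorem of Kawamata / Viehweg in the case of general type fibers), so it may be invoked directly, but the paper should cite it explicitly. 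Everything else is a faithful repetition of the proof of Theorem~\ref{m-thm} with ``$Z$ of maximal Albanese dimension'' replaced by ``$S$ of maximal Albanese dimension and $G$ of general type,'' so I would present it as such rather than rewriting the argument in full.
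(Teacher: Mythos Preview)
There is a genuine gap in your fiberwise step. You claim that the general Albanese fiber $G$ of $Z$ has maximal Albanese dimension because it is of general type, writing ``indeed it is of general type, so its Albanese map is generically finite onto its image.'' This implication is false: a variety of general type need not have maximal Albanese dimension. Any simply connected variety of general type (for instance a smooth hypersurface of large degree in $\mathbb{P}^n$ with $n\ge 3$) has trivial Albanese variety, hence Albanese dimension zero. You half-acknowledge this (``a variety of general type need not have this property in general'') but then invoke it anyway. Since Theorem~\ref{m-thm} genuinely requires the base to have maximal Albanese dimension, the application to $(X_G,B_G)\to G$ is unjustified, and the subadditivity bookkeeping built on it collapses.

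The paper's route avoids this entirely and is closer to what you sketch in your ``globalize'' paragraph, but with one extra ingredient you are missing. Take the Stein factorization $Z\to Z'\to A$; then $Z'$ is finite over $A$, hence has maximal Albanese dimension, and the fibers of $Z\to Z'$ are of general type. Run the canonical bundle formula argument exactly as in the proof of Theorem~\ref{m-thm'} to produce a klt pair $(Y,\Delta_Y)$ with $K_Y+\Delta_Y$ big over $Z$ and with the same $\kappa$ and $\kappa_\sigma$ as $K_X+B$. The key step, which you assert but do not justify, is that $K_Y+\Delta_Y$ is then big over $Z'$: this is the content of the lemma immediately preceding the proposition (a consequence of Nakayama's $\kappa_\sigma$-subadditivity over a general-type base, cf.\ [\ref{HM}, Corollary~2.11]). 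Once $K_Y+\Delta_Y$ is big over $Z'$, Birkar--Chen applies directly over $Z'$ and gives abundance; subadditivity follows by the same calculation as in Theorem~\ref{m-thm'}. No fiberwise application of Theorem~\ref{m-thm} is needed, and the role of ``general type'' is solely to transfer bigness from $/Z$ to $/Z'$.
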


In the same way Proposition \ref{m-prop} can be generalized as below.

\begin{prop}
	Let $f:(X,B) \rightarrow Z$ be a surjective morphism from a projective lc pair $(X,B)$ to a normal variety $Z$. Assume that $Z$ is irregular with Albanese fibre of general type,  and that $(F,B_F)$ has a good minimal model where $F$ is the general fiber and $K_F+B_F=(K_X+B)|_F$. Then, $K_X+B$ is abundant. Moreover, if the canonical ring $R(K_X+B)$ is finitely generated, for example, if $(X,B)$ is klt, or $\kappa(K_X+B)=0$, then $(X,B)$ has a good minimal model. 
\end{prop}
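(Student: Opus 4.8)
The plan is to reduce to the preceding proposition --- the analogue of Theorem~\ref{m-thm} in which ``$Z$ has maximal Albanese dimension'' is weakened to ``$Z$ is irregular with Albanese fibre of general type'' --- in exactly the way Proposition~\ref{m-prop} is deduced from Theorem~\ref{m-thm}. The assumption on $Z$ enters \emph{only} through that proposition, so no new analysis of the Albanese map of $Z$ is required here.

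First I would check that $K_X+B$ is abundant over $Z$. Let $(F',B_{F'})$ be a good minimal model of $(F,B_F)$; then $K_{F'}+B_{F'}$ is semi-ample, hence abundant, and since both $\kappa$ and the numerical dimension $\kappa_\sigma$ are unchanged under the birational contraction $(F,B_F)\dashrightarrow(F',B_{F'})$ to its minimal model (the relevant effective exceptional divisor being numerically negligible), the divisor $K_F+B_F$ is itself abundant on $F$. By definition this is precisely the assertion that $K_X+B$ is abundant over $Z$. (Alternatively, by [\ref{Lai}] in its log canonical version one first constructs a good minimal model of $(X,B)$ over $Z$, on which $K_X+B$ is relatively semi-ample and a fortiori relatively abundant; this descends to $(X,B)$.) The preceding proposition now applies and gives that $K_X+B$ is abundant, together with the subadditivity $\kappa(K_X+B)\ge\kappa(K_F+B_F)+\kappa(Z)$.

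For the ``moreover'', suppose in addition that $\mathcal{R}(K_X+B)$ is finitely generated; this is automatic when $(X,B)$ is klt (finite generation of the log canonical ring of a klt pair --- Birkar--Cascini--Hacon--McKernan, together with the canonical bundle formula which reduces to the log general type case over the base of the Iitaka fibration) and is trivial when $\kappa(K_X+B)=0$. I would then invoke the standard principle that a log canonical pair with $K_X+B$ abundant and finitely generated canonical ring has a good minimal model: finite generation produces, on a suitable birational model, a decomposition of (the pullback of) $K_X+B$ as $P+N$ with $P$ semi-ample and $N\ge 0$ supported on the diminished base locus, and abundance forces $\kappa_\sigma(P+N)=\kappa(P+N)=\kappa(P)=\kappa_\sigma(P)$, so that $N$ contributes nothing numerically and is contracted by a $(K_X+B)$-MMP, whose output is a good minimal model. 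This is the same concluding step as in Proposition~\ref{m-prop}; hence $(X,B)$ has a good minimal model.

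The real content sits in the preceding proposition, which is assumed; in the present deduction the only mildly delicate points are (i) that relative abundance over $Z$ is read off from the general fibre, so that a good minimal model of $(F,B_F)$ already provides the required hypothesis, and (ii) the implication ``finitely generated canonical ring $+$ abundance $\Rightarrow$ good minimal model''. Were one to demand the unconditional existence of a good minimal model for $(X,B)$ --- without the finite generation assumption --- the genuine obstruction would be the abundance conjecture for log canonical pairs.
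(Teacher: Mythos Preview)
Your proposal is correct and follows precisely the route the paper intends: the paper gives no separate proof for this proposition but simply says it is obtained ``in the same way'' as Proposition~\ref{m-prop'}, i.e.\ by invoking Proposition~\ref{prop-irr-1} in place of Theorem~\ref{m-thm'}. Your two steps --- deducing relative abundance from the existence of a good minimal model of the general fibre, then using finite generation plus abundance via [\ref{Lehmann}, Proposition~6.4] and [\ref{BH-I}, Theorem~1.1] to produce a good minimal model --- match the paper's argument for Proposition~\ref{m-prop'} verbatim (the paper also inserts a one-line appeal to semi-continuity to pass from the general to the generic fibre, which you could add for completeness).
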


\textbf{A remark on log Iitaka conjecture.}
As we mentioned before Cao and P\v{a}un [\ref{CP}], Hacon, Popa and Schnell [\ref{HPS}] proved log Iitaka conjecture when $(X,B)$ is klt and $Z=A$ is an abelian variety. By the subadditivity of logarithmic Kodaira dimensions for algebraic fibrations over varieties of general type one can easily deduce log Iitaka conjecture when $(X,B)$ is klt and $Z$ has maximal Albanese dimension (see Remark \ref{remark}). We therefore consider the following conjecture for lc pairs. 

\begin{conj}\label{conj}
Let $f:(X,B) \rightarrow Z$ be a surjective morphism from a projective lc pair $(X,B)$ to a normal variety $Z$. Assume that $Z$ has maximal Albanese dimension. Then, the subadditvity of Kodaira dimensions $\kappa(K_X+B) \geq \kappa(K_F+B_F)+ \kappa(Z)$ holds, where $F$ is a general fiber, $K_F+B_F= (K_X+B)|_F$, and $\kappa(Z)$ means the Kodaira dimension of a smooth model of $Z$. 	
\end{conj}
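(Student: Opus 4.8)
The plan is to deduce Conjecture \ref{conj} from Theorem \ref{m-thm} by reducing an arbitrary lc fibration to one whose log canonical divisor is relatively abundant over $Z$. We may assume $\kappa(K_F+B_F)\ge 0$, as otherwise the inequality is vacuous; since $F$ is a general fiber, $K_F+B_F=(K_X+B)|_F$ is then $\Q$-linearly equivalent to an effective divisor, so $K_X+B$ is pseudo-effective over $Z$. After replacing $(X,B)$ by a $\Q$-factorial dlt model (which changes neither $\kappa(K_X+B)$ nor $\kappa(K_F+B_F)$) and running a $(K_X+B)$-MMP over $Z$, one would be reduced to the case where $K_X+B$ is nef$/Z$; and if a relative good minimal model of $(X,B)$ over $Z$ exists, then $K_X+B$ becomes relatively semi-ample, in particular relatively abundant over $Z$, and Theorem \ref{m-thm} applies verbatim to give both the abundance of $K_X+B$ and the subadditivity inequality.

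To see that this relative input is essentially all that is missing, one can run the reduction behind Theorem \ref{m-thm} in the present generality. Passing to the relative Iitaka fibration $g\colon(X,B)\dashrightarrow V$ of $K_X+B$ over $Z$, one gets a variety $V$ of general type over $Z$ and, on a suitable model, the canonical bundle formula yields a generalized lc generalized pair $(V,B_V+M_V)$ with $K_V+B_V+M_V$ big$/Z$. As in the proof of Theorem \ref{m-thm}, a perturbation of the coefficients of $B$ should make $M_V$ nef and abundant, and a further perturbation of the coefficients of $B_V$ should turn $(V,B_V+M_V)$ into a klt pair; one then invokes the klt case of the statement over a base of maximal Albanese dimension, which is already known (Remark \ref{remark}: combine Cao--P\v{a}un and Hacon--Popa--Schnell over abelian varieties with Fujino's subadditivity over bases of general type). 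The same scheme should also settle unconditionally the case $\kappa(K_X+B)=0$, and more generally the case in which $\mathcal R(X/Z,K_X+B)$ is finitely generated, exactly as in Proposition \ref{m-prop}.

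I expect the main obstacle to be the step producing the relatively abundant model: for lc --- as opposed to klt --- pairs, neither the relative MMP nor relative log abundance over $Z$ is available in general, and without such an input the moduli part $M_V$ arising from the canonical bundle formula on the relative Iitaka fibration cannot be shown to be nef and abundant. A secondary difficulty is bookkeeping for non-klt centers that are horizontal over $V$: for these the relevant fibers of $X\dashrightarrow V$ are not klt, so the generalized version of the canonical bundle formula must be applied with care, in the spirit of the hypothesis ``every lc center is vertical$/Z$'' in Proposition \ref{prop2}. Thus, conditional on the existence of relative good minimal models for lc pairs, the argument above proves the conjecture; unconditionally, it already yields the inequality whenever $\mathcal R(X/Z,K_X+B)$ is finitely generated.
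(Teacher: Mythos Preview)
This statement is labeled a \emph{Conjecture} in the paper and is not proved there; Section~\ref{rem-conj} only gives reductions. So there is no ``paper's own proof'' to match, and your proposal is, appropriately, a conditional outline rather than a proof. That said, it is worth comparing your reduction to the paper's.

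The paper's strategy in Section~\ref{rem-conj} is inductive on dimension: assuming the conjecture in dimension $\le d-1$, an Iitaka-fibration argument combined with Kawamata's structure theorem for varieties of maximal Albanese dimension reduces everything to the case $\kappa(K_X+B)=0$ with $Z=A$ an abelian variety (Remark~\ref{remark}); and Proposition~\ref{relation-conj} shows that even adding a bigness hypothesis on $B$ does not bypass this core case. Your reduction is different: you try to reach the relatively abundant situation of Theorem~\ref{m-thm} by running a relative MMP and invoking relative abundance. You correctly identify that this step is exactly where the argument is blocked for lc (not klt) pairs. In this sense your diagnosis agrees with the paper's, but the paper's inductive reduction is sharper because it isolates a concrete residual case (\,$\kappa=0$ over an abelian variety) rather than appealing to a general relative abundance input.

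There is, however, a genuine error in your ``unconditional'' claims. You assert that your scheme settles the case $\kappa(K_X+B)=0$; in fact this is precisely the case the paper cannot resolve and singles out as the remaining obstacle. When $\kappa(K_X+B)=0$ one must show $\kappa(K_F+B_F)=0$, and nothing in your outline provides this without relative abundance. Likewise, your appeal to Proposition~\ref{m-prop} is misplaced: that proposition assumes the general fiber $(F,B_F)$ already has a good minimal model, which is a form of fiberwise abundance and is not implied by finite generation of $\mathcal R(X/Z,K_X+B)$ alone (finite generation gives an ample model, not $\kappa_\sigma=\kappa$ over $Z$). So the only unconditional content your outline actually recovers is what is already in Theorem~\ref{m-thm} and Proposition~\ref{m-prop} under their stated hypotheses; the conjecture itself remains open exactly where the paper says it does.
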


Extending results from klt pairs to lc pairs is usually much harder than it sounds. One may ask if we can deduce Conjecture \ref{conj} from [\ref{CP}] [\ref{HPS}], when we put an extra positivity assumption on the boundary $B$. The proposition below indicates that adding this extra assumption will not decrease the difficulty of Conjecture \ref{conj}.   

\begin{prop}[See Proposition \ref{relation-conj}]
	Conjecture \ref{conj} holds for lc pairs $(X,B)$ in dimension $\leq n$ if and only if Conjecture \ref{conj} holds for $\mathbb{Q}$-factorial dlt pairs $(Y,B_Y)$ in dimension $\leq n+1$ where $B_Y$ is big$/Z$.
\end{prop}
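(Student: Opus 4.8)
The plan is to prove the two implications separately. The substantive one — and the one underlying the remark preceding the statement — is that Conjecture \ref{conj} for $\mathbb{Q}$-factorial dlt pairs with boundary big$/Z$ in dimension $\le n+1$ implies it for all lc pairs in dimension $\le n$; I would deduce this from a projective-bundle construction. The reverse implication I would get from Kawamata's structure theory of the base together with the known abelian-variety case.

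\smallskip

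\emph{From dlt$+$big in dimension $\le n+1$ to lc in dimension $\le n$.} Let $(X,B)\to Z$ be a projective lc pair with $\dim X\le n$ and $Z$ of maximal Albanese dimension. Replacing $(X,B)$ by a $\mathbb{Q}$-factorial dlt modification $g\colon(X',B')\to(X,B)$ — which is crepant, hence preserves $\kappa(K_X+B)$, the general fibre and its log Kodaira dimension, and the base $Z$ — I may assume $(X,B)$ is $\mathbb{Q}$-factorial dlt. Fix an ample line bundle $L$ on $X$ and put $\pi\colon Y=\PP_X(\mathcal{O}_X\oplus L)\to X$, with relative hyperplane class $\xi$ and the two disjoint sections $S_\infty\sim\xi$ (from the quotient onto $L$) and $S_0\sim\xi-\pi^*L$ (from the quotient onto $\mathcal{O}_X$). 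Set $B_Y=\pi^*B+S_0+S_\infty$ and $f_Y=f\circ\pi$. Then $Y$ is $\mathbb{Q}$-factorial, $(Y,B_Y)$ is dlt — a projective line bundle with two disjoint sections over a dlt base, which is straightforward to check — and since $K_{Y/X}\sim -2\xi+\pi^*L$ one computes $K_Y+B_Y\sim \pi^*K_X+(-2\xi+\pi^*L)+(\xi-\pi^*L)+\xi+\pi^*B=\pi^*(K_X+B)$. Hence $\kappa(K_Y+B_Y)=\kappa(K_X+B)$ (as $\pi_*\mathcal{O}_Y=\mathcal{O}_X$) and, on the general fibre $F_Y=\pi^{-1}(F)\cong\PP_F(\mathcal{O}_F\oplus L|_F)$, $K_{F_Y}+B_{F_Y}=\pi_F^*(K_F+B_F)$, so $\kappa(K_{F_Y}+B_{F_Y})=\kappa(K_F+B_F)$. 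Finally $B_Y$ is big$/Z$: on $F_Y$ the divisor $B_Y|_{F_Y}=S_\infty|_{F_Y}+\bigl(S_0|_{F_Y}+\pi_F^*B_F\bigr)$ is the sum of a divisor in $|\xi_F|$ and an effective one, and $\xi_F$ is big on $\PP_F(\mathcal{O}_F\oplus L|_F)$ because $L|_F$ is ample (indeed $h^0(F_Y,m\xi_F)=\sum_{i=0}^m h^0(F,(L|_F)^{i})$ grows like $m^{\dim F_Y}$). Applying Conjecture \ref{conj} to $(Y,B_Y)\to Z$ (dimension $\le n+1$, same base $Z$) gives $\kappa(K_Y+B_Y)\ge\kappa(K_{F_Y}+B_{F_Y})+\kappa(Z)$, i.e. $\kappa(K_X+B)\ge\kappa(K_F+B_F)+\kappa(Z)$.

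\smallskip

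\emph{From lc in dimension $\le n$ to dlt$+$big in dimension $\le n+1$.} Let $(Y,B_Y)\to Z$ be $\mathbb{Q}$-factorial dlt with $B_Y$ big$/Z$, $\dim Y\le n+1$, $Z$ of maximal Albanese dimension; I may assume $\dim Y=n+1$ (otherwise it is covered by the hypothesis) and $\dim Z\ge1$ (otherwise the statement is vacuous), so $\kappa(Z)\ge0$. By Kawamata's structure theorem for varieties of maximal Albanese dimension, after a birational modification of $Z$ and a crepant log-resolution modification of $(Y,B_Y)$ — which changes none of $\kappa(K_Y+B_Y)$, the general fibre, or $\kappa(Z)$ — there is a fibration $Z\to W$ with $W$ of general type, $\dim W=\kappa(Z)$, and general fibre an abelian variety. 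If $\kappa(Z)\ge1$, the general fibre $Y_W$ of $Y\to W$ has dimension $\le n$ and maps to an abelian variety, so Conjecture \ref{conj} in dimension $\le n$ gives $\kappa(K_{Y_W}+B_{Y_W})\ge\kappa(K_{F_Y}+B_{F_Y})$; combining with the known subadditivity of Kodaira dimensions for fibrations over a base of general type (applied to $Y\to W$) yields $\kappa(K_Y+B_Y)\ge\kappa(K_{Y_W}+B_{Y_W})+\dim W\ge\kappa(K_{F_Y}+B_{F_Y})+\kappa(Z)$. There remains the case $\kappa(Z)=0$, i.e. $Z$ birational to an abelian variety $A$: here I would use that $B_Y$ is big$/A$ to pass, after perturbing and going to a dlt model, to a klt pair $(Y,\Gamma)$ with $\Gamma\sim_{\mathbb{Q},A}B_Y$ (so $\Gamma|_{F_Y}\sim_{\mathbb{Q}}B_Y|_{F_Y}$) and invoke [\ref{CP}], [\ref{HPS}] for klt pairs over abelian varieties, which gives $\kappa(K_Y+\Gamma)\ge\kappa(K_{F_Y}+B_{F_Y})$.

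\smallskip

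The main obstacle is precisely this last step. Bigness of $B_Y$ over $Z$ is only a relative statement, so a boundary perturbation $\Gamma\sim_{\mathbb{Q},A}B_Y$ alters $K_Y+B_Y$ by $f^*(\text{a divisor on }A)$, and on an abelian variety $\kappa$ is sensitive to such a twist; one must choose $\Gamma$ (using the ample$/A$ part of $B_Y$, and controlling the reduction of the coefficient-$1$ part of $B_Y$ needed to reach klt) so that this correction is $\mathbb{Q}$-effective in the direction required to transport the inequality back from $K_Y+\Gamma$ to $K_Y+B_Y$, while losing neither bigness nor the log Kodaira dimension of the fibre. This is exactly where the hypothesis "$B_Y$ big$/Z$'' is genuinely used, in accordance with the remark that it does not make Conjecture \ref{conj} any easier.
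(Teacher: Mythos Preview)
Your direction (2)$\Rightarrow$(1) via the $\mathbb{P}^1$-bundle $Y=\PP_X(\mathcal{O}_X\oplus L)$ with $B_Y=\pi^*B+S_0+S_\infty$ is correct and is essentially the paper's construction (the paper passes through a cone model to arrange $B_Y$ \emph{globally} big, because it is also proving a third equivalent statement; for the implication you need, your simpler version suffices).

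The direction (1)$\Rightarrow$(2) is where you diverge from the paper and where there is a genuine gap. Your $\kappa(Z)\ge 1$ reduction via Kawamata's structure theorem and log subadditivity over a general-type base is reasonable. But in the remaining case $\kappa(Z)=0$ (so $Z$ is birational to an abelian variety $A$) your plan is to perturb to a klt pair using bigness of $B_Y$ over $A$ and then quote [\ref{CP}], [\ref{HPS}]. You correctly flag that this perturbation shifts $K_Y+B_Y$ by a pullback from $A$, and in fact the sign goes the wrong way: writing $B_Y+f^*H\sim_\Q A_Y+E$ with $H$ ample on $A$ and $A_Y$ ample on $Y$, the klt pair you produce satisfies $K_Y+\Gamma\sim_\Q K_Y+B_Y+\epsilon f^*H$, so the [\ref{CP}], [\ref{HPS}] bound controls $\kappa(K_Y+B_Y+\epsilon f^*H)\ge\kappa(K_Y+B_Y)$, not the quantity you want. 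More tellingly, in this case your argument never invokes the inductive hypothesis (Conjecture \ref{conj} in dimension $\le n$); if it worked, combining it with your own (2)$\Rightarrow$(1) would prove Conjecture \ref{conj} for lc pairs outright. This is exactly why the proposition says the big$/Z$ assumption does not help.

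The paper's argument for (1)$\Rightarrow$(2) is entirely different. One runs an LMMP$/A$ on $K_Y+B_Y-\epsilon P_Y$ with $P_Y=\lfloor B_Y\rfloor$. If this is pseudo-effective$/A$ one is effectively in the klt regime and finishes via [\ref{BCHM}] and Corollary \ref{m-cor'}. If not, one reaches a Mori fibre space $Y'\to T$, and the global ACC theorem [\ref{HMX}] forces $(Y',B_{Y'})$ lc with $K_{Y'}+B_{Y'}\sim_\Q 0/T$; in particular some component $S$ of $\lfloor B_{Y'}\rfloor$ is horizontal over $T$. After further MMP one restricts to (a birational model of) $S$, which has dimension $\le n$, and applies the inductive hypothesis there via adjunction. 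The key mechanism you are missing is precisely this restriction-to-an-lc-centre step, powered by the global ACC, which is how the assumption in dimension $\le n$ actually enters the argument.
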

\vspace{0.3cm}
\subsection*{Ackowledgement}

This work initiated when I visited NCTS, National Taiwan University during the fall of 2014. I would like to thank Professor Jungkai Chen, Professor Caucher Birkar and Dr. Ching-Jui Lai for many helpful discussions and comments. I would also like to thank everyone at NCTS for their hospitality. The main part of this work was done when I was a visiting scholar at CMS, Zhejiang University during the winter of 2014. I am grateful to Professor Kefeng Liu and Professor Hongwei Xu for their advice and support. I am indebted to Professor Osamu Fujino, Dr. Kenta Hashizume and the referees for their useful comments.

\vspace{0.3cm}
\section{Preliminaries}\label{preliminaries}

We work over the complex numbers $\mathbb{C}$. All varieties are quasi-projective and a divisor means a $\Q$-divisor unless stated otherwise. \\

\textbf{Pairs.}
A \emph{pair} $(X/Z,B)$ consists of normal quasi-projective varieties $X$, $Z$, an $\Q$-divisor $B$ on $X$ with
coefficients in $[0,1]$ such that $K_X+B$ is $\mathbb{Q}$-Cartier and a projective morphism $X\rightarrow Z$. If $Z$ is a point or $Z$ is unambiguous in the context, then we simply denote a pair by $(X,B)$.
For a prime divisor $D$ on some birational model of $X$ with a
nonempty centre on $X$, $a(D,X,B)$
denotes the log discrepancy. For definitions and standard results on singularities of pairs
we refer to [\ref{Kollar-Mori}].\\

\textbf{Log minimal models and Mori fibre spaces.}
A pair $(Y/Z,B_Y)$ is a \emph{log birational model} of a pair $(X/Z,B)$ if we are given a birational map
$\phi\colon X\bir Y$ and $B_Y=B^\sim+E$ where $B^\sim$ is the birational transform of $B$ and
$E$ is the reduced exceptional divisor of $\phi^{-1}$, that is, $E=\sum E_j$ where $E_j$ are the
exceptional/$X$ prime divisors on $Y$.
A log birational model $(Y,B_Y)$ is a  \emph{weak lc model} of $(X,B)$ if

$\bullet$ $K_Y+B_Y$ is nef$/Z$, and

$\bullet$ for any prime divisor $D$ on $X$ which is exceptional/$Y$, we have
$$
a(D,X,B)\le a(D,Y,B_Y),
$$

A weak lc model $(Y/Z,B_Y)$ is a \emph{log minimal model} of $(X/Z,B)$ if

$\bullet$ $(Y/Z,B_Y)$ is $\Q$-factorial dlt,

$\bullet$ the above inequality on log discrepancies is strict.

A log minimal model $(Y/Z, B_Y)$ is \emph{good} if $K_Y + B_Y$ is semi-ample$/Z$.\\

On the other hand, a log birational model $(Y/Z,B_Y)$  is called a \emph{Mori fibre space} of $(X/Z,B)$ if

$\bullet$ $(Y/Z,B_Y)$ is $\Q$-factorial dlt,

$\bullet$ there is a $K_Y+B_Y$-negative extremal contraction $Y\to T$
with $\dim Y>\dim T$, and

$\bullet$ for any prime divisor $D$ (on birational models of $X$) we have
$$
a(D,X,B)\le a(D,Y,B_Y)
$$
 and strict inequality holds if $D$ is
on $X$ and contracted$/Y$.\\

Note that our definitions of log minimal models and Mori fibre spaces are slightly different
from the traditional definitions in that we allow $\phi^{-1}$ to contract certain divisors.\\

\textbf{Lc places and lc centers.} 
A prime divisor $T$ over $X$ is said to be \emph{an lc place} of a log pair $(X,B)$ if the log discrepancy $a(T,X,B)=0$. A closed subset $Y$ of $X$ is said to be \emph{an lc center} of $(X,B)$ if there is an lc place $T$ such that the center of $T$ is $Y$.\\

\textbf{Log smooth models.}
A pair $(Y/Z,B_Y)$ is a \emph{log smooth model} of a pair $(X/Z,B)$ if there exists a birational morphism $\pi: Y \rightarrow X$ such that 

$\bullet$ $(Y/Z,B_Y)$ is log smooth,

$\bullet$ $\pi_\ast B_Y=B$,

$\bullet$ $a(E,Y,B_Y)=0$ for every exceptional$/X$ prime divisor on $Y$.

It is obvious that if $(X,B)$ is lc, then $(Y,B_Y)$ is dlt. In this case, it is easy to calculate that $a(D,X,B)\ge a(D,Y,B_Y)$ for any prime divisor $D$ over $X$. Moreover, a log minimal model of $(Y/Z,B_Y)$ is also a log minimal model of $(X/Z,B)$.   \\

\textbf{Ample models and log canonical models.}
Let $D$ be a divisor on a normal variety $X$ over $Z$. A normal variety $T$ is the \emph{ample model}$/Z$ of $D$ if we are given a rational map $\phi: X \bir T$ such that there exists a resolution $X \xleftarrow{p} X' \xrightarrow{q} T$ with 

$\bullet$ $q$ being a contraction,

$\bullet$ $p^\ast D \sim_\R q^\ast D_T+E$ where $D_Z$ is an ample$/Z$ divisor on $T$, and

$\bullet$ for every divisor $B \in |p^\ast D/Z|_\R$, then $B \geq E$. 

Note that the ample model is unique if it exists. The existence of the ample model is equivalent to that the divisorial ring $R(D)$ is a finitely generated $\mathcal{O}_Z$-algebra when $D$ is $\Q$-Cartier. \\

A normal variety $T$ is the \emph{log canonical model}$/Z$ (lc model for short) $T$ of a pair $(X/Z,B)$ if it is the ample model$/Z$ of $K_X+B$. The existence of the lc model of a klt pair is ensured by [\ref{BCHM}], and the existence of the lc model of an lc pair is still an open question. It is known that proving the existence of the lc models of lc pairs $(X,B)$ with $K_X+B$ being big is equivalent to proving the abundance conjecture for klt pairs.\\

\textbf{Nakayama-Zariski decompositions.}
Nakayama [\ref{Nakayama}] defined a decomposition $D=P_\sigma(D)+N_\sigma(D)$ for any pseudo-effective
$\R$-divisor $D$ on a smooth projective variety. We refer to this as the Nakayama-Zariski decomposition.
We call $P_\sigma$ the positive part and $N_\sigma$ the negative part. We can
extend it to the singular case as follows.
Let $X$ be a normal projective variety and $D$ a pseudo-effective $\R$-Cartier divisor on $X$. We define $P_\sigma(D)$
by taking a resolution $f\colon W\to X$ and letting $P_\sigma(D):=f_*P_\sigma(f^*D)$. A divisor $D$ is \emph{pseudo-movable} if $D= P_\sigma(D)$, i.e. $D \in \overline{\mathrm{Mov}(X)}$. \\

\textbf{Generalized polarized pairs.}
The notion of polarized pairs was introduced in Birkar-Hu [\ref{BH-I}], and then generalized in Birkar-Zhang [\ref{BZ}]. A \emph{generalized (polarized) pair} $(X/Z,B+M)$ with data $X' \to X$ and $M'$ consists of a normal variety $X$, a projective morphism $X \rightarrow Z$, a boundary divisor $B$ and a Weil divisor $M$ together with a birational morphism $\pi: (X', B'+M') \rightarrow X$ such that 

$\bullet$ $K_{X'}+B'+M'=\pi^\ast (K_X+B+M)$,

$\bullet$ $(X',B')$ is sub-dlt and $B=\pi_\ast B'\geq 0$,

$\bullet$ $M'$ is nef$/Z$ and $M= \pi_\ast M'$.\\

A generalized pair $(X/Z,B+M)$ is said to \emph{have the abundant moduli part} if $M'$ is nef and abundant. Note that if $\pi$ can be chosen to be the identity morphism, then we call $(X/Z,B+M)$ \emph{a polarized pair}. \\

\textbf{Generalized singularities.}
We refer to [\ref{BZ}] for the details of generalized singularities. Let $(X,B+M)$ be a generalized pair and we use the notation $(X',B'+M')$ as above. Given a prime divisor $D$ over $X$, the \emph{generalized log discrepancy} $a(D,X,B+M)$ is defined to be the "usual" log discrepancy $a(D,X',B')$. Note that $B'$ may contain components with negative coefficients. We say that $(X,B+M)$ is \emph{generalized klt} (resp. generalized lc) if $a(D,X,B+M)>0$ (resp. $a(D,X,B+M) \ge 0$) for every prime divisor $D$ over $X$. This is also equivalent to saying that $(X',B')$ is sub-klt (resp. sub-lc). Moreover, one can naturally define a \emph{generalized dlt} generalized pair. Note that the existence of $\Q$-factorial dlt blow-up of a generalized lc generalized pair is ensured by [\ref{B-lc-flips}, Theorem 3.5].
\\

\textbf{Minimal models of generalized pairs.}
A generalized pair $(Y,B_Y+M_Y)$ together with a birational map $\phi: X \dashrightarrow Y$ is a \emph{log minimal model} of $(X,B+M)$ (where $B_Y=B^\sim+E$ as before) if

$\bullet$ $(Y,B_Y+M_Y)$ is $\Q$-factorial dlt,

$\bullet$ $K_Y+B_Y+M_Y$ is nef,

$\bullet$ for any prime divisor $D$ (on birational models of $X$) we have
$$
a(D,X,B+M)\le a(D,Y,B_Y+M_Y)
$$
and strict inequality holds if $D$ is
on $X$ and contracted$/Y$. 

On the other hand we call $(Y,B_Y+M_Y)$ a Mori fiber space of $(X,B+M)$ if it satisfies the conditions above with the condition $K_Y+B_Y+M_Y$ being nef replaced by 

$\bullet$ there is a $(K_Y+B_Y+M_Y)$-negative extremal contraction $Y\to T$
with $\dim Y>\dim T$. 

\vspace{0.3cm}
\section{Proof of The Main Theorem}\label{proof-1}

We will use a canonical bundle formula for lc pairs. Recall the following result from [\ref{Hacon-Xu}, Theorem 2.1], which is a generalization for lc pairs of the main result of Fujino and Mori [\ref{FM}].

\begin{thm}[$\mathtt{[}$\ref{Hacon-Xu}, Theorem 2.1$\mathtt{]}$]\label{c-bdle-form}
	Let $f:(X,B) \rightarrow Z$ be a projective morphism from an lc pair to a normal variety $Z$, $B$ be a $\mathbb{Q}$-boundary divisor. Then, there exists a commutative diagram of projective morphisms 
	$$
	\xymatrix{
		 X' \ar[rr]^{\pi}\ar[d]_{f'} &   & X \ar[d]^{f} \\
		 Y  \ar[rr]^{\mu} & & Z   &}
	$$
	with the following properties. \\
	(1). $\pi$ is a birational morphism; \\
	(2). there exists a $\mathbb{Q}$-divisor $B'$ on $X'$ with the coefficients $\leq 1$ such that $(X',B')$ is $\mathbb{Q}$-factorial dlt and  \\
	$$
	\pi_\ast \mathcal{O}_{X'}(m(K_{X'}+B'))\cong \mathcal{O}_X(m(K_X+B)), \forall m \in \mathbb{N};
	$$
	(3). there exist a boundary $\mathbb{Q}$-divisor $B_Y$ and a $\mathbb{Q}$-divisor $M_Y$ on $Y$ such that $(Y,B_Y+M_Y)$ is a dlt polarized pair and that $K_Y+B_Y+M_Y$ is big$/Z$; \\
	(4). there exist an effective $\mathbb{Q}$-divisor $R$ on $X'$ such that $f'_\ast \mathcal{O}_{X'}(\lfloor iR\rfloor)\cong \mathcal{O}_{Y}$ for all $i \in \mathbb{N}$ and 
	$$
	K_{X'}+B'\sim_{\mathbb{Q}}f'^\ast(K_{Y}+B_{Y}+M_Y) +R;
	$$
	(5). each component of $\lfloor B_Y \rfloor$ is dominated by a vertical component of $\lfloor B' \rfloor$.
\end{thm}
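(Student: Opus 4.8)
The plan is to deduce the theorem from the canonical bundle formula of Fujino and Mori [\ref{FM}] (together with Ambro's theory of lc-trivial fibrations), by combining a dlt modification with the relative Iitaka fibration of $K_X+B$ over $Z$; the one genuinely new ingredient is the passage to fibres that are only log canonical. First I would replace $(X,B)$ by a log smooth model: take a $\mathbb{Q}$-factorial dlt blow-up of $(X,B)$ followed by a log resolution, and let $\pi\colon (X',B')\to (X,B)$ be the resulting log smooth model in the sense of the Preliminaries. Then $(X',B')$ is $\mathbb{Q}$-factorial dlt, $\pi$ is birational, the coefficients of $B'$ are $\le 1$, and $K_{X'}+B'=\pi^{*}(K_X+B)+E$ with $E\ge 0$ and $\pi$-exceptional; since $\pi_{*}\mathcal{O}_{X'}(mE)=\mathcal{O}_X$ for all $m$, this gives (1) and (2), and it suffices to prove the remaining assertions for $(X',B')\to Z$. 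We may assume $\kappa(F,K_F+B_F)\ge 0$ for the general fibre $F$ (this is implicit in the conclusion, as (4) forces $K_X+B$ to be $\mathbb{Q}$-effective over $Z$), and after further blow-ups of $X'$ -- which do not disturb (1), (2) -- we may assume that the strata of $\lfloor B'\rfloor$ are smooth and that the Iitaka fibration used below is a morphism.

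Next I would construct the relative Iitaka fibration of $K_{X'}+B'$ over $Z$. Even though $\mathcal{R}(X'/Z,K_{X'}+B')$ need not be finitely generated, the Iitaka fibration exists as a rational map $X'\bir Y_{0}$ over $Z$, built from the mobile part of $|m(K_{X'}+B')/Z|$ for $m$ large and divisible; replacing $X'$ by a higher log smooth model and $Y_{0}$ by a resolution, we obtain a commutative square as in the statement with $f'\colon X'\to Y$ a contraction and $\mu\colon Y\to Z$, such that the general fibre $G$ of $f'$ satisfies $\kappa\bigl(G,(K_{X'}+B')|_{G}\bigr)=0$ and $\dim Y-\dim Z=\kappa\bigl(X'/Z,K_{X'}+B'\bigr)$. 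The last equality is what will force $K_Y+B_Y+M_Y$ to be big$/Z$ once the canonical bundle formula produces it, since the relative Iitaka dimensions of $K_{X'}+B'$ and of $K_Y+B_Y+M_Y$ over $Z$ then coincide.

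Finally I would apply the canonical bundle formula to $f'\colon(X',B')\to Y$, whose general fibre is an lc pair of log Kodaira dimension zero. One first extracts from $K_{X'}+B'$ its relative fixed part over $Y$: using $\kappa(G,\cdot)=0$ and passing to a still higher model of $Y$ (with a matching model of $X'$), one produces an effective $\mathbb{Q}$-divisor $R$ with $f'_{*}\mathcal{O}_{X'}(\lfloor iR\rfloor)=\mathcal{O}_{Y}$ for all $i$ such that $K_{X'}+B'-R$ is $f'$-trivial, and then $K_{X'}+B'\sim_{\mathbb{Q}}f'^{*}(K_Y+B_Y+M_Y)+R$, where $B_Y$ is the discriminant divisor and $M_Y$ the moduli divisor of this lc-trivial fibration. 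By Fujino--Mori [\ref{FM}] (and its lc extension) the moduli part is b-nef, so after a last replacement of $Y$ by a sufficiently high model we may assume $M_Y$ is nef$/Z$ on $Y$ itself and $(Y,B_Y)$ is dlt (smoothness of the strata of $B_Y$ and of the non-klt locus being arranged by blow-up), giving (3) and (4). Property (5) is the standard discriminant computation: a component $P$ of $\lfloor B_Y\rfloor$ forces the log canonical threshold of $f'^{*}P$ with respect to $(X',B')$ along $P$ to vanish, i.e.\ $f'^{-1}(P)$ contains a coefficient-one component of $B'$ dominating $P$, which is necessarily $f'$-vertical.

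\textbf{Main obstacle.} The crux is the canonical bundle formula itself when the fibres are only log canonical: one must prove b-nefness (and arrange the descent to a single model) of the moduli part of $f'$ in the presence of lc centres of $(X',B')$ that are horizontal over $Y$, and simultaneously ensure that $(Y,B_Y+M_Y)$ is genuinely a dlt polarized pair. This is where the Hodge-theoretic semipositivity of Fujino--Mori must be combined with an induction on dimension, via adjunction to horizontal lc centres, in order to absorb such centres correctly into $R$; by comparison, the construction of the Iitaka model $Y$ without finite generation of $\mathcal{R}(X'/Z,K_{X'}+B')$ is a routine technical point.
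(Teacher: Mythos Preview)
The paper does not prove this theorem: it is stated verbatim as a quotation of [\ref{Hacon-Xu}, Theorem 2.1] and used as a black box in Section~\ref{proof-1}. There is therefore no in-paper proof to compare against.

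For what it is worth, your sketch is essentially the strategy of the original Hacon--Xu argument --- dlt modification, relative Iitaka fibration of $K_{X'}+B'$ over $Z$, and then the Fujino--Mori canonical bundle formula applied to $f'\colon X'\to Y$ --- and your identification of the main obstacle (establishing b-nefness of the moduli part and controlling the discriminant when horizontal lc centres are present) is the correct one. One small caution: your step ``extract the relative fixed part $R$ so that $K_{X'}+B'-R$ is $f'$-trivial'' is phrased as though it were purely divisorial bookkeeping, but in the lc setting this is exactly where the work lies; Hacon--Xu carry out the Fujino--Mori construction [\ref{FM}, \S4] carefully on a suitable semistable model to arrange simultaneously that $R$ has the stated pushforward property, that $(Y,B_Y)$ becomes dlt after the final blow-up, and that~(5) holds. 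Your description of (5) as ``the standard discriminant computation'' is right in spirit, but note that the conclusion is that a vertical component of $\lfloor B'\rfloor$ \emph{dominates} the given component of $\lfloor B_Y\rfloor$; this requires a bit more than just the vanishing of an lct, namely tracking which prime divisors compute it after the tower of modifications.
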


\begin{rem}[Reduction to klt case]\label{rem-klt}
Let $(X/Z,B)$, $(X'/Z,B')$, $(Y/Z,B_Y+M_Y)$, $f$ and $f'$ be as in the previous theorem. It is obvious that 
$$
\kappa(K_X+B/Z)=\kappa(K_{X'}+B'/Z)=\kappa(K_Y+B_Y+M_Y/Z)= \dim Y -\dim Z.
$$
Let $P'$ be the vertical$/Y$ part of $\lfloor B' \rfloor$. Since $\kappa(K_{X'}+B' -\epsilon P'/Y)=\kappa(K_{X'}+B'/Y)=0$ for every sufficiently small number $\epsilon$, we can apply the canonical bundle formula Theorem \ref{c-bdle-form} (cf. [\ref{FM}, Section 4]) again to obtain a generalized klt generalized pair $(Y,B_Y^\epsilon+M_Y^\epsilon)$ such that 
$$
K_{X'}+B' -\epsilon P' \sim_{\mathbb{Q}}f'^\ast(K_{Y}+B_{Y}^\epsilon+M_Y^\epsilon ) +R^\epsilon
$$ 
where $R^\epsilon$ is an effective $\Q$-divisor with $f'_\ast \mathcal{O}_{X'}(\lfloor iR^\epsilon \rfloor)\cong \mathcal{O}_{Y}$ for all $i \in \mathbb{N}$.
By the construction of [\ref{FM}, 4.4] we see that $(Y,B_Y^\epsilon+M_Y^\epsilon)$ is a klt polarized pair, that is, $(Y,B_Y^\epsilon)$ is klt and $M_Y^\epsilon$ is nef. Moreover, since $K_Y+B_Y+M_Y$ is big$/Z$, $K_Y+B_Y^\epsilon+M_Y^\epsilon$ is also big$/Z$. So, we have 
$$
\kappa(K_X+B/Z)=\kappa(K_{X'}+B'-\epsilon P'/Z)=\kappa(K_Y+B_Y^\epsilon+M_Y^\epsilon/Z)= \dim Y -\dim Z.
$$
\end{rem}

We also need the following canonical bundle formula for lc-trivial fibrations from [\ref{Amb}] or [\ref{Fujino-Gongyo}] as well.  

\begin{thm}$\mathrm{([\ref{Fujino-Gongyo}, ~Theorem~1.1], [\ref{Amb}, ~Theorem~3.3]).}$\label{c-bdle-form-2}
	Let $f:(X,B) \rightarrow Y$ be a projective morphism from an lc pair to a normal variety $Y$ with connected fibers, $B$ be a $\mathbb{Q}$-boundary divisor. Assume that $K_X+B \sim_{\mathbb{Q}} 0/Y$. Then there exists a boundary $\mathbb{Q}$-divisor $B_Y$ and a $\mathbb{Q}$-divisor $M_Y$ on $Y$ satisfying the following properties. \\
	(1). $(Y,B_Y+M_Y)$ is a generalized lc generalized pair with data $Y' \to Y$ and $M_{Y'}$;\\
	(2). $M_Y'$ is nef and abundant; \\
	(3). $K_X+B \sim_\mathbb{Q} f^\ast (K_Y+B_Y+M_Y)$.
\end{thm}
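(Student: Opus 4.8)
The plan is to carry out the classical construction of the canonical bundle formula (Kawamata, Fujino--Mori) in the lc generality of Ambro and Fujino--Gongyo. First I would build the \emph{discriminant divisor}: for each prime divisor $P$ on $Y$ set
$$
\gamma_P=\sup\{\,t\in\Q : (X,B+t\,f^{\ast}P)\ \text{is sub-lc over the generic point of}\ P\,\},
$$
and put $B_Y:=\sum_P(1-\gamma_P)\,P$. Since $(X,B)$ is lc and $B\ge 0$, all but finitely many $\gamma_P$ equal $1$ and every $\gamma_P$ lies in $[0,1]$, so $B_Y$ is a genuine $\Q$-boundary on $Y$. The hypothesis $K_X+B\sim_\Q 0/Y$ provides a $\Q$-Cartier divisor $L_Y$ on $Y$ with $K_X+B\sim_\Q f^{\ast}L_Y$, and I would then \emph{define} the moduli divisor by $M_Y:=L_Y-(K_Y+B_Y)$, which gives property (3) at once. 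Here $M_Y$ is only well defined up to $\Q$-linear equivalence (the ambiguity in $L_Y$), which is harmless for the statement.

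Next, for property (1), I would pass to a sufficiently high birational model. Choose a log resolution of $(X,B)$ together with a birational morphism $\mu\colon Y'\to Y$ so that the induced $f'\colon X'\to Y'$ is ``prepared'': the vertical part of the boundary and the non-klt locus lie in simple normal crossing position with the fibres, and every relevant stratum dominates a stratum of $Y'$. Re-running the discriminant/moduli recipe for $f'$ yields $B_{Y'}$ and a $\Q$-divisor $M_{Y'}$ with $\mu^{\ast}(K_Y+B_Y+M_Y)\sim_\Q K_{Y'}+B_{Y'}+M_{Y'}$; the discriminant part stabilizes ($B_{Y'}$ is the birational transform of $B_Y$ plus correction along $\mu$-exceptional divisors), so the moduli classes patch into a b-$\Q$-divisor with trace $M_{Y'}$ on the chosen model. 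Generalized lc-ness of $(Y,B_Y+M_Y)$ with data $Y'\to Y$ and $M_{Y'}$ then follows by unwinding the definition: the coefficients of $B_{Y'}$ are $\le 1$ precisely because lc places of $(X,B)$ over codimension-one points of $Y$ match lc places of $(Y,B_{Y'})$.

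The genuinely hard step --- and the one I would defer to the cited literature --- is property (2): on a high enough model the moduli part is \emph{nef and abundant}. Over the open set $U\subset Y'$ where $f'$ is smooth and $B$ is relatively snc, after a Kawamata-type cover clearing the denominators of $B$ along the general fibre, the restriction $M_{Y'}|_U$ is, up to $\Q$-linear equivalence, the divisor attached to the bottom Hodge-filtration piece of the polarizable variation of Hodge structure on the middle cohomology of the fibres. The Fujita--Kawamata--Koll\'ar--Nakayama--Fujino semipositivity theorems then show that, after blowing $Y$ up enough --- which is exactly why $Y'$ must be allowed to vary --- the extension of this class to all of $Y'$ is nef. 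Ambro's refinement upgrades nefness to abundance: the moduli b-divisor is pulled back from the analogous Hodge-theoretic object on a moduli space of the fibres, where it is semi-ample over a suitable compactification, so $\kappa(M_{Y'})=\nu(M_{Y'})$, the numerical dimension. I expect this Hodge-theoretic input to be the only real obstacle; the remaining bookkeeping with discriminants and b-divisors is routine, and since the assertion is literally [\ref{Fujino-Gongyo}, Theorem 1.1] and [\ref{Amb}, Theorem 3.3] I would cite those for the analytic details.
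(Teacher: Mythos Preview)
The paper does not prove this theorem at all: it is stated as a quotation of [\ref{Fujino-Gongyo}, Theorem~1.1] and [\ref{Amb}, Theorem~3.3] and used as a black box in the proof of Theorem~\ref{m-thm'}. Your outline of the discriminant/moduli construction and the Hodge-theoretic semipositivity input is an accurate summary of what those references actually do, and your final sentence already acknowledges that you would cite them for the substantive content; so your proposal is correct, just considerably more detailed than the paper's own treatment, which is a bare citation.
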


The following lemma is obvious.

\begin{lem}\label{lem1}
Let $D$ be a pseudo-effective divisor and $P \ge 0$ be an effective divisor on a projective normal variety $X$. If $\kappa(D-\epsilon P) \ge 0$ for some small rational number $\epsilon >0$, then $\kappa(D -\epsilon' P)=\kappa(D)$ for every small rational number $\epsilon' <\epsilon$.  If $\kappa_\sigma(D-\epsilon P) \ge 0$ for some small rational number $\epsilon >0$, then $\kappa_\sigma(D -\epsilon' P)=\kappa_\sigma(D)$ for every small rational number $\epsilon' <\epsilon$. 
\end{lem}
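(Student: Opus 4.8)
The plan is to treat both statements uniformly using the basic behavior of Iitaka and numerical Iitaka dimensions along segments in the big cone of effective-plus-pseudoeffective directions. First I would record the two elementary monotonicity facts that do the work: for a fixed pseudo-effective $D$ and fixed $P \ge 0$, the function $t \mapsto \kappa(D - tP)$ is non-increasing in $t$ on the interval where $D - tP$ is pseudo-effective (since $D - t'P = (D - tP) + (t - t')P$ for $t' < t$, and adding an effective divisor never decreases $\kappa$), and likewise $t \mapsto \kappa_\sigma(D - tP)$ is non-increasing by the same comparison together with the fact that $\kappa_\sigma$ is monotone under addition of effective divisors. Hence on $(0, \epsilon]$ the function $\kappa(D - tP)$ takes a constant value $k$ for $t$ in some (possibly degenerate) subinterval accumulating at $0$; I must show that constant equals $\kappa(D)$.

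The key step is the reverse inequality $\kappa(D) \ge \kappa(D - \epsilon' P)$ for $\epsilon'$ small, which is where the hypothesis $\kappa(D - \epsilon P) \ge 0$ enters. Pick $\epsilon' < \epsilon$ and write $\epsilon'' $ with $\epsilon' < \epsilon'' < \epsilon$, so that $\epsilon' = \lambda \cdot 0 + (1-\lambda)\epsilon''$ for some $\lambda \in (0,1)$; equivalently $D - \epsilon' P = \lambda D + (1-\lambda)(D - \epsilon'' P)$ as $\Q$-divisors after clearing denominators. Since $\kappa(D - \epsilon'' P) \ge 0$ (using monotonicity from $\kappa(D - \epsilon P) \ge 0$), there is an effective divisor $N \sim_\Q D - \epsilon'' P$; then $D - \epsilon' P \sim_\Q \lambda D + (1-\lambda)N$, and for a positive integer $m$ clearing all denominators, $m(D - \epsilon' P) \sim (m\lambda) D + (m(1-\lambda)) N$ with $m(1-\lambda)N$ effective. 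Therefore $h^0(X, m(D-\epsilon' P)) \ge h^0(X, \lceil m\lambda\rceil D)$ up to the usual adjustment, which gives $\kappa(D - \epsilon' P) \ge \kappa(D)$; combined with the monotonicity inequality $\kappa(D - \epsilon' P) \ge \kappa(D - \epsilon P)$ already noted (wait — it is the other direction: $\kappa(D-\epsilon'P)\ge\kappa(D-\epsilon P)$ is automatic, and $\kappa(D-\epsilon'P)\le\kappa(D)$ is automatic), so in fact $\kappa(D-\epsilon'P)=\kappa(D)$. The numerical statement runs identically: $\kappa_\sigma$ is likewise non-increasing under adding effective divisors and satisfies $\kappa_\sigma(\lambda D + (1-\lambda)N) \ge \kappa_\sigma(\lambda D) = \kappa_\sigma(D)$ via Nakayama's properties of $\kappa_\sigma$ (sub-additivity / monotonicity, [\ref{Nakayama}]), so the same convex-combination trick yields $\kappa_\sigma(D - \epsilon' P) = \kappa_\sigma(D)$.

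The only genuine subtlety, and the step I would be most careful about, is making the convex-combination argument numerically honest: one needs that $\kappa(D)$ (resp. $\kappa_\sigma(D)$) for a merely pseudo-effective $\Q$-Cartier $D$ is insensitive to $\Q$-linear equivalence and behaves well under the scaling $D \mapsto \lambda D$ with $\lambda$ rational — which is standard — and that passing to a common multiple $m$ to clear denominators does not lose anything, which is exactly the definition of these invariants via $\limsup$ over $m$. There is no need to invoke any deep input here; the lemma is, as the paper says, obvious, and the write-up should simply spell out monotonicity plus the one-line effective-representative argument. I would present it in at most a short paragraph in the final text.
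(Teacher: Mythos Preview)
Your argument is correct and is precisely the standard way to unpack what the paper leaves as ``obvious'' (the paper gives no proof at all for this lemma). The two ingredients are exactly what you identify: monotonicity of $\kappa$ and $\kappa_\sigma$ under addition of effective divisors gives $\kappa(D-\epsilon'P)\le\kappa(D)$, while the convex-combination trick $D-\epsilon'P\sim_\Q \lambda D+(1-\lambda)N$ with $N\ge 0$ an effective representative of $D-\epsilon''P$ gives $\kappa(D-\epsilon'P)\ge\kappa(\lambda D)=\kappa(D)$.

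One expository point: your write-up contains a real-time self-correction in which you first label the \emph{automatic} inequality $\kappa(D)\ge\kappa(D-\epsilon'P)$ as ``the key step'' and then catch yourself in a parenthetical. In the final version this should be cleaned up: the nontrivial direction (the one actually requiring the hypothesis $\kappa(D-\epsilon P)\ge 0$) is $\kappa(D-\epsilon'P)\ge\kappa(D)$, and that is what your effective-representative argument proves. State the two directions in the correct order and drop the ``wait'' aside. Also, the phrase ``up to the usual adjustment'' for the $h^0$ comparison is fine for a sketch but could be made precise by simply noting $|m\lambda D|+m(1-\lambda)N\subset |m(D-\epsilon'P)|$ for $m$ sufficiently divisible, which gives the inequality of Iitaka dimensions directly without any rounding issues.
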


\begin{lem}[Nonvanishing]\label{nonvanishing}
	Let $f: (X,B) \rightarrow Z$ be a surjective morphism from a projective lc pair $(X,B)$ to a normal variety $Z$. Assume that $Z$ has maximal Albanese dimension. If $\kappa(K_X+B/Z) \geq 0$, then $\kappa(K_X+B) \geq 0$.
\end{lem}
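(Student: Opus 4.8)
The plan is to use a canonical bundle formula to reduce the lc problem to a nonvanishing statement for a klt (polarized) pair whose adjoint divisor is big over the base, and then to deduce the latter from the generic vanishing/positivity results of Cao--P\v{a}un [\ref{CP}] and Hacon--Popa--Schnell [\ref{HPS}] over abelian varieties, in the spirit of Birkar--Chen [\ref{BC}]. Concretely, I would first apply Theorem \ref{c-bdle-form} to $f$, obtaining $\pi\colon X'\to X$, $f'\colon X'\to Y$ and $\mu\colon Y\to Z$, a dlt polarized pair $(Y,B_Y+M_Y)$ with $K_Y+B_Y+M_Y$ big$/Z$, and an effective $R\ge 0$ with $f'_\ast\mathcal O_{X'}(\lfloor iR\rfloor)\cong\mathcal O_Y$ for all $i$ and $K_{X'}+B'\sim_\Q f'^\ast(K_Y+B_Y+M_Y)+R$. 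Combining $\pi_\ast\mathcal O_{X'}(m(K_{X'}+B'))\cong\mathcal O_X(m(K_X+B))$ with the projection formula and property (4) gives $H^0(X,m(K_X+B))=H^0(Y,m(K_Y+B_Y+M_Y))$ for $m$ sufficiently divisible, so $\kappa(K_X+B)=\kappa(K_Y+B_Y+M_Y)$ and it is enough to prove $\kappa(K_Y+B_Y+M_Y)\ge 0$. Next, following Remark \ref{rem-klt} (which uses Lemma \ref{lem1}), I would perturb the vertical part $P'$ of $\lfloor B'\rfloor$: for small $\epsilon>0$ one gets a klt polarized pair $(Y,B_Y^\epsilon+M_Y^\epsilon)$ (so $(Y,B_Y^\epsilon)$ is klt and $M_Y^\epsilon$ is nef) with $K_Y+B_Y^\epsilon+M_Y^\epsilon$ still big$/Z$, and, by the same computation, $\kappa(K_Y+B_Y^\epsilon+M_Y^\epsilon)=\kappa(K_{X'}+B'-\epsilon P')\le\kappa(K_{X'}+B')=\kappa(K_X+B)$ since $B'-\epsilon P'\le B'$. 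Hence it suffices to show $\kappa(K_Y+B_Y^\epsilon+M_Y^\epsilon)\ge 0$.

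Then I would pass to the abelian variety. After replacing $Z$ and $Y$ by suitable birational models I may assume $Z$ is smooth; let $a\colon Z\to A=\mathrm{Alb}(Z)$ be the Albanese map, generically finite onto its image because $Z$ has maximal Albanese dimension, and set $g:=a\circ\mu\colon Y\to A$. Since $a$ is generically finite, a general fibre of $g$ is a finite disjoint union of general fibres of $\mu$, so $K_Y+B_Y^\epsilon+M_Y^\epsilon$ is still big over $A$ (equivalently over $V:=\overline{g(Y)}$) and the general fibre $G$ of $g$ has $\kappa((K_Y+B_Y^\epsilon+M_Y^\epsilon)|_G)=\dim G\ge 0$. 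Using $K_A\sim 0$, the sheaf $g_\ast\mathcal O_Y(m(K_Y+B_Y^\epsilon+M_Y^\epsilon))$ is nonzero for suitable $m$ and is a GV-sheaf on $A$, and the description of its cohomological support loci then yields $\kappa(K_Y+B_Y^\epsilon+M_Y^\epsilon)\ge\kappa(G)+\kappa(V)\ge 0$; for honest klt pairs this is precisely the content of [\ref{CP}], [\ref{HPS}] (and of [\ref{BC}] over bases of maximal Albanese dimension), and I would invoke the same machinery here. This gives $\kappa(K_X+B)\ge 0$.

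The hard part is the moduli part $M_Y^\epsilon$: it is only nef, and a priori neither abundant nor $\Q$-linearly equivalent to an effective divisor, so I cannot simply absorb it into the boundary and quote the Birkar--Chen theorem [\ref{BC}] for genuine klt pairs. (Had $K_X+B$ been relatively abundant over $Z$ one could instead arrange $M_Y^\epsilon$ to be nef and abundant, hence effective, and that shortcut would work; but that stronger hypothesis is not available here.) So the real work is to check that the Cao--P\v{a}un/Hacon--Popa--Schnell package — positivity of direct images of pluri-adjoint sheaves, the GV property, and the structure of the loci $V^0$ on $A$ — goes through with the nef polarization $M_Y^\epsilon$ present; an alternative is to first run the relative generalized MMP over $Z$ (available by [\ref{B-lc-flips}], [\ref{BZ}]) to replace $(Y,B_Y^\epsilon+M_Y^\epsilon)$ by a good minimal model over $Z$ on which $K_Y+B_Y^\epsilon+M_Y^\epsilon$ is semi-ample$/Z$, and then to argue on the resulting ample model over $Z$. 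In either route, controlling this nef-but-possibly-non-semiample moduli part is the crux.
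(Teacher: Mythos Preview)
Your reduction is exactly the paper's: apply Theorem \ref{c-bdle-form} and then Remark \ref{rem-klt} to land on a klt polarized pair $(Y',B_{Y'}^\epsilon+M_{Y'}^\epsilon)$ with $M_{Y'}^\epsilon$ nef and $K_{Y'}+B_{Y'}^\epsilon+M_{Y'}^\epsilon$ big over $Z$, and observe $\kappa(K_X+B)\ge\kappa(K_{Y'}+B_{Y'}^\epsilon+M_{Y'}^\epsilon)$.

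Where you diverge is in your assessment of what remains. You treat the nef-but-not-abundant moduli part $M_{Y'}^\epsilon$ as the crux and propose either to redo the GV-sheaf/positivity machinery for adjoint-plus-nef bundles or to run a generalized MMP over $Z$ first. But this is precisely what [\ref{BC}, Theorem 4.1] already does: that theorem is \emph{not} restricted to genuine klt pairs, it is a weak nonvanishing for a klt pair $(Y,\Gamma)$ together with a nef $\Q$-divisor $M$ such that $K_Y+\Gamma+M$ is big over a variety of maximal Albanese dimension, and it outputs $D\ge 0$ with $D\equiv K_Y+\Gamma+M$. The paper simply cites this, and then invokes [\ref{CKP}] (Campana--Koziarz--P\v{a}un) to pass from numerical effectivity of $K_{Y'}+B_{Y'}^\epsilon+M_{Y'}^\epsilon$ to $\kappa\ge 0$. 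So the ``hard part'' you isolate is already packaged in the literature; after your (correct) reduction the paper's proof is a two-line citation. Your alternative of running the relative MMP over $Z$ to make $K_Y+B_Y^\epsilon+M_Y^\epsilon$ semi-ample$/Z$ and then arguing on the ample model is in fact close to how [\ref{BC}, Theorem 4.1] is proved, so you would be reproving that theorem rather than bypassing it.
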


\begin{proof}
Replacing $(X,B)$ with a dlt blow-up we can assume that $(X,B)$ is $\mathbb{Q}$-factorial dlt. Let $g: X \dashrightarrow Y$ be a relative Iitaka fibration of $K_X+B$ over $Z$. We now apply Theorem \ref{c-bdle-form} to obtain a commutative diagram 
$$
\xymatrix{
	X \ar@{-->}[d]_{g} &   & X' \ar[d]^{g'}\ar[ll]^{\pi} \\
	Y   & & Y' \ar[ll]^{\mu}  &}
$$
a $\mathbb{Q}$-factorial dlt pair $(X',B')$ and a $\mathbb{Q}$-factorial dlt polarized pair $(Y',B_{Y'}+M_{Y'})$ such that $K_{X'}+B'\sim_{\mathbb{Q}}g'^\ast(K_{Y'}+B_{Y'}+M_{Y'}) +R$ and $K_{Y'}+B_{Y'}+M_{Y'}$ is big$/Z$. Let $P'$ be the vertical$/Y'$ part of $\lfloor B'\rfloor$. As we discussed in Remark \ref{rem-klt}, if we apply Theorem \ref{c-bdle-form} to $(X',B' -\epsilon P')$, then we obtain a klt polarized pair $(Y',B_{Y'}^\epsilon +M_{Y'}^\epsilon )$ such that $K_{Y'}+B_{Y'}^\epsilon +M_{Y'}^\epsilon  $ is big$/Z$ when $\epsilon>0$ is sufficiently small. By [\ref{BC}, Theorem 4.1] and [\ref{CKP}] we have $\kappa(K_{Y'}+B_{Y'}+M_{Y'}) \geq \kappa(K_{Y'}+B_{Y'}^\epsilon +M_{Y'}^\epsilon) \geq 0$ which in turn implies that $\kappa(K_X+B) \geq 0$.
\end{proof}

\begin{thm}\label{m-thm'}
Let $f:(X,B) \rightarrow Z$ be a surjective morphism from a projective lc pair $(X,B)$ to a normal variety $Z$. Assume that $Z$ has maximal Albanese dimension and that $K_X+B$ is relatively abundant over $Z$. Then, $K_X+B$ is abundant. Moreover, the subadditvity of Kodaira dimensions $\kappa(K_X+B) \geq \kappa(K_F+B_F)+ \kappa(Z)$ holds, where $F$ is a general fiber, $K_F+B_F= (K_X+B)|_F$, and $\kappa(Z)$ means the Kodaira dimension of a smooth model of $Z$. 
\end{thm}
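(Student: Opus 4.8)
The plan is to reduce to the known klt case of Birkar–Chen by combining the two canonical bundle formulas stated above. First I would replace $(X,B)$ by a $\Q$-factorial dlt model, and then by a relative Iitaka fibration apply Theorem \ref{c-bdle-form} to produce the commutative diagram $X \leftarrow X' \to Y \to Z$ together with a $\Q$-factorial dlt polarized pair $(Y, B_Y + M_Y)$ with $K_Y + B_Y + M_Y$ big$/Z$ and $K_{X'} + B' \sim_\Q f'^*(K_Y + B_Y + M_Y) + R$, where $R \ge 0$ is the effective divisor of the formula. Since $\kappa$ and $\kappa_\sigma$ are birational invariants and are unaffected by adding $R$ (because $f'_* \mathcal{O}_{X'}(\lfloor iR \rfloor) \cong \mathcal{O}_Y$), it suffices to prove that $K_Y + B_Y + M_Y$ is abundant and that the subadditivity holds for it over $Z$; note that $Y \to Z$ is generically finite onto its image precisely when $K_X + B$ is big$/Z$, but in general $\dim Y > \dim Z$ is allowed. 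The key extra input is the relative abundance hypothesis: $\kappa(K_X+B/Z) = \kappa_\sigma(K_X+B/Z)$, which translates under the formula into $K_Y + B_Y + M_Y$ being abundant$/Z$.

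The heart of the argument is then to upgrade the moduli part. As in Remark \ref{rem-klt}, let $P'$ be the vertical$/Y$ part of $\lfloor B' \rfloor$ and run the canonical bundle formula on $(X', B' - \epsilon P')$ for small $\epsilon > 0$; since $K_{X'} + B' - \epsilon P' \sim_\Q 0$ over $Y$ after a further relative Iitaka contraction, Theorem \ref{c-bdle-form-2} applies on that contraction and produces a moduli part that is \emph{nef and abundant}, and by the construction of \ref{FM} the pair $(Y, B_Y^\epsilon + M_Y^\epsilon)$ becomes klt with $K_Y + B_Y^\epsilon + M_Y^\epsilon$ still big$/Z$ (using $K_X+B$ abundant$/Z$ rather than just big$/Z$ is what forces $M'$ to be not merely nef but abundant). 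Now I would perturb the boundary coefficients of $B_Y^\epsilon$ slightly to arrange a genuine klt \emph{big-over-$Z$} polarized pair with abundant moduli part; Lemma \ref{lem1} guarantees $\kappa$ and $\kappa_\sigma$ are unchanged by these perturbations, so abundance of the perturbed invariant is equivalent to abundance of $K_X+B$.

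At this stage I would invoke [\ref{BC}, Theorem 4.1] (together with [\ref{CKP}] for the $\kappa_\sigma$ side, exactly as in the proof of Lemma \ref{nonvanishing}): for a klt polarized pair over a variety of maximal Albanese dimension, with the log divisor big$/Z$ and abundant moduli part, one gets abundance and the subadditivity $\kappa \ge \kappa_F + \kappa(Z)$. Tracing the (in)equalities back through the perturbations, through Theorem \ref{c-bdle-form-2}, through the addition of $R$, and through the birational models yields the statement for $K_X + B$. The main obstacle I expect is the transition from "nef" to "nef and abundant" for the moduli parts: one must check that the abundance of $M'$ (equivalently $M_Y'$) really follows from the relative abundance of $K_X+B$ and survives the iterated applications of the canonical bundle formula and the coefficient perturbations — this is where the hypotheses are used most delicately, and where one must be careful that the klt-ification procedure of \ref{FM} does not destroy abundance of the moduli divisor.
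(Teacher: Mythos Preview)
Your overall strategy---reduce to the klt case via the two canonical bundle formulas and then invoke Birkar--Chen---matches the paper's. But the execution has a genuine gap precisely at the step you flag as the ``main obstacle'': you write that ``$K_{X'}+B'-\epsilon P' \sim_\Q 0$ over $Y$ after a further relative Iitaka contraction'', and then apply Theorem \ref{c-bdle-form-2}. That theorem requires an honest lc-trivial fibration, i.e.\ $K_X+B \sim_\Q 0/Y$, and no ``Iitaka contraction'' produces this automatically: after Theorem \ref{c-bdle-form} you only know $K_{X'}+B' \sim_\Q R$ over $Y$ with $R\ge 0$ and $\kappa(K_G+B_G)=0$ on the generic fibre $G$, which is far from $\sim_\Q 0$. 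The paper closes this gap with real work: from relative abundance over $Z$ it deduces via [\ref{Lehmann}, Theorem 6.1] that $\kappa_\sigma(K_X+B/Y)=\kappa(K_X+B/Y)=0$, so $K_G+B_G \equiv N_\sigma(K_G+B_G)$; then an LMMP$/Y$ with scaling (using [\ref{BH-I}], [\ref{Gongyo}], [\ref{Kawamata}]) reaches a model with $K_X+B \sim_{\Q,U} 0$ over an open $U\subset Y$; and a second LMMP$/Y$ on $K_X+B-\epsilon P$ (terminating by [\ref{B-lc-flips}]) yields a good minimal model $h:X'\to T$ with $T\to Y$ birational. Only now does $(X',B'-\epsilon P')\to T$ become an lc-trivial fibration to which Theorem \ref{c-bdle-form-2} applies, giving a generalized klt pair $(T,\Delta_T+M_T)$ with abundant moduli part and $K_T+\Delta_T+M_T$ big$/Z$.

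Two smaller corrections. First, along the way you need $\kappa(K_X+B-\epsilon P)\ge 0$ before you can invoke Lemma \ref{lem1}; the paper verifies this by showing $\kappa(K_X+B-\epsilon P/Z)\ge 0$ (via Theorem \ref{c-bdle-form} and the bigness of $K_{Y'}+B_{Y'}-\epsilon D_{Y'}+M_{Y'}$ over $Z$) and then applying Lemma \ref{nonvanishing}. Second, at the end you cite [\ref{BC}, Theorem 4.1] with [\ref{CKP}], but that combination only gives nonvanishing (as in Lemma \ref{nonvanishing}); the conclusion you need---abundance and subadditivity for a klt pair big over $Z$---is [\ref{BC}, Theorem 1.1], applied after writing $K_T+\Delta_T+M_T \sim_\Q K_T+B_T$ for some klt boundary $B_T$ using the abundant moduli part.
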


\begin{proof}
Replacing $(X,B)$ with a dlt blow-up we can assume that $(X,B)$ is $\mathbb{Q}$-factorial dlt. Let $g: X \dashrightarrow Y$ be a relative Iitaka fibration of $K_X+B$ over $Z$. By replacing $(X,B)$ with a suitable dlt blow-up again we can assume that $g$ is a morphism. Since $K_X+B$ is relatively abundant over $Z$, we have that $\kappa_\sigma (K_X+B/Y) =\kappa(K_X+B/Y)=0$ by [\ref{Lehmann}, Theorem 6.1] (note that the theorem we referred to also applies to singular varieties as we can lift divisors to smooth models) and hence $\kappa_\sigma (K_G+B_G) =\kappa(K_G+B_G)=0$ where $G$ is the generic fiber of $g$. In particular, $K_G+B_G \equiv N_\sigma(K_G+B_G)$. Now run an LMMP$/Y$ on $K_X+B$ with scaling of some ample divisor. Thanks to [\ref{BH-I}], [\ref{Gongyo}] and [\ref{Kawamata}] this process will reach a model $(X',B')$ such that $K_{G'}+B'_{G'} \sim_\mathbb{Q} 0$ after finite steps where $G'$ is the generic fiber of $g': X' \rightarrow Y$. By replacing $X$, $B$ and $G$ with $X'$, $B'$ and $G'$, we can assume that there exists a nonempty open subset $U$ of $Y$ such that $K_X+B \sim_{\mathbb{Q},U} 0$.  

Let $P$ be the vertical$/Y$ part of $\lfloor B \rfloor$, and let $\epsilon>0$ be a sufficiently small number. We claim that $\kappa(K_{X}+B)=\kappa(K_{X}+B-\epsilon P)$ and $\kappa_\sigma(K_{X}+B)=\kappa_\sigma(K_{X}+B-\epsilon P)$. Thanks to Theorem \ref{c-bdle-form} there exist a birational model $\pi: X'' \rightarrow X$ and a birational model $\mu: Y' \rightarrow Y$ together with a morphism $g'':X'' \rightarrow Y'$
$$
\xymatrix{
	X'' \ar[rr]^{\pi}\ar[d]_{g''} &   & X \ar[d]^{g} \\
	Y'  \ar[rr]^{\mu} & & Y   &}
$$
such that 
$$
\pi^\ast (K_X+B) + E \sim_\mathbb{Q} g''^\ast (K_{Y'}+B_{Y'}+M_{Y'})+R
$$ 
where $E$ is exceptional$/X$ and $K_{Y'}+B_{Y'}+M_{Y'}$ is big$/Z$. Because $P$ is vertical$/Y$, $\pi^\ast P$ is vertical$/Y'$. Pick a $\mathbb{Q}$-divisor $D_{Y'}$ on $Y'$ such that $g''^\ast D_{Y'} \geq \pi^\ast P$. Note that $K_{Y'}+B_{Y'} -\epsilon D_{Y'} +M_{Y'}$ is big$/Z$ since $\epsilon$ is sufficiently small, and hence $\kappa(K_X+B-\epsilon P/Z)=\kappa(K_X+B/Z) \ge 0$. By Lemma  \ref{nonvanishing} one deduces that $\kappa(K_X+B-\epsilon P)\ge 0$, and by Lemma \ref{lem1} one deduces the claim.

Now we run an LMMP$/Y$ on $K_X+B- \epsilon P$ which terminates at a good minimal model 
$$
\xymatrix{
	X' \ar[rr]^{h}\ar[dr]_{g'} &   & T \ar[dl]^{\mu} \\
	& Y &} 
$$
by [\ref{B-lc-flips}, Theorem 1.4]. Because $K_X+B \sim_{\mathbb{Q},U} 0$, it is clear that $\kappa(K_{X'}+B')=\kappa(K_{X'}+B'-\epsilon P')$ and $\kappa_\sigma(K_{X'}+B')=\kappa_\sigma(K_{X'}+B'-\epsilon P')$ where $B'$ and $P'$ are the birational transforms of $B$ and $P$ respectively. Moreover, $\mu: T \rightarrow Y$ is birational. 
It suffices to show that $K_{X'}+B'-\epsilon P'$ is abundant, namely $\kappa (K_{X'}+B'-\epsilon P')=\kappa_\sigma(K_{X'}+B'-\epsilon P')$. To this end we apply Theorem \ref{c-bdle-form-2} on $(X',B'-\epsilon P')$ and get a generalized klt generalized pair $(T,\Delta_{T}+M_T)$ with the abundant moduli part such that 
$$
K_{X'}+B'-\epsilon P' \sim_\mathbb{Q} h^\ast(K_T+B_T+M_T). 
$$
Because $K_T+\Delta_T+M_T$ is big$/Z$ and there exists a boundary $B_T$ such that $(T,B_T)$ is klt and $K_T +B_T \sim_\Q K_T+\Delta_T+M_T$, we can apply [\ref{BC}, Theorem 1.1] to achieve the conclusion. In particular, we obtain
\begin{align*}
\kappa(K_X+B) &= \kappa (K_{X'}+B' -\epsilon P') \\
 & =\kappa(K_T+B_T) \\
 &\geq \dim T- \dim Z+ \kappa(Z) \\
  &= \kappa (K_F+B_F) + \kappa(Z).
\end{align*} 
\end{proof}

\begin{rem}
We point out that a crucial step in the argument in [\ref{BC}] relies on the extension theorem from [\ref{DHP}] which was obtained via an analytic method. So, both Theorem \ref{m-thm} and [\ref{BC}] do not have a pure algebraic proof at this point. One wonders if these theorems can be argued in a parallel way as in [\ref{CH}].
\end{rem}
Next we prove some corollaries.

\begin{cor}\label{m-cor'}
	Let $f:(X,B) \rightarrow A$ be a morphism from a projective lc pair $(X,B)$ to an abelian variety $A$. Assume that $(X,B)$ relatively has a good minimal model over $A$. Then, $(X,B)$ has a good minimal model. In particular, if $K_X+B$ is semi-ample$/A$, then it is semi-ample.
\end{cor}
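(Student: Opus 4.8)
The plan is to derive this from Theorem~\ref{m-thm'} together with the special geometry of maps to abelian varieties. Since an abelian variety has maximal Albanese dimension, and since a good minimal model of a good minimal model of $(X,B)$ is again a good minimal model of $(X,B)$, I would first replace $(X,B)$ by its relative good minimal model over $A$ and thereby assume in addition that $(X,B)$ is $\Q$-factorial dlt and that $K_X+B$ is semi-ample$/A$, in particular relatively abundant over $A$. Theorem~\ref{m-thm'}, applied to $f\colon(X,B)\to A$, then gives that $K_X+B$ is abundant (and that the subadditivity of Kodaira dimensions holds). After this reduction it remains only to prove that $K_X+B$ is semi-ample; note that the last sentence of the corollary is precisely the case in which the given pair is already (a $\Q$-factorial dlt modification of) its own relative good minimal model over $A$.

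The next step is to upgrade abundance to nefness and to log abundance. For nefness: were $K_X+B$ not nef, the cone theorem would furnish a $(K_X+B)$-negative extremal ray spanned by a rational curve $C$, whose image in $A$ is a rational curve or a point; but $A$ has no rational curves, so $C$ is contracted over $A$, contradicting that $K_X+B$ is nef$/A$. Hence $K_X+B$ is nef. For log abundance: given an lc center $S$ of $(X,B)$ with normalization $\nu\colon S^\nu\to S$, write $\nu^\ast(K_X+B)=K_{S^\nu}+B_{S^\nu}$ by iterated adjunction, so that $(S^\nu,B_{S^\nu})$ is lc. Taking the Stein factorization $S^\nu\to W\to A$, the normal variety $W$ maps finitely onto a subvariety of the abelian variety $A$ and so has maximal Albanese dimension; moreover $K_{S^\nu}+B_{S^\nu}$, being the restriction of a semi-ample$/A$ divisor, is semi-ample, hence relatively abundant, over $W$. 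Applying Theorem~\ref{m-thm'} to $S^\nu\to W$ shows $K_{S^\nu}+B_{S^\nu}$ is abundant. Therefore $K_X+B$ is nef and log abundant.

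Finally, since $(X,B)$ is dlt and $K_X+B$ is nef and log abundant, $K_X+B$ is semi-ample by the log abundant analogue of Kawamata's semi-ampleness theorem (cf.\ [\ref{Gongyo}], [\ref{Kawamata}]). As $(X,B)$ is $\Q$-factorial dlt with $K_X+B$ nef and satisfies the required discrepancy inequalities, it is then a good minimal model of the original pair, and when $K_X+B$ was semi-ample$/A$ to begin with the same argument shows it is semi-ample. One could also bypass the last citation: in the proof of Theorem~\ref{m-thm'} the pair is reduced by the canonical bundle formulae to a klt pair $(T,B_T)$ with $K_T+B_T$ big$/A$, to which [\ref{BC}, Theorem 1.1] applies and yields a good minimal model of $(T,B_T)$; the identifications of Kodaira and numerical dimensions in that proof then give that $\mathcal{R}(K_X+B)$ is finitely generated, whence $(X,B)$ has a good minimal model as in Proposition~\ref{m-prop}.

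The step I expect to be hardest is this passage from abundance of $K_X+B$ to the existence of a good minimal model: establishing log abundance forces one to control the images in $A$ of every lc center and to re-apply Theorem~\ref{m-thm'} after adjunction, and the nef-and-log-abundant case of the abundance theorem for dlt pairs that is then invoked has, in the non-klt range, a delicate proof (an induction on dimension over the lc centers together with a gluing step along the boundary). A smaller point that must nonetheless be checked is that the relative good minimal model over $A$, once its log canonical divisor has been shown to be nef, is genuinely a log minimal model of the original pair over a point — but this is immediate, since the discrepancy conditions do not refer to the base.
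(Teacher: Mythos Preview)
Your approach is essentially the same as the paper's: replace by the relative good minimal model, obtain global nefness from the absence of rational curves on $A$, apply Theorem~\ref{m-thm'} to the pair and then to each lc center (via Stein factorization) to get log abundance, and conclude semi-ampleness. Two small points: the paper also takes the Stein factorization $X \to Z \to A$ for the main morphism (since $f$ is not assumed surjective and Theorem~\ref{m-thm'} requires a surjection onto a normal variety of maximal Albanese dimension), and the correct reference for ``nef and log abundant implies semi-ample'' in the lc setting is [\ref{Fujino-Gongyo}, Theorem~1.5] together with [\ref{Hacon-Xu-2}], not [\ref{Gongyo}] or [\ref{Kawamata}], which only handle the numerically trivial case.
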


\begin{proof}
	Replacing $(X,B)$ with a good minimal model$/A$ one can assume that $K_X+B$ is semi-ample$/A$ and globally nef (see [\ref{BC}, \S 3]).  Let $X \rightarrow Z \rightarrow A$ be the Stein factorization. It is easy to see that $Z$ has maximal Albanese dimension and $K_X +B$ is semi-ample$/Z$. Therefore $K_X +B$ is globally abundant by Theorem \ref{m-thm}. We can apply a similar argument for every lc center of $(X,B)$ and then obtain that $K_X +B$ is nef and log abundant. Finally we establish the result by [\ref{Fujino-Gongyo}, Theorem 1.5] and [\ref{Hacon-Xu-2}]. 
\end{proof}

We also give a generalization of [\ref{Lai}, Theorem 4.2 and Corollary 4.3] as Corollary \ref{s-cor} and Corollary \ref{s-cor-2}. We present them below for the reader's convenience.

\begin{cor}
	Let $a:(X,B) \rightarrow A$ be the Albanese map from a projective klt pair $(X,B)$ to its Albanese variety $A=\mathrm{Alb}(X)$. Suppose that $(F,B_F)$ has a good minimal model where $F$ is the general fiber and $K_F+B_F=(K_X+B)|_F$. Then, $(X,B)$ has a good minimal model.
\end{cor}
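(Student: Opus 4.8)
The plan is to reduce, by means of a relative minimal model program, to the situation treated in Corollary \ref{m-cor'}.

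First I would replace $a$ by its Stein factorization $X\xrightarrow{f}Z\xrightarrow{g}A$, so that $f$ has connected fibres, $g$ is finite and $Z$ is normal. Since $g$ is finite, a resolution $\widetilde Z\to Z$ carries a generically finite morphism to $A$; by the universal property of the Albanese this factors through $\widetilde Z\to\mathrm{Alb}(\widetilde Z)$, which is therefore generically finite, so $Z$ has maximal Albanese dimension. A connected component of the general fibre of $a$ is a general fibre $F$ of $f$, so the hypothesis says precisely that $(F,B_F)$ has a good minimal model; in particular $\kappa(K_F+B_F)\ge 0$, hence $\kappa(K_X+B/Z)\ge 0$, and Lemma \ref{nonvanishing} gives $\kappa(K_X+B)\ge 0$. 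After replacing $(X,B)$ with a dlt blow-up we may assume it is $\mathbb{Q}$-factorial dlt.

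The central step is to produce a good minimal model of $(X,B)$ over $Z$. Since $(X,B)$ is klt and the generic fibre of $f$ has a good minimal model, the relative minimal model program over $Z$ (with scaling of an ample divisor) terminates at a good minimal model of $(X,B)$ over $Z$; see [\ref{Lai}]. As $g\colon Z\to A$ is finite, a divisor that is semi-ample$/Z$ is semi-ample$/A$, so $(X,B)$ relatively has a good minimal model over the abelian variety $A$, and Corollary \ref{m-cor'} then yields a global good minimal model of $(X,B)$.

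I expect this central step to be the main obstacle: passing from a good minimal model of the general fibre to a relative good minimal model over $Z$ (equivalently over $A$). This is exactly where the statement goes beyond [\ref{Lai}, Theorem 4.2 and Corollary 4.3], where the fibres are of log general type so that $K_X+B$ is big$/Z$ and [\ref{BCHM}] supplies the relative good minimal model at once, whereas here the relative log canonical divisor need not be relatively big and one must run the full relative minimal model program. An alternative, bypassing Corollary \ref{m-cor'}, is to observe that $K_F+B_F$ abundant makes $K_X+B$ relatively abundant over $Z$, so Theorem \ref{m-thm'} already gives that $K_X+B$ is abundant and that $\kappa(K_X+B)\ge\kappa(K_F+B_F)+\kappa(Z)$; one then upgrades abundance to a good minimal model using finite generation of the klt canonical ring [\ref{BCHM}], via an LMMP over the relative Iitaka fibration (using [\ref{Gongyo}]), the canonical bundle formula Theorem \ref{c-bdle-form-2}, and generalized BCHM [\ref{BZ}] — at the cost of reproving that final passage.
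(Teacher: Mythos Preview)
Your primary approach is correct but differs from the paper's. The paper deduces this corollary from Proposition~\ref{m-prop'}: abundance of $K_F+B_F$ passes to the generic fibre, so $K_X+B$ is relatively abundant over $Z$; Theorem~\ref{m-thm'} then gives that $K_X+B$ is abundant; since $(X,B)$ is klt, $\mathcal{R}(K_X+B)$ is finitely generated by [\ref{BCHM}], hence $K_X+B$ birationally admits a Nakayama--Zariski decomposition with semi-ample positive part by [\ref{Lehmann}, Proposition~6.4], and [\ref{BH-I}, Theorem~1.1] produces a good minimal model. Your route instead builds a relative good minimal model over $Z$ and feeds it into Corollary~\ref{m-cor'}. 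This is a legitimate alternative, though the step you flag as the obstacle---existence of a relative good minimal model over $Z$ when the general fibre has one---is not quite in [\ref{Lai}] as stated; it follows for klt pairs from the same package (relative abundance $\Rightarrow$ relative finite generation via [\ref{BCHM}] $\Rightarrow$ [\ref{Lehmann}] $+$ [\ref{BH-I}] applied over $Z$), so in effect both approaches rest on the same ingredients, just assembled in a different order. Your ``alternative'' paragraph is precisely the paper's argument, except that the final passage from abundance to a good minimal model is much simpler than you sketch: no LMMP over the Iitaka fibration, no canonical bundle formula, no generalized BCHM---just finite generation, [\ref{Lehmann}], and [\ref{BH-I}, Theorem~1.1].
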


\begin{cor}
	Let $f:(X,B) \rightarrow Z$ be a surjective morphism from a projective klt pair $(X,B)$ to a normal variety $Z$. Suppose that $Z$ has maximal Albanese dimension, and that $(F,B_F)$ has a good minimal model where $F$ is the general fiber and $K_F+B_F=(K_X+B)|_F$. Then, $(X,B)$ has a good minimal model.
\end{cor}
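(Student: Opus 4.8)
This is the klt case of the lc statement Proposition \ref{m-prop} announced in the introduction, so one clean option is simply to quote that (a klt pair is lc, and by [\ref{BCHM}] its canonical ring $\mathcal{R}(K_X+B)$ is finitely generated, which is precisely the extra hypothesis under which Proposition \ref{m-prop} delivers a good minimal model). Below I spell out the mechanism, which runs parallel to the proof of Theorem \ref{m-thm'}.

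First I would replace $(X,B)$ by a $\Q$-factorialization (a dlt blow-up): this keeps $(X,B)$ klt and $\Q$-factorial and affects neither the hypothesis nor the conclusion. Since $(F,B_F)$ has a good minimal model, $K_F+B_F$ is abundant; as the relative Kodaira dimension and the relative numerical dimension are computed on a general fibre, $K_X+B$ is relatively abundant over $Z$. Because $Z$ has maximal Albanese dimension, Theorem \ref{m-thm'} now applies and shows that $K_X+B$ is abundant, together with the subadditivity $\kappa(K_X+B)\ge\kappa(K_F+B_F)+\kappa(Z)$.

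It remains to promote abundance to a good minimal model. Here I would use that $(X,B)$ is klt, so by [\ref{BCHM}] the ample model $h\colon X\bir T$ of $K_X+B$ exists with $\dim T=\kappa(K_X+B)=\kappa_\sigma(K_X+B)$; passing to a model on which $h$ is a morphism, the restriction of $K_X+B$ to a general fibre of $h$ then has numerical dimension zero (by [\ref{Lehmann}, Theorem 6.1], exactly as in the proof of Theorem \ref{m-thm'}). Running an LMMP$/T$ on $K_X+B$ with scaling of an ample divisor terminates — by [\ref{BH-I}], [\ref{Gongyo}], [\ref{Kawamata}] — at a model on which $K+B$ is numerically trivial over $T$; since $T$ remains the ample model, on this model $K+B\sim_\Q$ the pullback of an ample divisor on $T$, hence is semi-ample, and we have reached a good minimal model of $(X,B)$.

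The step I expect to be the main obstacle is the last one — extracting a genuine good minimal model from numerical abundance of $K_X+B$. For klt pairs (and, more generally, whenever $\mathcal{R}(K_X+B)$ is finitely generated or $\kappa(K_X+B)=0$) this is available from finite generation together with the known minimal-model and abundance theory for pairs whose log canonical divisor has numerical dimension zero over a base; for arbitrary lc pairs it is open, which is precisely why the good-minimal-model conclusion of the lc Proposition \ref{m-prop} is stated only conditionally.
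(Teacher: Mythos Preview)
Your reduction to Proposition \ref{m-prop'} is exactly what the paper does: Corollary \ref{s-cor-2} is not given a separate proof but is subsumed by the lc statement, the klt hypothesis supplying finite generation of $\mathcal{R}(K_X+B)$ via [\ref{BCHM}]. Your first step---passing from a good minimal model on the general fibre to relative abundance of $K_X+B$ over $Z$ and then invoking Theorem \ref{m-thm'} to obtain global abundance---also matches the paper's argument for Proposition \ref{m-prop'}.

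The difference lies in how you promote abundance plus finite generation to a good minimal model. The paper does this in one stroke: by [\ref{Lehmann}, Proposition 6.4], finite generation together with $\kappa=\kappa_\sigma$ forces the asymptotic fixed part of $K_X+B$ (on a resolution) to coincide with $N_\sigma$, so $P_\sigma$ is semi-ample; then [\ref{BH-I}, Theorem 1.1] produces a good minimal model directly. Your route---running an LMMP over the ample model $T$---is more hands-on and is essentially the mechanism behind Lemma \ref{lc-m-g-model} later in the paper. As written, however, it has a small gap: the references [\ref{BH-I}], [\ref{Gongyo}], [\ref{Kawamata}] (used exactly this way in the proof of Theorem \ref{m-thm'}) only guarantee that after finitely many steps the LMMP$/T$ reaches a model on which $K+B\sim_\Q 0$ on the \emph{generic} fibre of $X'\to T$; they do not by themselves yield termination of the full LMMP$/T$, nor that $K+B$ becomes globally the pullback of an ample divisor on $T$. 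To close this you must still argue, as in the proof of Lemma \ref{lc-m-g-model}, that the residual vertical part is very exceptional over $T$ (via [\ref{B-lc-flips}, Lemma 3.2]) and is therefore contracted. Once you add that, you have essentially reproved the package [\ref{Lehmann}, Proposition 6.4] $+$ [\ref{BH-I}, Theorem 1.1] that the paper simply quotes.
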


In fact one can prove a stronger version for lc pairs instead of klt pairs.

\begin{prop}\label{m-prop'}
	Let $f:(X,B) \rightarrow Z$ be a surjective morphism from a projective lc pair $(X,B)$ to a normal variety $Z$. Suppose that $Z$ has maximal Albanese dimension, and that $(F,B_F)$ has a good minimal model where $F$ is the general fiber and $K_F+B_F=(K_X+B)|_F$. Then, $K_X+B$ is abundant. Moreover, if the canonical ring $\mathcal{R}(K_X+B)$ is finitely generated, for example, if $(X,B)$ is klt, or $\kappa(K_X+B)=0$, then $(X,B)$ has a good minimal model. 
\end{prop}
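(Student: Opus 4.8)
The plan is to read the first assertion off Theorem \ref{m-thm'}, and then to upgrade the relative good minimal model furnished by the hypothesis on $F$ into a global one. For the first assertion, note that having a good minimal model is strictly stronger than being relatively abundant: if $(F,B_F)$ has a good minimal model then $K_F+B_F$ is semi-ample, hence abundant, on that model, and since both the Iitaka dimension and the numerical dimension are unchanged under the birational contractions of an MMP (the corrections to the log discrepancies being absorbed by $N_\sigma$), one has $\kappa(K_X+B/Z)=\kappa(K_F+B_F)=\kappa_\sigma(K_F+B_F)=\kappa_\sigma(K_X+B/Z)$. Thus $K_X+B$ is relatively abundant over $Z$, and Theorem \ref{m-thm'} applies to give that $K_X+B$ is abundant (and, incidentally, the subadditivity inequality, which is not needed here).

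For the ``moreover'' part, assume in addition that $\mathcal{R}(K_X+B)$ is finitely generated; recall this is automatic when $(X,B)$ is klt by [\ref{BCHM}], and when $\kappa(K_X+B)=0$. First I would replace $(X,B)$ by a $\mathbb{Q}$-factorial dlt model, which alters neither the abundance of $K_X+B$, nor the finite generation of $\mathcal{R}(K_X+B)$, nor the existence of a good minimal model. Since $(F,B_F)$ has a good minimal model, an LMMP$/Z$ on $K_X+B$ with scaling of an ample divisor terminates with a good minimal model over $Z$ by [\ref{B-lc-flips}, Theorem 1.4] --- precisely the input already used in the proof of Theorem \ref{m-thm'}. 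Replacing $(X,B)$ by this relative model (which preserves $\kappa$, $\kappa_\sigma$, and $\mathcal{R}(K_X+B)$), I may therefore assume that $K_X+B$ is semi-ample$/Z$ and, by the first part, abundant.

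It remains to promote this to a global good minimal model, and this is the step I expect to be the main obstacle. Finite generation of $\mathcal{R}(K_X+B)$ provides the lc model $\phi\colon X\bir T$; choosing a common log resolution $X\xleftarrow{p}W\xrightarrow{q}T$ with $(W,B_W)$ a log smooth model of $(X,B)$ and writing $K_W+B_W=p^\ast(K_X+B)+G$ with $0\le G$ exceptional$/X$ and $p^\ast(K_X+B)\sim_{\mathbb{Q}}q^\ast H+E$ with $H$ ample on $T$ and $0\le E$, the aim is to show that $q^\ast H$ is exactly the positive part $P_\sigma(K_W+B_W)$, hence semi-ample, while $E+G$ is the negative part; an LMMP on $K_W+B_W$ then contracts precisely $N_\sigma$ and, finite generation supplying $T$ as its target, terminates with a good minimal model of $(W,B_W)$, hence of $(X,B)$. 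The semi-ampleness of the positive part I would get by rerunning the canonical bundle formulae of Theorems \ref{c-bdle-form} and \ref{c-bdle-form-2}, reducing as in the proof of Theorem \ref{m-thm'} to a klt pair on the base of the relative Iitaka fibration which is big over $Z$, invoking [\ref{BC}, Theorem 1.1] --- which delivers not merely abundance but an honest good minimal model --- and pulling that back. The genuinely delicate point, and the heart of the matter, is the termination of this final, global, LMMP, equivalently the verification that the lc model $T$ is reached by a sequence of MMP steps: for klt pairs this rests on [\ref{BCHM}]; in the case $\kappa(K_X+B)=0$ it follows from abundance for numerically trivial lc divisors; and in the general finitely generated case one must combine finite generation with the semi-ampleness of the positive part just obtained, passing if necessary through the ``nef and abundant implies semi-ample'' principle of [\ref{Fujino-Gongyo}] and checking log abundance with the help of the maximal Albanese dimension of $Z$, as in the proof of Corollary \ref{m-cor'}.
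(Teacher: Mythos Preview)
Your first paragraph is correct and matches the paper: the good minimal model hypothesis on $(F,B_F)$ gives relative abundance, and Theorem \ref{m-thm'} then yields global abundance. (The paper phrases the passage from general to generic fibre via a semi-continuity argument, but this is a minor stylistic difference.)

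For the ``moreover'' part, however, you are working far too hard. The paper's proof is two lines: once $K_X+B$ is abundant and $\mathcal{R}(K_X+B)$ is finitely generated, [\ref{Lehmann}, Proposition 6.4] gives directly that on a suitable resolution the Nakayama--Zariski decomposition has semi-ample positive part (finite generation supplies a decomposition $P+N$ with $P$ semi-ample and $N$ the asymptotic fixed part; abundance forces $N=N_\sigma$). Then [\ref{BH-I}, Theorem 1.1] immediately produces a good minimal model. There is no need to first run a relative LMMP over $Z$, no need to rerun the canonical bundle formulae or invoke [\ref{BC}] again to identify $P_\sigma$, and --- crucially --- the ``genuinely delicate'' termination issue you flag is entirely absorbed by [\ref{BH-I}, Theorem 1.1], which handles the lc case uniformly without splitting into klt, $\kappa=0$, and general finitely-generated subcases. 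Your outline is not wrong in spirit, but it reproves the content of these two citations from scratch; the paper simply quotes them.
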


\begin{proof}
 Because $(F,B_F)$ has a good minimal model, we have that $K_F+B_F$ is abundant. By semi-continuity theorem we deduce that $K_G+B_G$ is abundant where $G$ is the generic fibre of $f$. We therefore conclude that $K_X+B$ is abundant thanks to Theorem \ref{m-thm}.
	
	In particular, if the canonical ring $R(K_X+B)$ is finitely generated, then $K_X+B$ birationally has a Nakayama-Zariski decomposition with semi-ample positive part by [\ref{Lehmann}, Proposition 6.4] (note that the proposition we referred to also applies to singular varieties as we can lift line bundles to smooth models). Therefore by [\ref{BH-I}, Theorem 1.1] we deduce $(X,B)$ has a good minimal model.
\end{proof}

\vspace{0.3cm}
\section{Variants}\label{variants}
\subsection{Variant: generalized pairs}
It is known [\ref{BH-I}], [\ref{BZ}] that minimal model theory works for generalized lc generalized pairs. In [\ref{BH-I}] we discussed that the abundance and the nonvanishing are not expected in general. However, there are some interesting cases that abundance is expected (for example, $(X,B+M)$ has the abundant moduli part).

\begin{lem}[Weak Nonvanishing]\label{nonvanishing2}
	Let $f: (X,B+M) \rightarrow Z$ be a surjective morphism from a projective generalized lc generalized pair $(X,B+M)$ to a normal variety $Z$, $B$ be a $\mathbb{Q}$-boundary divisor. Assume that $Z$ has maximal Albanese dimension and that $K_X+B+M$ is big$/Z$. Then, there exists an effective divisor $D \ge 0$ such that $D \equiv K_X+B+M$.
\end{lem}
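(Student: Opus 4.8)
The plan is to reduce the assertion to the nonvanishing theorem for klt polarized pairs over a base of maximal Albanese dimension, namely [\ref{BC}, Theorem 4.1] together with [\ref{CKP}], which is exactly the input used in the proof of Lemma \ref{nonvanishing}. All that is needed is to strip the generalized-pair structure down to a klt polarized pair without altering the numerical class of $K_X+B+M$.

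First I would replace $(X,B+M)$ by a $\mathbb{Q}$-factorial generalized dlt model $\pi:(X',B'+M')\to(X,B+M)$, which exists by [\ref{B-lc-flips}, Theorem 3.5]. Here $(X',B')$ is $\mathbb{Q}$-factorial sub-dlt, $M'$ is nef$/Z$, and $K_{X'}+B'+M'=\pi^{\ast}(K_X+B+M)$; since $\pi$ is birational the latter divisor is again big$/Z$. Writing $B'=\Delta'-E'$ with $\Delta',E'\ge 0$ sharing no component, the relation $\pi_{\ast}B'=B\ge 0$ forces $E'$ to be exceptional$/X$, and $(X',\Delta')$ is $\mathbb{Q}$-factorial dlt. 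Note that $X'\to Z$ is surjective and that $Z$ still has maximal Albanese dimension.

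Next, for a sufficiently small rational number $\epsilon>0$ I would set $\Delta'_{\epsilon}:=\Delta'-\epsilon\lfloor\Delta'\rfloor$, so that $(X',\Delta'_{\epsilon})$ is klt and $(X',\Delta'_{\epsilon}+M')$ is a klt polarized pair over $Z$. Moreover
$$
K_{X'}+\Delta'_{\epsilon}+M'=\pi^{\ast}(K_X+B+M)+E'-\epsilon\lfloor\Delta'\rfloor
$$
is still big$/Z$ for $\epsilon$ small, because $\pi^{\ast}(K_X+B+M)+E'$ is big$/Z$ and subtracting a fixed effective divisor with a small coefficient preserves bigness over $Z$. Now [\ref{BC}, Theorem 4.1] and [\ref{CKP}] give $\kappa(K_{X'}+\Delta'_{\epsilon}+M')\ge 0$, hence an effective divisor $D'\ge 0$ with $D'\equiv K_{X'}+\Delta'_{\epsilon}+M'$. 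Finally I would descend: since $E'$ is $\pi$-exceptional and $\pi_{\ast}\pi^{\ast}$ is the identity on numerical classes, pushing the relation $D'-E'+\epsilon\lfloor\Delta'\rfloor\equiv\pi^{\ast}(K_X+B+M)$ forward along $\pi$ shows that $D:=\pi_{\ast}D'+\epsilon\,\pi_{\ast}\lfloor\Delta'\rfloor\ge 0$ satisfies $D\equiv K_X+B+M$, as required.

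The only substantive ingredient is [\ref{BC}, Theorem 4.1]; everything else is formal manipulation of the generalized-pair structure, the one real trick being to subtract $\epsilon\lfloor\Delta'\rfloor$ to make the boundary klt and then add it back at the end to recover an effective representative of the original class. The point I would be most careful about is matching hypotheses with [\ref{BC}, Theorem 4.1], in particular whether the moduli part is allowed to be only nef over $Z$ rather than globally nef; if an extra reduction is needed to put the pair in the required form, one can first run a $(K_X+B+M)$-MMP$/Z$ with scaling, which terminates with a relative good minimal model since $K_X+B+M$ is big$/Z$ by [\ref{BZ}] and [\ref{B-lc-flips}], and then carry out the above argument.
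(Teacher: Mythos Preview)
Your approach is correct and essentially identical to the paper's: pass to a model where $(X,B+M)$ is a dlt polarized pair, subtract $\epsilon\lfloor B\rfloor$ to make the boundary klt, apply [\ref{BC}, Theorem 4.1] to obtain $D'\ge 0$ with $D'\equiv K_X+B-\epsilon\lfloor B\rfloor+M$, and then take $D=D'+\epsilon\lfloor B\rfloor$. The paper's proof is simply more concise---it replaces directly with a log resolution (so $B\ge 0$ and the extra bookkeeping with $E'$ is unnecessary) and does not invoke [\ref{CKP}] or the final MMP reduction, since only numerical equivalence is required.
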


\begin{proof}
Replacing $(X,B+M)$ with some log resolution we can assume that $(X,B+M)$ is a dlt polarized pair. Let $P:= \lfloor B \rfloor$. Pick a sufficiently small number $\epsilon$ and consider the polarized pair $(X+B-\epsilon P+M)$. By [\ref{BC}, Theorem 4.1] there exists an effective divisor $D' \ge 0$ such that $K_X+B-\epsilon P+M \equiv D'$. We simply take $D= D'+\epsilon P$ to conclude this lemma.
\end{proof}

One may ask if the subadditivity of Kodaira dimensions Conjecture \ref{conj} holds for a generalized polairzed pair $(X,B+M)$. The answer is positive if we suppose a strong positivity assumption on both $K_X+B+M$ and $M$.

\begin{prop}\label{prop1}
	Let $(X,B+M)$ be a generalized lc generalized pair with data $X' \to X$ and $M'$, and $f:(X,B+M) \rightarrow Z$ be a surjective morphism to a normal variety $Z$. Assume that 
	
	$\bullet$ $Z$ has maximal Albanese dimension, 
	
    $\bullet$ $K_X+B+M$ is big$/Z$, and 
    
    $\bullet$ $M'$ is abundant.
	
	Then, $K_X+B+M$ is abundant. Moreover, the subadditvity of Kodaira dimensions $\kappa(K_X+B+M) \geq \kappa(K_F+B_F+M_F)+ \kappa(Z)$ holds, where $F$ is a general fiber, $K_F+B_F+M_F= (K_X+B+M)|_F$, and $\kappa(Z)$ means the Kodaira dimension of a smooth model of $Z$. 
\end{prop}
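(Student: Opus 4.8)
The plan is to reduce the statement to Theorem \ref{m-thm'} by replacing the generalized pair $(X,B+M)$ with an honest lc pair having the same Kodaira and numerical dimensions; the only genuinely new ingredient, compared with the proof of Theorem \ref{m-thm'}, is the hypothesis that $M'$ is abundant, and it enters precisely when one wants to trade the moduli part for a boundary.

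First I would replace $(X,B+M)$ by a $\Q$-factorial generalized dlt model, which exists by [\ref{B-lc-flips}, Theorem 3.5]; since a generalized dlt blow-up is crepant it changes neither $\kappa$ nor $\kappa_\sigma$ of $K_X+B+M$, nor the general fibre $(F,B_F+M_F)$, so I may assume $(X,B+M)$ is $\Q$-factorial generalized dlt with data $\pi\colon X'\to X$ and $M'$ nef and abundant. Because $K_X+B+M$ is big$/Z$, a general fibre $F$ of $f$ has $K_F+B_F+M_F=(K_X+B+M)|_F$ big, hence abundant, with $\kappa(K_F+B_F+M_F)=\dim F=\dim X-\dim Z$; thus the subadditivity assertion reduces to $\kappa(K_X+B+M)\ge\dim X-\dim Z+\kappa(Z)$, and it remains to prove abundance.

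Next I would pass to an honest pair. Let $P$ be the vertical$/Z$ part of $\lfloor B\rfloor$ and $\epsilon>0$ small. Arguing as in Remark \ref{rem-klt} and in the proof of Theorem \ref{m-thm'} — via the canonical bundle formula Theorem \ref{c-bdle-form}, Lemma \ref{lem1}, and the weak nonvanishing Lemma \ref{nonvanishing2} — one checks that $\kappa(K_X+B-\epsilon P+M)=\kappa(K_X+B+M)$ and $\kappa_\sigma(K_X+B-\epsilon P+M)=\kappa_\sigma(K_X+B+M)$, while $(X,B-\epsilon P+M)$ is now generalized klt with $K_X+B-\epsilon P+M$ still big$/Z$. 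The key step is to absorb the moduli part: since $M'$ is nef and abundant and the log canonical divisor is big$/Z$, there is, after a further $\Q$-factorial modification, a klt pair $(X,B^{+})$ with $K_X+B^{+}\sim_{\Q}K_X+B-\epsilon P+M$ — this is the same device used at the end of the proof of Theorem \ref{m-thm'} to pass from the generalized klt pair $(T,\Delta_T+M_T)$ with abundant moduli part to an honest klt pair. Then $K_X+B^{+}$ is big$/Z$, hence relatively abundant$/Z$, and $Z$ has maximal Albanese dimension, so Theorem \ref{m-thm'} applies to $(X,B^{+})$: it is abundant and $\kappa(K_X+B^{+})\ge\kappa((K_X+B^{+})|_F)+\kappa(Z)$. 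Since $K_X+B^{+}\sim_{\Q}K_X+B-\epsilon P+M$ has the same $\kappa$ and $\kappa_\sigma$ as $K_X+B+M$, and the same holds fibrewise because $P$ is vertical$/Z$, this yields that $K_X+B+M$ is abundant and that the desired subadditivity holds.

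The main obstacle is the moduli-part absorption. One must convert the data "$M'$ nef and abundant" into an effective boundary of coefficient $\le 1$: invoking Kawamata's structure theory of nef and abundant divisors, $M'$ becomes, on a birational model, the pullback of a big and nef divisor from its Iitaka base, and one has to combine this with the bigness$/Z$ of $K_X+B+M$ to produce the klt pair $(X,B^{+})$, checking that the construction is compatible with the generalized dlt structure and with the modifications made in the earlier reductions, so that the resulting honest pair still maps to $Z$ with all the hypotheses of Theorem \ref{m-thm'} in force. This is exactly the point which already appears, stated succinctly, in the proof of Theorem \ref{m-thm'}.
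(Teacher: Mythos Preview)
Your overall strategy is the same as the paper's --- perturb $B$ downward, absorb $M$ into the boundary using its abundance, and reduce to the klt result over a base of maximal Albanese dimension --- but there is a genuine gap, and the route you take is unnecessarily circuitous.

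The gap is your choice of $P$. You set $P$ to be the \emph{vertical$/Z$} part of $\lfloor B\rfloor$ and then assert that $(X,B-\epsilon P+M)$ is generalized klt. This is false in general: there is no reason $\lfloor B\rfloor$ should be vertical over $Z$, and any horizontal component of $\lfloor B\rfloor$ survives in $B-\epsilon P$ with coefficient $1$, so the pair is still only generalized lc. The paper instead passes to a log smooth model and takes $P=\lfloor B\rfloor$ in its entirety; on such a model the generalized lc centres are exactly the strata of $\lfloor B\rfloor$, so subtracting $\epsilon\lfloor B\rfloor$ genuinely produces a generalized klt pair. Since $K_X+B+M$ is big$/Z$, shrinking all of $\lfloor B\rfloor$ by $\epsilon$ keeps $K_X+B-\epsilon P+M$ big$/Z$, and the equality of $\kappa$ and $\kappa_\sigma$ before and after follows from Lemma~\ref{nonvanishing} (applied to the honest klt pair $(X,\Delta_\epsilon)$ obtained after absorbing $M$) together with Lemma~\ref{lem1}. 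Your appeal to ``$P$ vertical$/Z$'' for the fibrewise statement is also unnecessary: you already reduced the subadditivity to $\kappa(K_X+B+M)\ge\dim X-\dim Z+\kappa(Z)$ in your second paragraph.

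Separately, your invocation of Theorem~\ref{c-bdle-form} and Remark~\ref{rem-klt} is misplaced. Those are devices for handling a nontrivial relative Iitaka fibration $X\dashrightarrow Y$ over $Z$; here $K_X+B+M$ is already big$/Z$, so that fibration is birational and there is nothing to gain. The paper's proof is accordingly much shorter: once $(X,B-\epsilon P)$ is klt and $M$ is nef and abundant, one directly writes $K_X+\Delta_\epsilon\sim_\Q K_X+B-\epsilon P+M$ for a klt boundary $\Delta_\epsilon$ and applies [\ref{BC}, Theorem~1.1] (rather than Theorem~\ref{m-thm'}, which is a detour through the same endpoint).
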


\begin{proof}
Replacing $(X,B+M)$ with a birational model we can assume that $(X,B+M)$ is a log smooth dlt polarized pair. Moreover, we can assume that $M$ is nef and abundant. Let $P:= \lfloor B \rfloor$, $\epsilon>0$ be a sufficiently small rational number and let $\Delta_\epsilon$ be a $\mathbb{Q}$-boundary divisor such that $(X,\Delta_\epsilon)$ is klt and 
$$
K_X+\Delta_\epsilon \sim_\mathbb{Q} K_X+B-\epsilon P +M.
$$
Since $\epsilon$ can be chosen arbitrary small, $K_X+\Delta_\epsilon$ is big$/Z$. Due to Lemma \ref{nonvanishing} and Lemma \ref{lem1} we get that $\kappa(K_X+\Delta_\epsilon)= \kappa(K_X+B+M)$ and $\kappa_\sigma (K_X+\Delta_\epsilon)= \kappa_\sigma (K_X+B+M)$. It suffices to show that $K_X+\Delta_\epsilon$ is abundant which follows from [\ref{BC}, Theorem 1.1].  
\end{proof}

\begin{rem}\label{rem-gpp}
It is natural to ask if we can remove one of these assumptions above. The example below shows that Proposition \ref{prop1} fails if we drop the condition $M$ being abundant. 
\end{rem}

\begin{exa}\label{exa1}
Let $E$ be an elliptic curve and $\mathcal{E}$ be a nontrivial unipotent vector bundle over $E$ of rank two with $\mathcal{E}$ normalized and $\deg \wedge(\mathcal{E})=0$. Let $\pi: S:= \mathbb{P}_E(\mathcal{E}) \rightarrow E$ be a ruled surface over the base curve $E$, and $E'$ be the divisor corresponding to the nontrivial global section of $\mathcal{E}$. Then, we have the following properties of $E'$: 

(1). $E'^2=0$, so $E'$ is nef;

(2). $K_S+2E' \sim_\mathbb{Q} 0$;

(3). $\kappa(E')=0$.

Note that $K_S+3E'$ is $\pi$-big and nef. However, $\kappa(K_S+3E')=0 <1= \kappa(K_F+3E'|_F)$ where $F$ is a general fiber.  
\end{exa}

\begin{rem}\label{rem-gpp-2}
	By modifying Example \ref{exa1} as below we note that Proposition \ref{prop1} also fails when $K_X+B+M$ is not big$/Z$.
\end{rem}

\begin{exa}\label{exa2}
Let $\pi: S \rightarrow E$ be the $\mathbb{P}^1$-fibration over an elliptic curve $E$ as in Example \ref{exa1}.  Consider a $\mathbb{P}^1$-bundle over $S$
$$
q: Y=\mathbb{P}(\mathcal{O}_S \oplus \mathcal{O}_S (-1)) \rightarrow S.
$$
Let $p: Y \rightarrow C=C(S)$ be the birational contraction of the negative section
$S'$ on $Y$ and $H$ a general sufficiently ample $\mathbb{Q}$-divisor on $C$ such that $\lfloor H \rfloor = 0$ and $K_Y +S'+3 E_Y'+H'$ is big where $E_Y'$ is the birational tranform of the cone of $E'$ on $C$ and $H'=p^\ast H$. Now set 
$$
\lambda = \inf\{t| \text{ $K_Y +S'+3E_Y'+ t H'$ is pseudo-effective$/S$} \}
$$
and $\Delta_Y=S'$ and $M_Y=3E_Y'+ \lambda H'$. We have that $M_Y$ is nef and big, and $K_Y+\Delta_Y+ M_Y =\pi^\ast (K_S+ 3E')$. Therefore $\kappa(K_Y+\Delta_Y+M_Y)=0 < 1= \kappa(K_F+\Delta_F +M_F) +\kappa(E)$ where $F$ is a general fiber.
\end{exa}

In the example above we note that $K_Y+B_Y+M_Y$ is relatively abundant over $E$ and $M_Y$ is big. If $(Y,B_Y) $ is klt, then there is a boundary $\Delta_Y$ such that $(Y,\Delta_Y)$ is klt and $K_Y+\Delta_Y \sim_\Q K_Y+B_Y+M_Y$. Hence the subadditivity of Kodaira dimensions in turn follows from Theorem \ref{m-thm}. From this discussion we see that the existence of an lc center (more precisely, a horizontal lc center) fails the subadditivity of Kodaira dimensions.  

\begin{prop}\label{prop2}
	Let $(X,B+M)$ be a generalized lc generalized pair with data $X' \to X$ and $M'$, and $f:(X,B+M) \rightarrow Z$ be a surjective morphism to a normal variety $Z$. Assume that 
	
	$\bullet$ $Z$ has maximal Albanese dimension,
	
	$\bullet$ $K_X+B+M$ is abundant$/Z$,
	
	$\bullet$ $M'$ is abundant, and
	
	$\bullet$ every lc center of $(X,B+M)$ is vertical$/Z$. 
	
	Then, $K_X+B+M$ is abundant. Moreover, the subadditvity of Kodaira dimensions $\kappa(K_X+B+M) \geq \kappa(K_F+B_F+M_F)+ \kappa(Z)$ holds, where $F$ is a general fiber, $K_F+B_F+M_F= (K_X+B+M)|_F$, and $\kappa(Z)$ means the Kodaira dimension of a smooth model of $Z$. 
\end{prop}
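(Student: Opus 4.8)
The plan is to run the argument of Theorem \ref{m-thm'} in the generalized setting, with Proposition \ref{prop1} playing the role of [\ref{BC}, Theorem 1.1]; the only genuinely new point is that the hypothesis on lc centers is exactly what is needed to make the relevant floor divisor vertical over the base of a relative Iitaka fibration. First I would replace $(X,B+M)$ by a $\Q$-factorial generalized dlt model (which exists by [\ref{B-lc-flips}, Theorem 3.5]) and then by a log smooth model, so that in addition $M$ is nef and abundant. Let $g\colon X\to Y$ be a relative Iitaka fibration of $K_X+B+M$ over $Z$; after one further blow-up $g$ is a morphism, and we write $f=\mu\circ g$. As $f$ and $g$ are surjective, $\mu\colon Y\to Z$ is surjective, so any subvariety of $X$ dominating $Y$ also dominates $Z$. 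Therefore every lc center of $(X,B+M)$, being vertical$/Z$, is vertical$/Y$; in particular, since each component of $\lfloor B\rfloor$ is an lc center of the generalized dlt pair $(X,B+M)$, the divisor $P:=\lfloor B\rfloor$ is vertical$/Y$.

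As in Theorem \ref{m-thm'}, since $K_X+B+M$ is abundant$/Z$ we get $\kappa_\sigma(K_X+B+M/Y)=\kappa(K_X+B+M/Y)=0$ by [\ref{Lehmann}, Theorem 6.1], so on the generic fiber $G$ of $g$ one has $K_G+B_G+M_G\equiv N_\sigma(K_G+B_G+M_G)$. Running an LMMP$/Y$ with scaling of an ample divisor (minimal model theory for generalized pairs, [\ref{BH-I}], [\ref{BZ}], together with abundance for numerically trivial generalized lc pairs) we reach a model on which there is a nonempty open $U\subseteq Y$ with $K_X+B+M\sim_{\Q,U}0$; we replace $X$ by this model and note $P$ stays vertical$/Y$. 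Fix a sufficiently small rational $\epsilon>0$. Using the generalized canonical bundle formula — the generalized analogue of Theorem \ref{c-bdle-form} — to write $\pi^\ast(K_X+B+M)+E\sim_{\Q}(g'')^\ast(K_{Y'}+B_{Y'}+M_{Y'})+R$ on suitable models with $K_{Y'}+B_{Y'}+M_{Y'}$ big$/Z$, and picking $D_{Y'}\geq 0$ on $Y'$ with $(g'')^\ast D_{Y'}\geq\pi^\ast P$ (possible since $\pi^\ast P$ is vertical$/Y'$), the divisor $K_{Y'}+B_{Y'}-\epsilon D_{Y'}+M_{Y'}$ is still big$/Z$; hence $\kappa(K_X+B-\epsilon P+M/Z)=\kappa(K_X+B+M/Z)\geq 0$, and the generalized version of the nonvanishing Lemma \ref{nonvanishing} (proved identically, via the canonical bundle formula and [\ref{BC}, Theorem 4.1]) gives $\kappa(K_X+B-\epsilon P+M)\geq 0$. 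By Lemma \ref{lem1} we conclude $\kappa(K_X+B-\epsilon P+M)=\kappa(K_X+B+M)$ and $\kappa_\sigma(K_X+B-\epsilon P+M)=\kappa_\sigma(K_X+B+M)$, and by the same argument $K_X+B-\epsilon P+M$ is again abundant$/Z$ with $\kappa_\sigma(\cdot/Y)=0$.

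Now $(X,B-\epsilon P+M)$ is generalized klt: every lc center of $(X,B+M)$ lies in $\lfloor B\rfloor=P$, so subtracting $\epsilon P$ destroys every lc place, and this is where the verticality hypothesis is used — it guarantees there are no horizontal$/Y$ lc centers left to handle. Running an LMMP$/Y$ yields a good minimal model $h\colon X'\to T$ with $T\to Y$ birational and $K_{X'}+B'-\epsilon P'+M'\sim_{\Q,T}0$. Applying the canonical bundle formula for generalized klt-trivial fibrations to $h$ — the generalized analogue of Theorem \ref{c-bdle-form-2} — we obtain a generalized klt pair $(T,\Delta_T+N_T)$ with $K_{X'}+B'-\epsilon P'+M'\sim_{\Q}h^\ast(K_T+\Delta_T+N_T)$ and with $N_T'$ nef and abundant, the abundance of $N_T'$ resulting from combining the abundant moduli part $M'$ of the total space with the moduli contribution of the fibration. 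Moreover $\kappa(K_T+\Delta_T+N_T/Z)=\kappa(K_X+B+M/Z)=\dim T-\dim Z$, so $K_T+\Delta_T+N_T$ is big$/Z$. Since $Z$ has maximal Albanese dimension, Proposition \ref{prop1} applies to $(T,\Delta_T+N_T)\to Z$ and shows $K_T+\Delta_T+N_T$ is abundant with $\kappa(K_T+\Delta_T+N_T)\geq(\dim T-\dim Z)+\kappa(Z)$. Since $\kappa$ and $\kappa_\sigma$ are preserved under the birational modifications and under the pullback by the contraction $h$, we conclude $K_X+B+M$ is abundant, and $\kappa(K_X+B+M)=\kappa(K_T+\Delta_T+N_T)\geq(\dim Y-\dim Z)+\kappa(Z)=\kappa(K_F+B_F+M_F)+\kappa(Z)$.

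The main obstacle is supplying the generalized versions of the two canonical bundle formulae used above: one needs a canonical bundle formula for generalized klt- (or lc-) trivial fibrations that produces a generalized pair on the base whose moduli part remains nef and abundant when the total space carries an abundant moduli part, together with the corresponding generalized nonvanishing statement. Once these are in hand the rest is routine: the lc-center hypothesis makes $\lfloor B\rfloor$ vertical$/Y$ after passing to the relative Iitaka fibration, so the $\epsilon\lfloor B\rfloor$-perturbation removes all (horizontal) lc centers, and everything reduces to the already established Proposition \ref{prop1}.
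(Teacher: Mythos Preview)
Your outline tracks the proof of Theorem \ref{m-thm'} faithfully, and your use of the verticality hypothesis is correct: since $Y\to Z$ is surjective, an lc center vertical$/Z$ is vertical$/Y$, so $P=\lfloor B\rfloor$ is vertical$/Y$, and after subtracting $\epsilon P$ the generalized pair becomes generalized klt while $\kappa$ and $\kappa_\sigma$ are preserved. That part is exactly as in the paper.

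Where you diverge from the paper is in what you do \emph{after} reaching the generalized klt pair $(X,B-\epsilon P+M)$ with $M$ nef and abundant. You keep the generalized structure and try to push generalized canonical bundle formulae through the relative Iitaka fibration, ending with Proposition \ref{prop1}; you correctly flag the abundance of the induced moduli part on the base as the main obstacle. The paper sidesteps this completely: since $M$ is nef and abundant and $(X,B-\epsilon P)$ is klt, one can choose an effective divisor $\Q$-linearly equivalent to $M$ with mild enough singularities that the resulting ordinary pair $(X,\Delta)$, with $K_X+\Delta\sim_\Q K_X+B-\epsilon P+M$, is klt. From that point on there is no generalized pair in sight: run an LMMP$/Y$ on $K_X+\Delta$ to a good minimal model $h\colon X'\to T$, apply the ordinary lc-trivial canonical bundle formula (Theorem \ref{c-bdle-form-2}) to get an ordinary klt pair $(T,\Delta_T)$ with $K_T+\Delta_T$ big$/Z$, and conclude by [\ref{BC}, Theorem 1.1]. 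This is the same absorption trick already used in the proof of Proposition \ref{prop1}, applied one step earlier; it eliminates the need for any generalized-pair canonical bundle formula and dissolves the obstacle you identified.
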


\begin{proof}
The argument almost follows the lines of the proof of Theorem \ref{m-thm}. Replacing $(X,B+M)$, we can assume that it is a $\Q$-factorial dlt polarized pair. Moreover, there is a morphism $g: X \rightarrow Y$ over $Z$ and a nonempty open subset $U$ of $Y$ such that $K_X+B\sim_{\mathbb{Q},U} 0$. Let $P=\lfloor B \rfloor$, and pick a sufficiently small number $\epsilon >0$. There is a boundary $\Delta$ such that $(X,\Delta)$ is klt and $K_X+\Delta \sim_{\mathbb{Q}} K_X+B -\epsilon P +M$. It is easy to see $\kappa(K_X+\Delta)=\kappa(K_X+B+M)$ and $\kappa_\sigma(K_X+\Delta)=\kappa_\sigma(K_X+B+M)$. Now run an LMMP$/Y$ on $K_X+\Delta$ which terminates at a good minimal model $h:X' \rightarrow T$. It follows that $K_{X'}+\Delta' \sim_{\mathbb{Q}} h^\ast(K_T+\Delta_T)$ for some klt pair $(T,\Delta_T)$. We see that $K_T+\Delta_T$ is big$/Z$ which in turn implies the abundance of $K_{X'}+\Delta'$ and hence the abundance of $K_X+B+M$.
\end{proof}

We point out that the abundance of the moduli part portrays a central role in the proof of Proposition \ref{prop1} and \ref{prop2}. Especially Example \ref{exa1} shows that the subadditivity of Kodaira dimensions fails if we drop this assumption. On the other hand, Lemma \ref{nonvanishing2} indicates that the weak nonvanishing still holds if we suppose a certain positivity assumption on $K_X+B+M$ relatively over $Z$. \\

\textbf{GV-sheaves and Fourier-Mukai tranforms.}
We briefly describe some basic concepts from the theory of abelian varieties. For any smooth projective variety $X$, we will denote by $a: X \rightarrow A$
the Albanese morphism of $X$ and $\widehat{A} = \mathrm{Pic}^0
(X)$ the dual of the Albanese
variety. We will denote by $\mathbf{D}(X)$ the bounded derived category of coherent
sheaves on $X$. For an abelian variety $A$ and its dual $\widehat{A}$, we denote by $\mathscr{P}_A$ the normalized
Poincar\'e line bundle on $A \times \widehat{A}$. For $\alpha \in \widehat{A}$ we denote by $P_\alpha$ the line bundle
that represents $\alpha$. By [\ref{Mukai}], the following functors give equivalence between
$\mathbf{D}(A)$ and $\mathbf{D}(\widehat{A})$:
$$
\mathbf{R}\Phi_{\mathscr{P}_A}: \mathbf{D}(A) \rightarrow \mathbf{D}(\widehat{A}), \mathbf{R}\Phi_{\mathscr{P}_A}(\cdot)=\mathbf{R} p_{\widehat{A},\ast}(p_A^\ast(\cdot) \otimes \mathscr{P}_A).
$$
$$
\mathbf{R}\Psi_{\mathscr{P}_A}: \mathbf{D}(A) \rightarrow \mathbf{D}(\widehat{A}), \mathbf{R}\Psi_{\mathscr{P}_A}(\cdot)=\mathbf{R} p_{A,\ast}(p_{\widehat{A}}^\ast(\cdot) \otimes \mathscr{P}_A).
$$

For any coherent sheaf $\mathscr{F}$ on $X$ and any morphism $f : X \rightarrow A$ to an
abelian variety, we define the \emph{$i$-th cohomological locus}
$$
V^i(\mathscr{F}, f) := \{ \alpha \in \widehat{A} | H^i(X, \mathscr{F} \otimes P_\alpha)  \neq 0 \}.
$$
If $f = a$ is the Albanese morphism, we will simply denote by $V^i(\mathscr{F})$ the
$i$-th cohomological locus. For more definitions and results we refer to [\ref{PP}] and [\ref{CJ}]. 

For any ample line bundle $L$ on $\widehat{A}$, the isogeny $\phi_L : \widehat{A} \rightarrow A$ is defined by
$\phi_L (\hat{a}) = t^\ast_{\hat{a}} L^\vee \otimes L$. Let $L$ be the vector bundle on $A$ defined by
$$
\widehat{L}^\vee :=  p_{\widehat{A},\ast}(p_A^\ast L \otimes \mathscr{P}_A).
$$
One has that
$$
\phi_L^\ast (\widehat{L}^\vee) \cong \bigoplus\limits_{h^0(L)} L.
$$

Let $\mathscr{F}$ be a coherent sheaf on an abelian variety $A$. Then, $\mathscr{F}$ is a \emph{GV-sheaf} if
$\codim_{\widehat{A}} \Supp \mathbf{R}^i \Phi_{\mathscr{P}_A}
(\mathscr{F}) \ge i$ for all $i \ge 0$. The main result of [\ref{Hacon}, Theorem 1.2] (cf. [\ref{PP}, Theorem A]) asserts that, if for any sufficiently ample line bundle $L$ on $\widehat{A}$, 
$$
H^i(A, \mathscr{F} \otimes \widehat{L}^\vee) = 0,  i > 0  ,
$$
then $\mathscr{F}$ is a GV-sheaf. In particular, one has inclusions
$$
V^0(\mathscr{F}) \supset V^1(\mathscr{F}) \supset \ldots \supset V^n(\mathscr{F}).
$$\\

\textbf{A theorem on weak nonvanishing.} As we mentioned previously some weak positivity assumption on both $K_X+B+M$ and $M$ relatively over $Z$ implies the weak nonvanishing. We first prove an easy lemma.

\begin{lem}[cf. \text{[\ref{BH-I}, Theorem 1.1]}]\label{lem-NZ}
Let $(X,B+M)$ be a generalized lc generalized pair. Suppose that $K_X+B+M$ birationally has a Nakayama-Zariski decomposition with nef positive part. Then $(X,B+M)$ has a log minimal model.
\end{lem}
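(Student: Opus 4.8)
The plan is to follow the proof of [\ref{BH-I}, Theorem 1.1] essentially verbatim; the only point to make is that dlt blow-ups, the MMP with scaling, and Nakayama--Zariski decompositions are all available in the generalized setting, so the presence of the moduli part does not obstruct anything in that argument.

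First I would reduce to the case where $(X,B+M)$ is $\mathbb{Q}$-factorial generalized dlt. By [\ref{B-lc-flips}, Theorem 3.5] there is a $\mathbb{Q}$-factorial generalized dlt blow-up $\phi\colon (\tilde X,\tilde B+\tilde M)\to (X,B+M)$, which is crepant, so $K_{\tilde X}+\tilde B+\tilde M=\phi^\ast(K_X+B+M)$; hence $K_{\tilde X}+\tilde B+\tilde M$ again birationally has a Nakayama--Zariski decomposition with nef positive part (this is a property of the pullback to a high enough resolution, hence insensitive to crepant modifications), and a log minimal model of $(\tilde X,\tilde B+\tilde M)$ is one of $(X,B+M)$ by the usual comparison of log discrepancies. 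So we may assume $(X,B+M)$ is $\mathbb{Q}$-factorial generalized dlt. By the definition of ``birationally has a Nakayama--Zariski decomposition with nef positive part'' I then pass to a birational morphism $h\colon W\to X$ with $(W,B_W+M_W)$ a $\mathbb{Q}$-factorial generalized dlt log smooth model of $(X,B+M)$ such that $P:=P_\sigma(K_W+B_W+M_W)$ is nef; set $N:=N_\sigma(K_W+B_W+M_W)\ge 0$, so $K_W+B_W+M_W\equiv P+N$. Again a log minimal model of $(W,B_W+M_W)$ is one of $(X,B+M)$, so it suffices to treat $(W,B_W+M_W)$.

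Next I would run an LMMP on $K_W+B_W+M_W$ with scaling of an ample divisor; such an LMMP exists for $\mathbb{Q}$-factorial generalized dlt pairs by [\ref{BZ}] (see also [\ref{BH-I}]). The role of the hypothesis is that, $P$ being nef, every extremal ray $R$ contracted in the LMMP satisfies $N\cdot R=(K_W+B_W+M_W)\cdot R-P\cdot R<0$, so the LMMP only ever contracts or flips loci contained in $\Supp N$, every divisorial contraction contracts one of the finitely many prime components of the birational transform of $N$ (which never reappear), and the negative part keeps shrinking; consequently the LMMP is, as far as termination goes, an LMMP with scaling on the effective divisor $N$, which has finitely many components, and it terminates by exactly the mechanism used in the proof of [\ref{BH-I}, Theorem 1.1] — an argument that relies only on the effectivity and finiteness of the components of the negative part together with the existence of the generalized dlt MMP with scaling, none of which involves the moduli part $M$ in an essential way. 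The output $(Y,B_Y+M_Y)$ has $K_Y+B_Y+M_Y$ nef, is $\mathbb{Q}$-factorial dlt, and, since every step was $(K_X+B+M)$-negative (strictly so on contracted divisors), satisfies the discrepancy inequalities defining a log minimal model; hence it is a log minimal model of $(X,B+M)$.

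The hard part will be the termination of the flipping part of this LMMP; this is where the nefness of the positive part is genuinely used, as it confines the LMMP to the finitely many prime components of the negative part and thereby reduces termination to the corresponding statement in [\ref{BH-I}], whose proof must be checked to transfer unchanged to the generalized setting.
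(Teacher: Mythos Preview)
Your overall strategy is sound, but the LMMP you describe is not the one in [\ref{BH-I}], and the difference matters. You propose to run an LMMP on $K_W+B_W+M_W$ directly and argue that, since $P$ is nef, every negative extremal ray $R$ has $N\cdot R<0$, so the program stays inside $\Supp N$. This is correct at the first step, but after a flip the birational transform of $P$ need not be nef on the new model; hence you can no longer conclude $P_i\cdot R\ge 0$ at subsequent steps, and the inclusion of the flipping loci in $\Supp N_i$ --- which is what you rely on for termination --- is not justified. So the termination argument you sketch has a genuine gap.

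The paper (and [\ref{BH-I}, Proof of Theorem 1.1]) avoids this by running the LMMP not on $K_X+B+M$ but on $K_X+B+M+\alpha P$ for $\alpha\gg 0$, viewing $M+\alpha P$ as the nef part of a new generalized pair. The point of the extra $\alpha P$ is the length bound for extremal rays [\ref{BH-I}, Theorem 3.2]: any $(K_X+B+M+\alpha P)$-negative extremal ray contains a curve with bounded negative intersection, and since $P\cdot R>0$ would force this intersection to be very positive once $\alpha$ is large, one gets $P\cdot R=0$ on every ray contracted. Thus the LMMP is $P$-trivial, $P$ stays nef throughout, each step is simultaneously a $(K_X+B+M)$-negative step, and now your intended mechanism (the LMMP is confined to $\Supp N$, divisorial contractions only kill components of $N$, and the scaling terminates) goes through. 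Your reductions to the $\Q$-factorial generalized dlt case are fine; the fix is only to insert this $\alpha P$ twist before invoking the termination mechanism.
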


\begin{proof}
Replacing $(X,B+M)$ we can assume that $P=P_\sigma(K_X+B+M)$ is nef. Now run an LMMP on $K_X+B+M +\alpha P$ with scaling of some ample divisor for some $\alpha \gg 0$. This LMMP is $P$-trivial due to [\ref{BH-I}, Theorem 3.2, Proof of Theorem 1.1]. By an easy computation we deduce that it terminates with a log minimal model. 
\end{proof}

\begin{lem}\label{lc-m-g-model}
	Let $(X/Z,B+M)$ be a generalized lc generalized pair. Assume that the divisorial ring $\mathcal{R}(X/Z,K_X+B+M)$ is a finitely generated $\mathcal{O}_Z$-algebra, and that $K_X+B+M$ is abundant$/Z$. Then, $(X/Z,B+M)$ has a log minimal model $(X'/Z,B'+M')$ on which $K_{X'}+B'+M'$ is semi-ample over $Z$.
\end{lem}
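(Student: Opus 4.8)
The plan is to obtain the good minimal model by upgrading the relative Nakayama--Zariski decomposition of $K_X+B+M$ to one whose positive part is semi-ample over $Z$, and then to feed this into the $P$-trivial LMMP of Lemma \ref{lem-NZ} in its relative form. First I would replace $(X/Z,B+M)$ by a $\Q$-factorial generalized dlt model (using [\ref{B-lc-flips}, Theorem 3.5]), so that $K_X+B+M$ becomes an honest $\Q$-Cartier divisor; a log minimal model of the new pair is one of the original, so this is harmless. Since $\mathcal{R}(X/Z,K_X+B+M)$ is a finitely generated $\mathcal{O}_Z$-algebra and $K_X+B+M$ is abundant$/Z$, the relative version of [\ref{Lehmann}, Proposition 6.4]---applied on a smooth model and lifting line bundles exactly as in the proof of Proposition \ref{m-prop'}---shows that $K_X+B+M$ birationally has a Nakayama--Zariski decomposition over $Z$ whose positive part $P$ is semi-ample$/Z$; in particular $P=P_\sigma(K_X+B+M/Z)$ is nef$/Z$ and is the pullback of an ample$/Z$ divisor $H$ from the ample model $T/Z$ of $K_X+B+M$ (which exists by finite generation).

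Next I would run the relative analogue of Lemma \ref{lem-NZ}: after replacing $(X/Z,B+M)$ so that $P$ itself is nef$/Z$, run an LMMP$/Z$ on $K_X+B+M+\alpha P$ with scaling of an ample divisor for $\alpha\gg 0$. By [\ref{BH-I}, Theorem 3.2] this LMMP is $P$-trivial, and since modulo $P$ it is an LMMP$/Z$ on an effective divisor (namely $N_\sigma(K_X+B+M/Z)$), it terminates; the argument of Lemma \ref{lem-NZ} applies verbatim over $Z$ and produces a log minimal model $(X'/Z,B'+M')$ of $(X/Z,B+M)$ on which $K_{X'}+B'+M'$ is nef$/Z$ and equals the birational transform of $P$.

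It remains to promote nefness to semi-ampleness$/Z$. Taking a common resolution $p\colon W\to X$, $q'\colon W\to X'$, negativity of the LMMP gives $p^\ast(K_X+B+M)=q'^\ast(K_{X'}+B'+M')+F$ with $F\ge 0$ and $F$ exceptional$/X'$. Since $K_{X'}+B'+M'$ is nef we have $N_\sigma(q'^\ast(K_{X'}+B'+M')/Z)=0$, and because $q'_\ast\mathcal{O}_W(mF)=\mathcal{O}_{X'}$ one gets $q'^\ast(K_{X'}+B'+M')=P_\sigma(p^\ast(K_X+B+M)/Z)=(W\to T)^\ast H$. By the rigidity lemma the morphism $W\to T$ factors through $q'$, and comparing pullbacks on $W$ yields $K_{X'}+B'+M'=\psi^\ast H$ for the induced morphism $\psi\colon X'\to T$; as $H$ is ample$/Z$, $K_{X'}+B'+M'$ is semi-ample$/Z$, which is the assertion.

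I expect the only real subtlety to be checking that the ingredients genuinely hold in the required generality---the relative Nakayama--Zariski decomposition, the relative form of [\ref{Lehmann}, Proposition 6.4], and the termination of the $P$-trivial LMMP$/Z$ for a generalized lc generalized pair---whereas the final descent of semi-ampleness through the birational contraction is routine rigidity and poses no difficulty.
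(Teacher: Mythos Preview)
Your approach and the paper's agree on the absolute case $Z=\mathrm{pt}$: finite generation together with abundance, via [\ref{Lehmann}, Proposition 6.4], gives a birational Nakayama--Zariski decomposition with semi-ample positive part, and then Lemma \ref{lem-NZ} yields the minimal model; the semi-ampleness descends exactly as you describe. For the relative case, however, the paper does \emph{not} relativize these ingredients. Instead it runs an ordinary LMMP$/Z$ on $K_X+B+M$ with scaling; by the already-proved absolute case applied to the generic fibre over $Z$, after finitely many steps one has $(K_{X'}+B'+M')|_{F_\eta}\sim_\Q 0$ over the generic fibre of the map to the ample model $T$. Writing $K_{X'}+B'+M'\sim_\Q g^\ast A_T+N$ with $A_T$ ample$/Z$ and $N\ge 0$ vertical$/T$, [\ref{B-lc-flips}, Lemma 3.2] shows $N$ is very exceptional$/T$, and then [\ref{B-lc-flips}, Theorem 3.4] gives an LMMP$/T$ that contracts $N$ and terminates with a model on which the log canonical divisor is the pullback of $A_T$, hence semi-ample$/Z$.

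So the paper bootstraps the relative statement from the absolute one via the very-exceptional-divisor machinery of [\ref{B-lc-flips}], thereby sidestepping exactly the three relative extensions you flag as the subtle points (relative [\ref{Lehmann}], relative Lemma \ref{lem-NZ}, termination of the $P$-trivial LMMP$/Z$). Your route is more uniform if those relative versions are available, but they are not established in the references at hand, so the paper's detour through [\ref{B-lc-flips}] buys a self-contained argument. One small correction: your stated reason for termination, ``modulo $P$ it is an LMMP$/Z$ on an effective divisor,'' is not in itself a termination argument; the actual termination in [\ref{BH-I}] uses the full proof of Theorem 1.1 there (scaling and a reduction via [\ref{BH-I}, Theorem 3.2]), so you should invoke that rather than the bare observation about effectivity.
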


\begin{proof}
	We first treat the case when $Z$ is a point. Since $\mathcal{R}(K_X+B+M)$ is finitely generated, there is a log resolution $f:Y \rightarrow X$ such that $f^\ast (K_X+B+M)=P+N$ where $P$ is semi-ample and $N$ is the asymptotic fixed part. Note that the abundance of $K_X+B+M$ implies that $N= N_\sigma (f^\ast(K_X+B+M))$ by [\ref{Lehmann}, Proposition 6.4]. Therefore, $K_X+B+M$ birationally has a Nakayama-Zariski decomposition with nef positive part. We immediately obtain that $(X,B+M)$ has a log minimal model $(X',B'+M')$ according to the previous lemma. We can assume that the birational map $g:Y \dashrightarrow X'$ is a morphism, and it is obvious that $K_{X'}+B'+M'$ is semi-ample as $g^\ast (K_{X'}+B'+M')= P_\sigma (f^\ast(K_X+B+M))=P$ is semi-ample. .
	
	Next we prove the general case. Run an LMMP$/Z$ on $K_X+B+M$ with scaling of an ample$/Z$ divisor. From the argument above we reach a model $g:(X',B'+M') \dashrightarrow T$ after finitely many steps where $T$ is the lc model of $(X/Z,B+M)$ and $(K_{X'}+B'+M')|_{F_\eta} \sim_\Q 0$ on the generic fiber $F_\eta$ of $g$. Replacing $(X'/Z,B'+M')$ with some birational model, we can assume that $g$ is a morphism and write $K_{X'}+B'+M'\sim_\Q g^\ast A_T +N$ where $A_T$ is an ample$/Z$ divisor and $N \ge 0$ is vertical$/T$. By Lemma [\ref{B-lc-flips}, Lemma 3.2], $N$ is very exceptional$/T$, and hence by the proof of Lemma [\ref{B-lc-flips}, Theorem 3.4] any LMMP$/T$ on $K_{X'}+B'+M'$ with scaling of an ample$/Z$ divisor contracts $N$ and terminates with a minimal model $g'': (X'',B''+M'')\rightarrow T$ on which $K_{X''}+B''+ M'' \sim_\Q g''^\ast A_T$. 
\end{proof}

Next we prove a weak nonvanishing theorem via a similar argument from [\ref{BC}, Theorem 4.1].

\begin{thm}[Weak nonvanishing]\label{weaknonvanishing}
Let $(X,B+M)$ be a generalized lc generalized pair with data $X' \to X$ and $M'$, and $f:(X,B+M) \rightarrow Z$ be a surjective morphism to a normal variety $Z$. Assume that 
	
	$\bullet$ $Z$ has maximal Albanese dimension,

$\bullet$ $K_X+B+M$ is abundant$/Z$, 

$\bullet$ $\mathcal{R}(X/Z,K_X+B+M)$ is a finitely generated $\mathcal{O}_Z$-algebra, and

$\bullet$ $M'$ is semi-ample$/Z$. 

Then, there exists an effective divisor $D \ge 0$ such that $D \equiv K_X+B+M$.
\end{thm}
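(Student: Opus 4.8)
The strategy is to reduce to the case already treated in Lemma \ref{nonvanishing2}, namely the case in which $K_X+B+M$ is big$/Z$.

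First I would use the finite generation of $\mathcal{R}(X/Z,K_X+B+M)$ together with the abundance of $K_X+B+M$ over $Z$ to invoke Lemma \ref{lc-m-g-model}: it produces a log minimal model $(X'/Z,B'+M')$ of $(X,B+M)$ on which $K_{X'}+B'+M'$ is semi-ample$/Z$. The conclusion we are after is unchanged under this birational modification: on a common resolution $W\xrightarrow{p}X$, $W\xrightarrow{q}X'$ one has $p^\ast(K_X+B+M)=q^\ast(K_{X'}+B'+M')+E$ with $E\ge 0$, by the defining inequality of a log minimal model together with the negativity lemma, so an effective $D'\equiv K_{X'}+B'+M'$ yields $D:=p_\ast(q^\ast D'+E)\ge 0$ with $D\equiv K_X+B+M$ (pushforward along a birational morphism onto a normal projective variety preserves numerical equivalence). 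Since the remaining hypotheses of the theorem — $Z$ has maximal Albanese dimension, and $M'$ is semi-ample$/Z$ — are properties of $Z$ and of the fixed moduli data $M'$, they persist, so I may assume from the outset that $K_X+B+M$ is semi-ample$/Z$.

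Next I would pass to the relative ample model. Let $h:X\to T$ be the contraction over $Z$ defined by $|m(K_X+B+M)/Z|$ for $m$ sufficiently divisible, and $t:T\to Z$ the induced morphism, so that $K_X+B+M\sim_{\mathbb{Q}}h^\ast A_T$ for a $\mathbb{Q}$-divisor $A_T$ on $T$ that is ample$/Z$, hence big$/Z$. Then $h:(X,B+M)\to T$ is a generalized lc-trivial fibration, and applying the generalized canonical bundle formula — the generalized-pair analogue of Theorem \ref{c-bdle-form-2}, see [\ref{BZ}], and this is the point at which the semi-ampleness$/Z$ of $M'$ is used — I obtain a generalized lc generalized pair $(T,B_T+M_T)$ with $B_T$ a $\mathbb{Q}$-boundary and $K_X+B+M\sim_{\mathbb{Q}}h^\ast(K_T+B_T+M_T)$. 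Because $h$ is a contraction, $h^\ast$ is injective modulo $\mathbb{Q}$-linear equivalence, so $K_T+B_T+M_T\sim_{\mathbb{Q}}A_T$, which is big$/Z$. Now $(T,B_T+M_T)\to Z$ meets the hypotheses of Lemma \ref{nonvanishing2}, which delivers an effective $D_T\ge 0$ with $D_T\equiv K_T+B_T+M_T\sim_{\mathbb{Q}}A_T$; pulling back, $D:=h^\ast D_T\ge 0$ satisfies $D\equiv h^\ast A_T\sim_{\mathbb{Q}}K_X+B+M$, as required.

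The step I expect to be the main obstacle is the use of the generalized canonical bundle formula for the fibration $h:(X,B+M)\to T$: one must check that it applies in this generalized setting, that the discriminant $B_T$ is a genuine $\mathbb{Q}$-boundary and the moduli data of $(T,B_T+M_T)$ is nef, and pin down precisely where the hypothesis ``$M'$ semi-ample$/Z$'' enters (it is the only hypothesis not already consumed by Lemma \ref{lc-m-g-model} or Lemma \ref{nonvanishing2}). By contrast, the genuinely hard input — weak nonvanishing over a base of maximal Albanese dimension in the big$/Z$ case, which ultimately rests on [\ref{BC}, Theorem 4.1] and the generic vanishing / GV-sheaf machinery recalled above — is already isolated in Lemma \ref{nonvanishing2}, so no new analytic ingredient is required here.
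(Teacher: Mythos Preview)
Your first reduction --- applying Lemma \ref{lc-m-g-model} to assume $K_X+B+M$ is semi-ample$/Z$, then passing to the relative ample model $h:X\to Y$ over $Z$ --- matches the paper exactly. The divergence, and the gap, is at the canonical bundle formula step.

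The reference [\ref{BZ}] does not contain a canonical bundle formula for generalized lc-trivial fibrations; it sets up the theory of generalized pairs but does not prove that a generalized lc-trivial fibration $(X,B+M)\to Y$ induces a generalized lc structure on $Y$ with b-nef moduli part. Such results appeared only later (Filipazzi, Han--Liu) and are not available within the paper's framework. You correctly flagged this as the main obstacle, but it is a genuine missing ingredient here, not a routine check.

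The paper circumvents it by using the hypothesis ``$M'$ semi-ample$/Z$'' in a different way: not as input to a generalized canonical bundle formula, but to \emph{eliminate} the moduli part altogether. Semi-ampleness$/Z$ gives $M'\sim_\Q q^\ast M_T$ with $M_T$ ample$/Z$ and (globally) nef; after twisting by a small multiple $\delta p^\ast H$ of the pullback of an ample divisor on $A=\mathrm{Alb}(Z)$, $M_T+\delta t^\ast H$ becomes nef and big, so one can write it as (semi-ample) $+$ (small effective) and absorb it into an honest boundary $L$, obtaining an \emph{ordinary} dlt pair $(X,B+L)$ with $K_X+B+L\sim_\Q K_X+B+M+\delta p^\ast H$. (An extra LMMP removing the vertical part of $\lfloor B\rfloor$ is run first, to make all lc centers horizontal$/Y$ and thereby control the singularities on the base.) Now the \emph{classical} canonical bundle formula, Theorem \ref{c-bdle-form-2}, applies and yields a klt pair $(Y,\Delta_Y)$ with $D_Y+\delta g^\ast H\sim_\Q K_Y+\Delta_Y$.

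There is one further wrinkle your outline does not anticipate: one wants weak nonvanishing for $D_Y$ itself, not for $K_Y+\Delta_Y=D_Y+\delta g^\ast H$, so simply invoking Lemma \ref{nonvanishing2} or [\ref{BC}, Theorem 4.1] for $(Y,\Delta_Y)$ is off by the $\delta g^\ast H$ twist. The paper therefore reruns the GV-sheaf / Fourier--Mukai argument directly for $\mathscr{F}_s=g_\ast\mathcal{O}_Y(sID_Y)$, using that $sID_Y-(K_Y+\Delta_Y)$ is nef and big for $s\ge 2$ once $D_Y$ has been made nef by an LMMP$/A$.
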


\begin{proof}
Replacing $(X,B+M)$ we can assume it is $\Q$-factorial generalized dlt polarized pair and there is a morphism $q: X' \rightarrow T$ over $Z$ such that $M' \sim_\Q q^\ast M_T$ for some relatively ample$/Z$ divisor $M_T$. Let $A=\mathrm{Alb}(Z)$. Pick an ample divisor $H$ on $A$ and a sufficiently small number $\delta >0$. Since $M_T$ is globally nef and big$/A$, it follows that $M'+\delta p'^\ast H$ is nef and abundant where $p':X' \rightarrow A$ is a morphism. Moreover, it is relatively semi-ample over a nonempty open subset $U \subset A$. 

Now run an LMMP$/Z$ on $K_X+B+M$ which terminates at a log minimal model $r:(X'',B''+M'') \rightarrow Y$ on which $K_{X''}+B''+M'' \sim_\Q r^\ast D_Y$ for some ample$/Z$ divisor $D_Y$ by Lemma \ref{lc-m-g-model}. Replacing $(X,B+M)$ with $(X'',B''+M'')$ and letting $P$ be the vertical$/Y$ part of $\lfloor B  \rfloor$ and $\epsilon>0$ be a sufficiently small rational number, we can assume $K_X+B+M$ is semi-ample$/Z$. As we argued in the proof oh Theorem \ref{m-thm'}, run an LMMP$/Y$ on $K_X+B-\epsilon P+ M$ which terminates at a good minimal model $X'' \to Y'/Y$. Again replacing $X$, $B$ and $Y$ with $X''$, $B''-\epsilon P''$ and $Y'$, we can assume that $P=0$. 

Let $S$ be a minimal lc center of $(X,B+M)$. Because every lc center of $(X,B+M)$ is horizontal$/Y$, $S$ is horizontal$/A$. We claim that there is an effective divisor $L$ such that $(X,B+L)$ is dlt, $(S,B_S+L_S)$ is klt and $K_X+B+L \sim_\Q K_X+B+M+\delta p^\ast H$  where $p:X \rightarrow A$ is a morphism. To this end, we pick sufficiently small rational numbers $\epsilon'$ and  $\delta'' \ll \delta$ so that $(1-\epsilon')M_T+\delta' t^*H=N+E$ where $t:T \to A$ is a morphism, $N$ is semi-ample and $E$ is effective with sufficiently small coefficients. If any sub-lc center of $(X',B')$ is contained in the support of $q^*E$, then it must be mapped to some horizontal$/A$ part of $E$. Therefore there is a divisor $E' \sim_{\mathbb{Q}} E+\epsilon'M_T$ such that the horizontal$/A$ part of $E'$ is effective and no sub-lc centers is contained in the support of $q^*E$. Note that $E'$ is not necessarily effective but we can suitably choose $H$ so that $E'+ (\delta-\delta')t^* H$ is effective which concludes the claim.

Applying a canonical bundle formula \ref{c-bdle-form-2} we get $D_Y +\delta g^\ast H \sim_\Q K_Y+\Delta_Y$ where $g: Y \rightarrow A$ is a morphism and $(Y,\Delta_Y)$ is klt. Now it suffices to prove the weak nonvanishing for $D_Y$. 

Replacing $(Y,\Delta)$ we can assume it is $\Q$-factorial. Since $K_Y+\Delta_Y$ is big$/A$, we can run an LMMP$/A$ on $K_Y+\Delta_Y$ which terminates at a good minimal model $(Y',\Delta_{Y'})$. In particular $K_{Y'}+\Delta_{Y'}$ is globally nef since $A$ does not contain any rational curve. Replacing $(Y,\Delta_Y)$ we can assume that $D_Y$ is nef. Replacing $(X,B+M)$ we can assume $D=K_X+B+M \sim_\Q r^\ast D_Y$. 

Let $I$ be a positive integer so that both $ID$ and $ID_Y$ are Cartier, and put $\mathscr{F}_s := g_\ast \mathcal{O}_Y(sID_Y)$. For any ample divisor $H$ on $A$ and any $P \in \mathrm{Pic}^0(A)$, by the Kawamata-Viehweg vanishing theorem,
$$
R^ig_\ast \mathcal{O}_Y(sID_Y +g^\ast H +g^\ast P)=0
$$
for all $i > 0$ and $s \ge 2$ and
$$
H^i(Y,sID_Y+g^\ast H +g^\ast P)=0
$$
for all $i > 0$ and $s \ge 2$, hence
$$
H^i(A,\mathscr{F}_s \otimes \mathcal{O}_A(H+P))
$$
for all $i > 0$ and $s \ge 2$.

We claim that $\mathscr{F}_s $ is a GV-sheaf. To this end, let $\phi_L: \widehat{A} \rightarrow A$ be the isogeny defined by a sufficiently positive ample line bundle $L$, one has $\phi_L^\ast (\widehat{L}^\vee) \cong \bigoplus\limits_{h^0(L)} L$. Let $\hat{g}: \widehat{Y}=Y \times_A \widehat{A} \rightarrow \widehat{A}$, $\varphi:\widehat{Y}\rightarrow Y $ be the induced morphism, and $\widehat{\mathscr{F}}_s=\hat{g}_\ast \mathcal{O}_{\widehat{Y}}(\varphi^\ast sID_Y)$. By Kawamata-Viehweg vanishing theorem, we have 
$$
H^i(\widehat{A},\widehat{\mathscr{F}}_s \otimes \mathcal{O}_{\widehat{A}}(L))=0
$$
for $i>0$, which in turn implies that
$$
H^i(A,\mathscr{F}_s \otimes L^\vee)=0
$$
for $i>0$.

Since $\mathscr{F}_s$ is a sheaf of rank $h^0(G, sID_Y|_G )$ where $G$ is a general fiber of $g$, it is a non-zero sheaf for $s$ sufficiently divisible. Therefore, $V^0(\mathscr{F}_s) \neq \emptyset$ otherwise $V^i(\mathscr{F}_s) = \emptyset$ for all $i$ which
implies that the Fourier-Mukai transform of $\mathscr{F}_s$ is zero. Pick a line bundle $P \in V^0(\mathscr{F}_s)$ and we conclude that $\kappa(sI(K_X+B+M)+f^\ast P) \ge 0$.
\end{proof}

\begin{rem}
Note that the assumption $\mathcal{R}(X/A,K_X+B+M)$ being finitely generated automatically holds when $(X,B+M)$ is generalized klt since abundance implies finite generation. We immediately obtain a corollary for generalized klt pairs. 
\end{rem}

\begin{cor}\label{nonvanishing-cor-1}
	Let $(X,B+M)$ be a generalized klt generalized pair with data $X' \to X$ and $M'$, and $f:(X,B+M) \rightarrow Z$ be a surjective morphism to a normal variety $Z$. Assume that 
	
	$\bullet$ $Z$ has maximal Albanese dimension,
	
	$\bullet$ $K_X+B+M$ is abundant$/Z$,  and
	
	$\bullet$ $M'$ is abundant$/Z$. 
	
	Then, there exists an effective divisor $D \ge 0$ such that $D \equiv K_X+B+M$.
\end{cor}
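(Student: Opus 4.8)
The plan is to deduce the corollary from the weak nonvanishing Theorem \ref{weaknonvanishing}. Of the four hypotheses there, three hold already: $Z$ has maximal Albanese dimension, $K_X+B+M$ is abundant$/Z$, and, since $(X,B+M)$ is generalized klt, the ring $\mathcal{R}(X/Z,K_X+B+M)$ is automatically a finitely generated $\mathcal{O}_Z$-algebra (this is the content of the remark preceding the corollary: abundance$/Z$ for a generalized klt pair yields a good minimal model$/Z$, hence finite generation). The only discrepancy with Theorem \ref{weaknonvanishing} is that we are given $M'$ abundant$/Z$ rather than semi-ample$/Z$. So the whole argument amounts to one reduction: replacing $(X,B+M)$ by a generalized klt pair $(X,\widetilde B+\widetilde M)$ with the same $X$ and with $K_X+\widetilde B+\widetilde M=K_X+B+M$, but whose moduli divisor is semi-ample$/Z$; then Theorem \ref{weaknonvanishing} applies verbatim and produces $D\ge 0$ with $D\equiv K_X+\widetilde B+\widetilde M=K_X+B+M$.

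For the reduction, write $\pi\colon X'\to X$ for the morphism in the data, and replace $X'$ by a resolution so that $\pi$ is a log resolution of $(X,B)$. Since $M'$ is nef and abundant$/Z$, the structure theory of nef and abundant divisors (Kawamata; see also [\ref{Lehmann}], [\ref{Nakayama}]) lets us, after a further birational modification of $X'$, assume there is a contraction $h\colon X'\to T$ over $Z$ and a nef and big$/Z$ $\mathbb{Q}$-divisor $M_T$ on $T$ with $M'\sim_{\mathbb{Q},Z}h^\ast M_T$. Fix a $\mathbb{Q}$-divisor $E\ge 0$ on $T$ with $M_T\sim_{\mathbb{Q},Z}E+A_0$ for some ample$/Z$ divisor $A_0$ (such $E$ exists since $M_T$ is big$/Z$, by Kodaira's lemma), and, enlarging $X'$ once more if needed, assume in addition that $X'$ is smooth and $(X',B'+h^\ast E)$ is log smooth. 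Then for every sufficiently small rational $s>0$ one has
$$
M_T-sE\sim_{\mathbb{Q},Z}(1-s)M_T+s(M_T-E)\sim_{\mathbb{Q},Z}(1-s)M_T+sA_0,
$$
which is ample$/Z$, so that $\widetilde M':=M'-s\,h^\ast E\sim_{\mathbb{Q},Z}h^\ast(M_T-sE)$ is semi-ample$/Z$, while $s\,h^\ast E\ge 0$ has arbitrarily small coefficients.

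Now set $\widetilde B':=B'+s\,h^\ast E$. Because $(X',B')$ is sub-klt, $X'$ is log smooth, $\widetilde B'$ is snc, and $s\,h^\ast E$ has small coefficients, the coefficients of $\widetilde B'$ are still $<1$, so $(X',\widetilde B')$ is sub-klt. Hence $(X,\widetilde B+\widetilde M)$ with data $\pi\colon X'\to X$ and moduli divisor $\widetilde M'$ — where $\widetilde B=\pi_\ast\widetilde B'$ and $\widetilde M=\pi_\ast\widetilde M'$ — is a generalized klt pair, $\widetilde B=B+s\,\pi_\ast h^\ast E\ge 0$ is a genuine boundary for $s$ small, $\widetilde M'$ is semi-ample$/Z$, and
$$
K_{X'}+\widetilde B'+\widetilde M'=K_{X'}+B'+M'=\pi^\ast(K_X+B+M),
$$
so pushing forward by $\pi$ gives $K_X+\widetilde B+\widetilde M=K_X+B+M$ (in particular it is abundant$/Z$, with the same finitely generated divisorial ring as before). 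Applying Theorem \ref{weaknonvanishing} to $(X,\widetilde B+\widetilde M)\to Z$ then finishes the proof. The main obstacle is this reduction step, and within it the relative (and possibly singular) form of the structure theorem for nef and abundant divisors — extracting the contraction $h\colon X'\to T/Z$ with $M'\sim_{\mathbb{Q},Z}h^\ast M_T$ and $M_T$ big$/Z$; the remainder — the relative Kodaira lemma and verifying that absorbing $s\,h^\ast E$ into the boundary preserves both the generalized klt condition and the class $K_X+B+M$ — is routine bookkeeping.
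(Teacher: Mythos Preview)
Your approach agrees with the paper's: the paper treats this corollary as an immediate consequence of Theorem \ref{weaknonvanishing}, justified only by the remark that for a generalized klt pair abundance$/Z$ forces finite generation of $\mathcal{R}(X/Z,K_X+B+M)$. You follow the same route but go further, explicitly handling the discrepancy between the hypothesis ``$M'$ abundant$/Z$'' in the corollary and ``$M'$ semi-ample$/Z$'' in the theorem --- a point the paper passes over in silence. Your reduction (after a resolution, write $M'\sim_{\mathbb Q,Z}h^\ast M_T$ with $M_T$ nef and big$/Z$, peel off a small multiple of an effective $E$ via Kodaira's lemma so that $M'-s\,h^\ast E$ becomes the pullback of an ample$/Z$ divisor, and absorb $s\,h^\ast E$ into the boundary while staying sub-klt) is correct and standard; the generalized klt hypothesis is exactly what makes this absorption harmless. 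So your proof is the paper's intended one, made honest by filling in the reduction step.
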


Since the relative abundance assumption is automatically satisfied when a fibration is relatively of Fano type, we immediately obtain the corollary below. Note that this includes Example \ref{exa1} and \ref{exa2}. 

\begin{cor}\label{nonvanishing-cor-2}
	Let $(X,B+M)$ be a generalized lc generalized pair with data $X' \to X$ and $M'$, and $f:(X,B+M) \rightarrow Z$ be a surjective morphism to a normal variety $Z$. Assume that 
	
	$\bullet$ $Z$ has maximal Albanese dimension,

$\bullet$ $\kappa(K_F+B_F+M_F) \ge 0$ where $F$ is a general fiber of $f$, and

$\bullet$ $X$ is relatively of log Fano type over $Z$, that is, there is a boundary $\Delta$ such that $(X,\Delta)$ is lc and $-(K_X+\Delta)$ is ample$/Z$.

Then, there exists an effective divisor $D \ge 0$ such that $D \equiv K_X+B+M$.
\end{cor}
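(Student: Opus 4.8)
The plan is to reduce Corollary \ref{nonvanishing-cor-2} to the weak nonvanishing Theorem \ref{weaknonvanishing}. That theorem has four hypotheses, one of which — that $Z$ has maximal Albanese dimension — is assumed here, so the task is to check that the relative log Fano type assumption forces the remaining three: that $K_X+B+M$ is abundant$/Z$, that $\mathcal{R}(X/Z,K_X+B+M)$ is a finitely generated $\mathcal{O}_Z$-algebra, and that $M'$ is semi-ample$/Z$. This is the content of the slogan that a fibration which is relatively of Fano type is in particular relatively abundant (and it is what makes Examples \ref{exa1} and \ref{exa2} instances of the corollary).

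First I would note that $\kappa(K_F+B_F+M_F)\ge 0$ makes $K_X+B+M$ $\mathbb{Q}$-effective over the generic point of $Z$, hence pseudo-effective$/Z$. Next I would unwind the Fano type hypothesis. Fixing $\Delta\ge 0$ with $(X,\Delta)$ log canonical and $-(K_X+\Delta)$ ample$/Z$, the relative minimal model program for generalized lc pairs over a Fano type base (from [\ref{BZ}] and [\ref{B-lc-flips}]) produces a relative good minimal model of $(X/Z,B+M)$ — equivalently the relative ample model — on which $K+B+M$ is semi-ample$/Z$. This simultaneously yields the finite generation of $\mathcal{R}(X/Z,K_X+B+M)$ and, since the relative Iitaka and numerical dimensions of $K_X+B+M$ are unchanged along this program and coincide on the output, the relative abundance of $K_X+B+M$. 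For the moduli part: the statement $D\equiv K_X+B+M$ is independent of the choice of data for the generalized pair, so I would replace $X'$ by a higher birational model that is again of (weak log) Fano type over $Z$ — a birational model of a Fano type variety over $Z$ is of Fano type over $Z$ — whereupon $M'$, being nef$/Z$, is semi-ample$/Z$ by the base point free theorem over a Fano type base; passing to the pullback of $M'$ changes neither $M$ nor $K_X+B+M$. With these three facts established, Theorem \ref{weaknonvanishing} applies and yields $D\ge 0$ with $D\equiv K_X+B+M$.

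The step I expect to be the main obstacle is that ``log Fano type'' is assumed here only with $(X,\Delta)$ log canonical rather than klt, so one cannot just quote the klt Fano-type toolkit ([\ref{BCHM}]-style finite generation and termination, and the base point free theorem for klt pairs). Instead one must work with the log canonical and generalized analogues — lc flips, dlt blow-ups, and existence of relative good minimal models over lc Fano type bases — and check that each of the three facts above survives in that generality. The truly delicate points are the two semi-ampleness claims (semi-ampleness$/Z$ of the output of the relative MMP, and of the pulled-back moduli divisor $M'$, over an lc Fano type base), since that is exactly where abundance-type input is being consumed; and one should keep in mind that ``MMP'', ``minimal model'' and ``abundant$/Z$'' are to be read in the generalized-pair sense of Section \ref{preliminaries} throughout.
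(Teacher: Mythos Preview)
Your approach is exactly the paper's: the corollary is stated immediately after Theorem \ref{weaknonvanishing} with the one-line justification ``since the relative abundance assumption is automatically satisfied when a fibration is relatively of Fano type, we immediately obtain the corollary below,'' and no further argument is given. Your expansion --- checking relative abundance, finite generation of $\mathcal{R}(X/Z,K_X+B+M)$, and semi-ampleness$/Z$ of $M'$ --- is precisely what that sentence is implicitly claiming, and your flag about the hypothesis being \emph{lc} rather than klt Fano type is a genuine subtlety that the paper does not address; the paper simply asserts the reduction without discussing it.
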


\begin{rem}
One may ask whether the weak nonvanishing holds if we weaken the assumption of relative semi-ampleness of $M'$ in Theorem \ref{nonvanishing}. The answer is probably no. It should be interesting to know any counter example.
\end{rem}

\subsection{Variant: fibrations over irregular varieties}
From an easy observation one finds that the same argument still works if we slightly weaken the assumption that the base variety $Z$ has maximal Albanese dimension. 

We say that $Z$ is irregular \emph{with Albanese fiber of general type} if a general fiber of $a:Z \rightarrow A=\mathrm{Alb}(Z)$ is of general type.

The following lemma is well-known.
\begin{lem}[cf. \text{[\ref{HM}, Corollary 2.11]}]
Let $f:X \rightarrow Z$ be an algebraic fiber space, and let $(X,B)$ be an lc pair. Assume that $Z$ is of general type and $K_X+B$ is big over $Z$. Then, $K_X+B$ is big.
\end{lem}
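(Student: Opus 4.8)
The plan is to deduce this from Viehweg's weak positivity for relative log canonical sheaves, which is exactly the mechanism behind [\ref{HM}, Corollary 2.11]: the assertion is the subadditivity of log Kodaira dimensions in the special case where the base is of general type, and in that case subadditivity is classical.

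First I would reduce to a smooth setting. Replacing $(X,B)$ by a log smooth model, then $Z$ by a resolution $\tilde Z\to Z$, and finally $X$ by a resolution $\tilde X$ of the main component of $X\times_Z\tilde Z$ (which is birational to $X$), we may assume that $X$ and $Z$ are smooth projective, $f$ is a morphism, and $\Supp B$ is simple normal crossing. This is harmless: a log smooth model $(Y,B_Y)$ satisfies $K_Y+B_Y=\pi^*(K_X+B)+E$ with $E\ge 0$ and $\pi$-exceptional, so $h^0(Y,m(K_Y+B_Y))=h^0(X,m(K_X+B))$ for all $m$ and the restriction to a general fibre is unchanged; and $\kappa(Z)=\dim Z$ is a birational invariant. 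Hence both the hypothesis ``$K_X+B$ big$/Z$'' and the desired conclusion ``$K_X+B$ big'' are preserved, and now $K_Z$ is big.

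Now $K_{X/Z}:=K_X-f^*K_Z$ makes sense, and for every sufficiently divisible $m$ one has $m(K_X+B)=m(K_{X/Z}+B)+mf^*K_Z$, so with $\mathcal F_m:=f_*\mathcal O_X(m(K_{X/Z}+B))$ the projection formula gives
\[
h^0(X,m(K_X+B))=h^0\!\big(Z,\ \mathcal F_m\otimes\mathcal O_Z(mK_Z)\big).
\]
Since $K_X+B$ is big$/Z$, the restriction $K_F+B_F=(K_X+B)|_F$ to a general fibre $F$ is big, so $\operatorname{rk}\mathcal F_m=h^0(F,m(K_F+B_F))$ grows like $m^{\dim X-\dim Z}$. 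By Viehweg's theorem the sheaf $\mathcal F_m$ is weakly positive over a dense open subset of $Z$ (in the genuinely lc case, where coefficients may equal $1$, one invokes the extension of weak positivity to lc pairs, e.g. Campana--P\v{a}un, Fujino). Feeding in that $K_Z$ is big --- via Viehweg's fibre-product/covering trick --- one concludes that $h^0(Z,\mathcal F_m\otimes\mathcal O_Z(mK_Z))$ grows at least like $(\operatorname{rk}\mathcal F_m)\cdot h^0(Z,mK_Z)\sim m^{\dim X-\dim Z}\cdot m^{\dim Z}=m^{\dim X}$. Therefore $K_X+B$ is big; equivalently $\kappa(K_X+B)\ge\kappa(K_F+B_F)+\kappa(Z)=\dim F+\dim Z=\dim X$.

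The main obstacle is precisely this last implication: converting the qualitative statement ``$\mathcal F_m$ is weakly positive and $K_Z$ is big'' into the quantitative one that $h^0(Z,\mathcal F_m\otimes\mathcal O_Z(mK_Z))$ attains the maximal growth rate $m^{\dim X}$. This is where Viehweg's machinery (and [\ref{HM}, Corollary 2.11]) does the real work, and it is why the lemma is not formal. A secondary technicality is ensuring weak positivity of $f_*\mathcal O_X(m(K_{X/Z}+B))$ in the lc rather than klt situation, for which one must cite the appropriate refinement rather than Viehweg's original statement.
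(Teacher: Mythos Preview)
Your outline is correct in spirit and follows the classical Viehweg route behind [\ref{HM}, Corollary 2.11], but the paper takes a different and shorter path. After the same reduction to the log smooth setting, the paper invokes Nakayama's subadditivity for the \emph{numerical} Kodaira dimension [\ref{Nakayama}, V.4.1]:
\[
\kappa_\sigma(K_X+B)\ \ge\ \kappa_\sigma(K_F+B_F)+\kappa_\sigma(K_Z)\ =\ \dim F+\dim Z\ =\ \dim X,
\]
and then uses [\ref{Nakayama}, V.2.7] to convert $\kappa_\sigma(K_X+B)=\dim X$ into $\kappa(K_X+B)=\dim X$. The point is that the delicate step you flag as ``the main obstacle'' --- extracting maximal growth of $h^0$ from weak positivity plus bigness of $K_Z$ --- is entirely bypassed: Nakayama's $\kappa_\sigma$-inequality already packages the requisite positivity (via his theory of $\omega$-sheaves), and the passage from maximal $\kappa_\sigma$ to maximal $\kappa$ is formal. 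Your approach is honest about where the work lies and would go through with the lc refinements of weak positivity you cite, but it is heavier; the paper's $\kappa_\sigma$ trick is the slicker way to close the argument here.
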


\begin{proof}
Replacing $(X,B)$ and $Z$ we can assume that $(X,B)$ is log smooth and $Z$ is smooth. Thanks to [\ref{Nakayama}, V. 4.1 Theorem] we deduce that $\kappa_\sigma (K_X+B) \ge \kappa_\sigma(K_F+B|_F) +\kappa_\sigma (K_Z) =\dim X$ which in turn implies that $\kappa(K_X+B) =\dim X$ by [\ref{Nakayama}, V. 2.7 Proposition].
\end{proof}

\begin{prop}\label{prop-irr-1}
Let $f:(X,B) \rightarrow Z$ be a surjective morphism from a projective lc pair $(X,B)$ to a normal variety $Z$. Assume that $Z$ is irregular with Albanese fiber of general type and that $K_X+B$ is relatively abundant over $Z$. Then, $K_X+B$ is abundant. Moreover, the subadditvity of Kodaira dimensions $\kappa(K_X+B) \geq \kappa(K_F+B_F)+ \kappa(Z)$ holds, where $F$ is a general fiber, $K_F+B_F= (K_X+B)|_F$, and $\kappa(Z)$ means the Kodaira dimension of a smooth model of $Z$. 
\end{prop}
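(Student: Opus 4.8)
The plan is to follow the proof of Theorem \ref{m-thm'} essentially verbatim, the only new ingredient being an analysis of the base $Z$ through its Albanese map. First, exactly as in the proof of Theorem \ref{m-thm'} — pass to a $\Q$-factorial dlt model, take a relative Iitaka fibration $g\colon X\to Y$ of $K_X+B$ over $Z$, run an LMMP$/Y$ to arrange $K_X+B\sim_{\Q,U}0$ over an open $U\subseteq Y$, subtract a sufficiently small multiple $\epsilon P$ of the vertical$/Y$ part of $\lfloor B\rfloor$, run a further LMMP$/Y$ to a good minimal model, and finally apply the canonical bundle formula Theorem \ref{c-bdle-form-2} — I would reduce to the situation of a klt pair $(T,B_T)$ with a morphism $T\to Z$ such that $K_T+B_T$ is big$/Z$, $\kappa(K_X+B)=\kappa(K_T+B_T)$, $\kappa_\sigma(K_X+B)=\kappa_\sigma(K_T+B_T)$ and $\dim T-\dim Z=\kappa(K_F+B_F)$. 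None of this uses anything about $Z$, so it remains to show that $(T,B_T)$ has a good minimal model and that $\kappa(K_T+B_T)\ge(\dim T-\dim Z)+\kappa(Z)$.

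Let $a\colon Z\to\mathrm{Alb}(Z)$ and let $Z\to V$ be the induced fibration onto (the normalisation of the Stein factorisation through) the Albanese image, so that $V$ has maximal Albanese dimension and the general fibre $W$ of $Z\to V$ is of general type. For the abundance: since $K_T+B_T$ is big$/Z$, its restriction to $T_W:=(T\to Z)^{-1}(W)$ is big$/W$, and as $W$ is of general type, [\ref{HM}, Corollary 2.11] (the lemma recalled just above) gives that $(K_T+B_T)|_{T_W}$ is big, i.e. $K_T+B_T$ is big$/V$. Applying [\ref{BC}, Theorem 1.1] to $(T,B_T)\to V$ — legitimate because $V$ has maximal Albanese dimension — shows that $(T,B_T)$ has a good minimal model; in particular $K_T+B_T$, hence $K_X+B$, is abundant.

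For the subadditivity, let $Z\dashrightarrow Z^\ast$ be the Iitaka fibration of $Z$, with $\dim Z^\ast=\kappa(Z)$ and general fibre $Z_0$ satisfying $\kappa(Z_0)=0$. The crucial structural point — and this is where the hypothesis that the general Albanese fibre of $Z$ is of general type is genuinely used — is that $Z^\ast$ is of general type and that $Z_0$ has maximal Albanese dimension. (This is vacuous when $Z$ itself is of general type, since then $Z_0$ is a point and $Z^\ast=Z$; in general it should follow from Kawamata's structure theory for the Albanese map together with the known cases of the Iitaka conjecture over bases of maximal Albanese dimension [\ref{Kawamata-2}], [\ref{CH}], [\ref{BCZ}].) Granting this, factor $T\to Z\dashrightarrow Z^\ast$. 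Subadditivity over the base $Z^\ast$ of general type (cf. the discussion in Section \ref{rem-conj}) gives
$$
\kappa(K_T+B_T)\ \ge\ \kappa((K_T+B_T)|_{T_{Z_0}})+\kappa(Z^\ast)\ =\ \kappa((K_T+B_T)|_{T_{Z_0}})+\kappa(Z),
$$
where $T_{Z_0}$ is a general fibre of $T\dashrightarrow Z^\ast$, which maps to $Z_0$. Since $(K_T+B_T)|_{T_{Z_0}}$ is big$/Z_0$, $Z_0$ has maximal Albanese dimension and $\dim T_{Z_0}-\dim Z_0=\dim T-\dim Z$, Theorem \ref{m-thm'} applied to $(T_{Z_0},B_{T_{Z_0}})\to Z_0$ yields $\kappa((K_T+B_T)|_{T_{Z_0}})\ge(\dim T_{Z_0}-\dim Z_0)+\kappa(Z_0)=\dim T-\dim Z$. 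Combining, $\kappa(K_X+B)=\kappa(K_T+B_T)\ge(\dim T-\dim Z)+\kappa(Z)=\kappa(K_F+B_F)+\kappa(Z)$.

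The main obstacle is precisely this structural statement about the Iitaka fibration of $Z$. It is indispensable: merely transporting relative bigness to the Albanese quotient $V$ only yields the weaker bound $\kappa(K_X+B)\ge\kappa(K_F+B_F)+\dim W+\kappa(V)$, and $\dim W+\kappa(V)$ can be strictly smaller than $\kappa(Z)$ — for example when $Z=Z_1\times E$ with $Z_1$ a surface of general type of irregularity one and $E$ an elliptic curve, where $\kappa(Z)=2$ but $\dim W+\kappa(V)=1$. Everything else in the argument is a transcription of the proof of Theorem \ref{m-thm'} together with standard subadditivity of Kodaira dimensions over bases of general type.
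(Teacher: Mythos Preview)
Your reduction to a klt pair $(T,B_T)$ with $K_T+B_T$ big$/Z$, and your abundance argument---pass to the Stein factorisation $Z'$ of the Albanese image, observe that $K_T+B_T$ is big$/Z'$ by the preceding lemma, then apply [\ref{BC}, Theorem~1.1] over the maximal-Albanese-dimension base $Z'$---match the paper exactly. The paper produces its klt pair via the perturbation of Remark~\ref{rem-klt} rather than by running an LMMP and invoking Theorem~\ref{c-bdle-form-2}, but this is immaterial.

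For subadditivity the paper writes only ``by an easy calculation we also deduce the subadditivity of Kodaira dimensions''. The natural reading---the bound $\kappa(K_T+B_T)\ge(\dim T-\dim Z')+\kappa(Z')=\kappa(K_F+B_F)+\dim W+\kappa(Z')$ obtained from Theorem~\ref{m-thm'} or [\ref{BC}] over $Z'$---is, as your example $Z=Z_1\times E$ correctly shows, in general strictly weaker than $\kappa(K_F+B_F)+\kappa(Z)$; so either the paper intends something else or is imprecise at this point. Your route through the Iitaka fibration $Z\dashrightarrow Z^\ast$ is genuinely different from anything in the paper, and it has a real gap. Of your two structural claims, the one asserting that the general Iitaka fibre $Z_0$ has maximal Albanese dimension is in fact provable: since $K_Z$ is big$/Z'$, the Iitaka map restricted to a general fibre of $Z\to Z'$ is generically finite, hence $Z_0\to Z'$ is generically finite onto its image, and composing with the generically finite map $Z'\to A$ shows $Z_0$ is generically finite onto its Albanese image. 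But the claim that $Z^\ast$ is of general type is not justified by the references you cite and is not a general feature of Iitaka fibrations: the canonical bundle formula only gives that $K_{Z^\ast}+B_{Z^\ast}+M_{Z^\ast}$ is big, not $K_{Z^\ast}$ itself. Without this, your subadditivity-over-a-general-type-base step does not apply, and the argument does not close.
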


\begin{proof}
Let $Z \rightarrow Z' \rightarrow A$ be the Stein factorization. By assumption $K_Z$ is big over $Z'$. Let $g: X \dashrightarrow Y$ be a relative Iitaka fibration of $K_X+B$ over $Z$. Using a similar argument in the proof of Theorem \ref{m-thm} we get a generalized pair $(Y,B_Y+M_Y)$ such that $K_Y+B_Y+M_Y$ is big$/Z$, $\kappa(K_Y+B_Y+M_Y)=\kappa(K_X+B)$ and $\kappa_\sigma(K_Y+B_Y+M_Y)=\kappa_\sigma(K_X+B)$. If we modify the coefficients as in Remark \ref{rem-klt} so that $(Y,\Delta_Y)$ is klt and $K_Y+\Delta_Y \sim_\Q K_Y+B_Y^\epsilon+M_Y^\epsilon$, then we have that $K_Y+\Delta_Y$ is big over $Z'$ by the previous lemma. It follows that $\kappa(K_Y+\Delta_Y)=\kappa(K_X+B)$ and $\kappa_\sigma(K_Y+\Delta_Y)=\kappa_\sigma(K_X+B)$. The abundance of $K_X+B$ in turn holds as the abundance of $K_Y+\Delta_Y$ holds. By an easy calculation we also deduce the subadditivity of Kodaira dimensions.
\end{proof}

In the same way Proposition \ref{m-prop'} can be generalized as below.

\begin{prop}\label{prop-irr-2}
	Let $f:(X,B) \rightarrow Z$ be a surjective morphism from a projective lc pair $(X,B)$ to a normal variety $Z$. Assume that $Z$ is irregular with Albanese fiber of general type,  and that $(F,B_F)$ has a good minimal model where $F$ is the general fiber and $K_F+B_F=(K_X+B)|_F$. Then, $K_X+B$ is abundant. Moreover, if the canonical ring $\mathcal{R}(K_X+B)$ is finitely generated, for example, if $(X,B)$ is klt, or $\kappa(K_X+B)=0$, then $(X,B)$ has a good minimal model. 
\end{prop}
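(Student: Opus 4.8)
The plan is to run the argument of Proposition~\ref{m-prop'} essentially verbatim, with Theorem~\ref{m-thm'} replaced by Proposition~\ref{prop-irr-1}. First I would observe that relative abundance of $K_X+B$ over $Z$ is automatic under the stated hypotheses: since $(F,B_F)$ has a good minimal model, $K_F+B_F$ is semi-ample on that model, hence abundant, and both $\kappa$ and $\kappa_\sigma$ are unchanged under the birational maps involved. By the semicontinuity of $\kappa_\sigma$ on the fibres of $f$ (the numerical dimension of a special fibre can only be larger than that of a general one), together with the fact that $\kappa$ of a very general fibre equals $\kappa$ of the generic fibre, I deduce that $K_G+B_G$ is abundant, where $G$ is the generic fibre of $f$; that is, $K_X+B$ is relatively abundant over $Z$.

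Next, because $Z$ is irregular with Albanese fibre of general type and $K_X+B$ is relatively abundant over $Z$, Proposition~\ref{prop-irr-1} applies and yields at once that $K_X+B$ is abundant, together with the subadditivity $\kappa(K_X+B)\ge\kappa(K_F+B_F)+\kappa(Z)$. This gives the first part of the statement.

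For the last assertion, suppose $\mathcal{R}(K_X+B)$ is finitely generated; this holds in particular when $(X,B)$ is klt (finite generation of adjoint rings) and also when $\kappa(K_X+B)=0$. Choosing a sufficiently high log resolution $h\colon Y\to X$, finite generation gives $h^{*}(K_X+B)=P+N$ with $P$ semi-ample and $N\ge 0$ the asymptotic fixed divisor; the abundance of $K_X+B$ forces $N=N_\sigma(h^{*}(K_X+B))$ by [\ref{Lehmann}, Proposition~6.4], so $K_X+B$ birationally admits a Nakayama--Zariski decomposition whose positive part is semi-ample, in particular nef. Then [\ref{BH-I}, Theorem~1.1] (equivalently Lemma~\ref{lem-NZ}) produces a good log minimal model of $(X,B)$, which finishes the proof.

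I do not expect any essential difficulty here: the argument is a transplant of the proof of Proposition~\ref{m-prop'}, and the only points requiring care are the semicontinuity passage from the general to the generic fibre for abundance and the bookkeeping of $\kappa$ and $\kappa_\sigma$ under the birational modifications. If anything is delicate, it is merely verifying that the hypotheses of Proposition~\ref{prop-irr-1} — relative abundance together with $Z$ irregular with Albanese fibre of general type — are met exactly as stated, which is immediate from the reduction above.
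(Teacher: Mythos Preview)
Your proposal is correct and matches the paper's approach exactly: the paper gives no explicit proof for this proposition, only the remark ``In the same way Proposition~\ref{m-prop'} can be generalized,'' which is precisely the transplant you describe---replacing Theorem~\ref{m-thm'} by Proposition~\ref{prop-irr-1} in the argument of Proposition~\ref{m-prop'}. Your elaboration of the semicontinuity step and the Nakayama--Zariski/finite-generation step is just a spelling-out of that same argument.
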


\vspace{0.3cm}
\section{A remark on Conjecture \ref{conj}}\label{rem-conj}

Finally we briefly discuss Conjecture \ref{conj} without the assumption of relative abundance. Let $f:(X,B) \rightarrow Z$ be a surjective morphism from a projective lc pair $(X,B)$ of dimension $d$ to a normal variety $Z$. Assume that $Z$ has maximal Albanese dimension. Replacing $(X,B)$ with a dlt blow-up, we can assume that $(X,B)$ is $\Q$-factorial dlt. 

\textbf{An inductive argument.}
Now we assume that Conjecture \ref{conj} holds in dimension $\le d-1$. Let $Z \rightarrow Z' \rightarrow A$ be the Stein factorization of its Albanese map. Following [\ref{Kawamata-2}, Theorem 13] there exist an abelian variety $B$, \'{e}tale covers $\widetilde{Z'}$ and $\widetilde{B}$ of $Z'$ and $B$ respectively, and a normal variety $\widetilde{C}$ such that \\
(1). $\widetilde{C}$ is finite over $C:=A/B$; \\
(2). $\widetilde{Z'}$ is isomorphic to $\widetilde{B}\times \widetilde{C}$; \\
(3). $\kappa(\widetilde{C})=\dim \widetilde{C} =\kappa(Z)$.\\
Let $\widetilde{X'}=X' \times_{Z'} \widetilde{Z'} $ and $\widetilde{Z}=Z \times_{Z'} \widetilde{Z'} $. $\widetilde{X'}$ and $\widetilde{Z}$ are \'{e}tale covers of $X'$ and $Z$ respectively. Replacing $X'$, $Z$ and $Z'$ with $\widetilde{X'}$, $\widetilde{Z}$ and $\widetilde{Z'}$, we can assume that $Z'=\widetilde{B} \times \widetilde{C}$ (see [\ref{BC}, Remark 5.1 and the proof of Theorem 1.1, Step 6] and [\ref{Kawamata-2}] for details).

If $\kappa(K_X+B)\geq 1$, then we are able to show the subadditivity of Kodaira dimensions 
$$
\kappa(K_X+B) \geq \kappa(K_F+B_F) +\kappa(Z)
$$
with an inductive argument as follows. Take a relative Iitaka fibration $g:X \dashrightarrow Y$ over $Z$, and an Iitaka fibration $h': Y \dashrightarrow T$ of $K_X+B$. We can assume $g$ and $h$ are morphisms and we have the commutative diagram.
$$
\xymatrix{
	X \ar[dd]_{f}\ar[dr]_{g}\ar[drr]^{h} & \\
	& Y  \ar[r]_{h'}\ar[dl]_{f'}  & T   \\
	Z  \ar[r]_{} & Z'=\widetilde{B}\times \widetilde{C} } 
$$

Let $G$ be a general fiber of $h$ and let $G'$ be the image of $G$ in $Y$. Because $\kappa(K_G+B_G)=0$, by an inductive assumption we deduce $\kappa(K_G+B_G/Z)=0$ and $\kappa(V)=0$ where $V$ is the normalization of the image of $G$ in $Z$. In particular, $G'$ is generically finite over $Z$, and $G$ is mapped to a point of $\widetilde{C}$ which in turn implies that $\dim G' \leq \dim Z' -\dim \widetilde{C} =\dim Z- \dim \widetilde{C} $. We therefore have 
\begin{align*}
\kappa(K_X+B) &=\dim X -\dim G \\
&=\dim Y -\dim G'\\
&\geq \dim Y - \dim Z +\dim \widetilde{C}  \\
&= \kappa(K_F+B_F) +\dim \kappa(Z)
\end{align*}

If we assume $\kappa(K_X+B)\leq 0$ and $\kappa(K_F+B_F) \geq 0$, then we deduce that $\kappa(K_X+B)=0$ by Lemma \ref{nonvanishing}. We can therefore reduce Conjecture \ref{conj} to the case when $\kappa(K_X+B)=0$. Let $f:X \rightarrow \widetilde{Z} \rightarrow Z$ be the Stein factorization. By [\ref{Kawamata-2}, Corollary 9] $\kappa(\widetilde{Z}) \geq \kappa(Z)$, hence it suffices to show that when $f$ is a surjective morphism with connected fibers if we replace $Z$ with $\widetilde{Z}$. As we argued before, we can assume that $Z'=\widetilde{B} \times \widetilde{C}$ where $\widetilde{C}$ is finite over an abelian variety and  $\kappa(\widetilde{C})=\dim \widetilde{C} =\kappa(Z)$. So, if $\kappa(K_X+B)=0$, then the image of $X$ in $\widetilde{C}$ is a point by [\ref{HM}, Corollary 2.11(2)] which in turn implies that $\kappa(Z)=0$. Because the main theorem of [\ref{Kawamata-2}] asserts that the Albanese map $\alpha: Z \rightarrow A$ is an algebraic fiber space, we are allowed to replace $Z$ with Albanese variety $A$. So, we can assume that $Z=A$ is an abelian variety.

\begin{rem}\label{remark}
	From the discussion above, we see that Conjecture \ref{conj} holds if it holds for any surjective morphism $f: (X,B) \rightarrow Z$ with connected fibers where $(X,B)$ is $\Q$-factorial dlt with $\kappa(K_X+B)=0$ and $Z=A$ is an abelian variety. In particular, Conjecture \ref{conj} holds for klt pairs by [\ref{CP}]  [\ref{HPS}].
\end{rem}

\textbf{Conjecture \ref{conj} with positivity on the boundary.}
As we mentioned, extending results from klt pairs to lc pairs is usually much more difficult than it sounds. Hence we consider adding an appropriate extra assumption. Note that if we suppose that $K_X+B$ is big$/Z$, then Conjecture \ref{conj} follows directly from Theorem \ref{m-thm}. So, we would like to see the feasibility of proving the conjecture if we put an extra bigness assumption on $B$. Unfortunately, the next proposition indicates that adding the bigness assumption will not decrease the difficulty of Conjecture \ref{conj}. 

To begin with, let us recall the global ACC Theorem [\ref{HMX}, Theorem D].

\begin{thm}\label{ACC-2}
	Fix a positive integer $n$ and a set $I \subset [0, 1]$, which satisfies the
	DCC.
	Then there is a finite set $I_0 \subset I$ with the following property: \\
	If $(X, \Delta)$ is an lc pair such that \\
	(i) $X$ is projective of dimension $n$, \\
	(ii) the coefficients of $\Delta$ belong to $I$, and \\
	(iii) $K_X + \Delta$ is numerically trivial, \\
	then the coefficients of $\Delta$ belong to $I_0$.
\end{thm}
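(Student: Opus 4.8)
The plan is to argue by contradiction, along the lines of Hacon--McKernan--Xu, running the argument by induction on $n$ and simultaneously with the ACC for log canonical thresholds in dimension $\le n$. Suppose the statement fails for some $n$ and some DCC set $I$. Since $I$ satisfies the DCC, after relabelling the components one extracts a sequence of lc pairs $(X_i,\Delta_i)$ with $X_i$ projective of dimension $n$, $K_{X_i}+\Delta_i\equiv 0$, coefficients in $I$, and a distinguished component $S_i$ of $\Delta_i$ whose coefficient $a_i$ is strictly increasing with limit $a\in(0,1]$. Passing to a $\Q$-factorial dlt modification --- which only adjoins the value $1$ to $I$ and hence preserves the DCC --- one may assume each $(X_i,\Delta_i)$ is $\Q$-factorial dlt. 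Since $K_{X_i}+\Delta_i\equiv 0$, the MMP on $K_{X_i}+\Delta_i-a_iS_i$ is automatically a $(K_{X_i}+\Delta_i)$-trivial ``$-S_i$-MMP'' (here $-(K_{X_i}+\Delta_i-a_iS_i)\equiv a_iS_i\ge 0$), and it terminates with a Mori fibre space $X_i'\to Z_i$ on which the birational transform $S_i'$ of $S_i$ is relatively ample; throughout, $(X_i',\Delta_i')$ stays dlt with $K_{X_i'}+\Delta_i'\equiv 0$, $S_i'$ is not contracted, and $a_i$ is unchanged. If $\dim Z_i>0$, restrict to a general fibre $F_i$: then $(F_i,\Delta_i'|_{F_i})$ is lc of dimension $<n$ with coefficients in $I$, $K_{F_i}+\Delta_i'|_{F_i}\equiv 0$, and $S_i'|_{F_i}$ is a nonzero reduced divisor of coefficient $a_i$, so the inductive hypothesis in lower dimension forces $a_i$ to be eventually constant --- a contradiction. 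Hence one may assume $\dim Z_i=0$, so $\rho(X_i')=1$, every component of $\Delta_i'$ is ample, and $-K_{X_i'}\equiv\Delta_i'$ is ample: $X_i'$ is a Fano variety.

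If $\lfloor\Delta_i'\rfloor\ne 0$, choose a component $T_i$. As $T_i$ and $S_i'$ are both ample they meet in codimension one (for $n\ge2$; $n=1$ is elementary), so adjunction $(K_{X_i'}+\Delta_i')|_{T_i}=K_{T_i}+\Delta_{T_i}\equiv0$ puts us in dimension $n-1$ with $\Delta_{T_i}\ge0$ whose coefficients lie in the DCC set $D(I)$ of numbers $\frac{m-1}{m}+\frac1m\sum_j r_jb_j$, $m,r_j\in\N$, $b_j\in I$. By the inductive hypothesis applied to $D(I)$ these coefficients lie in a fixed finite set, and since $a_i$ occurs with a positive integer weight in one of them (because $S_i'|_{T_i}\ne0$), the explicit adjunction formula confines $a_i$ to finitely many values --- again a contradiction. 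We are thus reduced to the case that $(X_i',\Delta_i')$ is a \emph{klt} pair of Picard number one with $-K_{X_i'}$ ample and $K_{X_i'}+\Delta_i'\equiv0$.

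This klt, Picard-number-one Fano case is where I expect the real difficulty: adjunction is no longer available, and one must invoke the ACC for log canonical thresholds in dimension $\le n$, which is exactly why global ACC and ACC for thresholds are proved together by one induction. Roughly, $-K_{X_i'}$ is semi-ample, so cutting $X_i'$ down by general members of $|{-mK_{X_i'}}|$ and by general very ample divisors reduces the bounding of $a_i$ to bounding a log canonical threshold of a pair with coefficients in $I$ against a fixed Cartier divisor; ACC for those thresholds, together with the lower-dimensional global ACC applied to the cut-down pairs and a boundedness input for the Fano-type varieties that intervene (an $\epsilon$-lc boundedness statement, available once the minimal log discrepancy has also been controlled along the same induction), forces $a_i$ to stabilise, contradicting strict monotonicity. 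Making this final case precise --- more to the point, disentangling the mutual dependence between global ACC, ACC for log canonical thresholds, and boundedness of the relevant Calabi--Yau and Fano pairs --- is the hard part; everything else is formal MMP, adjunction, and the lower-dimensional induction.
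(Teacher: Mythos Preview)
The paper does not prove this theorem. It is stated as a quotation of the global ACC theorem of Hacon--McKernan--Xu, with explicit attribution to [\ref{HMX}, Theorem D], and is then used as a black box in the proof of Proposition~\ref{relation-conj}. There is no proof in the paper to compare your proposal against.

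That said, your sketch does follow the broad contours of the Hacon--McKernan--Xu argument: the reduction to a Mori fibre space via a $(K+\Delta)$-trivial MMP, the dimension drop when the base is positive-dimensional, adjunction onto a reduced component, and the residual klt Fano case handled in tandem with ACC for log canonical thresholds. Your own caveat in the last paragraph is accurate: the final klt Picard-number-one case is not settled by the rough outline you give. In HMX the global ACC and the ACC for lc thresholds are proved by a simultaneous induction, and the klt case is handled not by ``cutting down by general members of $|-mK|$'' but by a careful argument involving effective birationality of pluri-log-canonical maps and boundedness of log general type pairs. As written, your final paragraph is more of a plausibility statement than a proof, so if this were meant to be a self-contained argument it would have a genuine gap there; but since the paper itself only cites the result, no proof is required here.
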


\begin{prop}\label{relation-conj}
	The following statements are equivalent.
	
	(1). Conjecture \ref{conj} holds for lc pairs $(X,B)$ in dimension $\leq n$.
	
	(2). Conjecture \ref{conj} holds for $\mathbb{Q}$-factorial dlt pairs $(Y,B_Y)$ in dimension $\leq n+1$ where $B_Y$ is big$/Z$. 
	
	(3). Conjecture \ref{conj} holds for $\mathbb{Q}$-factorial dlt pairs $(Y,B_Y)$ in dimension $\leq n+1$ where $B_Y$ is big. 
\end{prop}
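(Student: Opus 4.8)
The plan is to prove the cycle of implications $(1)\Rightarrow(2)\Rightarrow(3)\Rightarrow(1)$. The implication $(2)\Rightarrow(3)$ is immediate: a $\Q$-factorial dlt pair $(Y,B_Y)$ with $B_Y$ big is in particular one with $B_Y$ big$/Z$, so the class of pairs in $(3)$ is contained in the class of pairs in $(2)$.

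The substantive construction is $(3)\Rightarrow(1)$. Let $(X,B)$ be lc of dimension $\le n$, $f\colon(X,B)\to Z$ surjective, $Z$ of maximal Albanese dimension; after replacing $(X,B)$ by a $\Q$-factorial dlt model we may assume $(X,B)$ is $\Q$-factorial dlt, which changes neither $\kappa(K_X+B)$, nor $\kappa(K_F+B_F)$, nor $\kappa(Z)$. Fix a very ample divisor $A$ on $X$, put $\mathcal{E}=\mathcal{O}_X\oplus\mathcal{O}_X(A)$, and let $p\colon Y=\PP_X(\mathcal{E})\to X$ be the associated $\PP^1$-bundle, with its two disjoint sections $S_1,S_2$ corresponding to the two summands. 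Set $B_Y=S_1+S_2+p^{*}B$ and $g=f\circ p\colon Y\to Z$. Then: (i) $(Y,B_Y)$ is $\Q$-factorial dlt of dimension $\le n+1$, since $p$ is smooth (so $\Q$-factoriality is preserved) and $(Y,B_Y)$ is log smooth wherever $(X,B)$ is, as $S_1,S_2$ are disjoint sections transverse to the fibres and to $p^{-1}(\Supp B)$; (ii) relative adjunction on the $\PP^1$-bundle gives $K_{Y/X}+S_1+S_2\sim_\Q 0$, hence $K_Y+B_Y\sim_\Q p^{*}(K_X+B)$ and $\kappa(K_Y+B_Y)=\kappa(K_X+B)$; (iii) $B_Y$ is big, because $S_1+S_2\sim_\Q-K_{Y/X}$ and $h^{0}(Y,-mK_{Y/X})$ grows like $m^{\dim Y}$ by the projection formula and $\mathcal{E}=\mathcal{O}_X\oplus\mathcal{O}_X(A)$ with $A$ ample, so $-K_{Y/X}$ is big and $B_Y\ge S_1+S_2$; (iv) the general fibre of $g$ is the $\PP^1$-bundle $p^{-1}(F)$ over the general fibre $F$ of $f$, and restricting (ii) gives $(K_Y+B_Y)|_{p^{-1}(F)}\sim_\Q(p|_{p^{-1}(F)})^{*}(K_F+B_F)$, so these restrictions have equal Kodaira dimension. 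Applying $(3)$ to $g\colon(Y,B_Y)\to Z$ yields $\kappa(K_X+B)=\kappa(K_Y+B_Y)\ge\kappa(K_F+B_F)+\kappa(Z)$, which is Conjecture \ref{conj} for $(X,B)$.

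For $(1)\Rightarrow(2)$, let $(Y,B_Y)$ be $\Q$-factorial dlt of dimension $\le n+1$ with $B_Y$ big$/Z$ and $Z$ of maximal Albanese dimension. If $\dim Y\le n$ this is a case of $(1)$, so assume $\dim Y=n+1$. We run the inductive argument of Section \ref{rem-conj}, observing that it invokes Conjecture \ref{conj} only in dimension $\le n$, i.e. exactly hypothesis $(1)$: after passing to suitable \'etale covers we reduce to the case where the base of the Albanese fibration of $Z$ splits as $\widetilde B\times\widetilde C$ with $\widetilde C$ of general type and $\kappa(\widetilde C)=\dim\widetilde C=\kappa(Z)$. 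If $\kappa(K_Y+B_Y)\ge 1$, the relative-Iitaka-fibration-plus-Iitaka-fibration argument of Section \ref{rem-conj} gives the desired inequality. If $\kappa(K_F+B_F)=-\infty$ the inequality is trivial; otherwise $\kappa(K_F+B_F)\ge 0$, so Lemma \ref{nonvanishing} forces $\kappa(K_Y+B_Y)=0$, and then [\ref{HM}, Corollary 2.11(2)] shows the image of $Y$ in $\widetilde C$ is a point, whence $\kappa(Z)=\dim\widetilde C=0$ and we may replace $Z$ by its Albanese variety $A$, an abelian variety. In this residual situation — $g\colon Y\to A$ with $A$ abelian, $\kappa(K_Y+B_Y)=0$, $B_Y$ big$/A$ — the bigness of $B_Y$ enters: a suitable perturbation of $B_Y$ (within its $\Q$-linear equivalence class over $A$, twisting by an ample divisor pulled back from $A$ so as to control the perturbation globally) produces a klt pair $(Y,\Delta_Y)$ over $A$ with $(K_Y+\Delta_Y)|_F\sim_\Q K_F+B_F$, to which Conjecture \ref{conj} for klt pairs over abelian varieties ([\ref{CP}], [\ref{HPS}]) applies, giving $\kappa(K_F+B_F)\le 0$; combined with $\kappa(K_F+B_F)\ge 0$ and $\kappa(Z)=0$ this is Conjecture \ref{conj} for $(Y,B_Y)$.

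The main obstacle is this residual case of $(1)\Rightarrow(2)$: one must make the bigness hypothesis do real work over an abelian base where $\kappa(K_Y+B_Y)$ has already collapsed to $0$. The delicate point is to choose the perturbation $\Delta_Y$ so that simultaneously $(Y,\Delta_Y)$ is genuinely klt — which forces one to dismantle any horizontal component of $\lfloor B_Y\rfloor$, handled by divisorial adjunction to such a component together with the bigness of $B_Y$ — and so that the passage from $B_Y$ to $\Delta_Y$ does not inflate the relevant Kodaira (or numerical) dimension; this is precisely where the distinction between "$B_Y$ big$/Z$" and "$B_Y$ big" is absorbed, and it is the technical heart of the proof. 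The remaining ingredients ($\Q$-factorial dlt reductions, the $\PP^1$-bundle adjunction computation, and the bookkeeping in the inductive argument) are routine.
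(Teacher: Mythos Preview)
Your constructions for $(2)\Rightarrow(3)$ and $(3)\Rightarrow(1)$ are correct; the $\PP^1$-bundle with two sections is a clean variant of the paper's cone construction, and all four verifications (i)--(iv) go through. For $(1)\Rightarrow(2)$, your reduction via Section~\ref{rem-conj} to the residual case ($g\colon Y\to A$ abelian, $\kappa(K_Y+B_Y)=0$, $B_Y$ big$/A$) is legitimate, but your treatment of that residual case has a genuine gap.

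The problem is the perturbation $\Delta_Y$. You need $(Y,\Delta_Y)$ klt \emph{and} $\kappa(K_Y+\Delta_Y)\le 0$, since Cao--P\u{a}un only yields $\kappa(K_Y+\Delta_Y)\ge\kappa(K_F+B_F)$. But $B_Y$ is only big$/A$, not globally big, so you cannot in general find $\Delta_Y\sim_\Q B_Y$ with $(Y,\Delta_Y)$ klt; and if you twist by $g^*H$ with $H$ ample on $A$ to force global bigness, then $\kappa(K_Y+B_Y+g^*H)$ can jump (e.g.\ $Y=A\times\PP^1$, $B_Y=\{0\}+\{\infty\}$). Your fallback, adjunction to a horizontal component $S\subset\lfloor B_Y\rfloor$, gives an lc pair $(S,B_S)$ of dimension $\le n$, but there is no a priori inequality $\kappa(K_S+B_S)\le\kappa(K_Y+B_Y)$; restriction can \emph{raise} Kodaira dimension. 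So neither branch of your residual argument closes.

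The paper does $(1)\Rightarrow(2)$ by a different mechanism that manufactures exactly the missing comparison. It does not pass through the reduction of Section~\ref{rem-conj} at all: with $P_Y=\lfloor B_Y\rfloor$ it runs an LMMP$/A$ on the klt perturbation $K_Y+B_Y-\epsilon P_Y$. If this is pseudo-effective$/A$, [\ref{BCHM}] plus Corollary~\ref{m-cor'} give a global good minimal model, hence abundance and subadditivity via Lemma~\ref{lem1}. If not, the LMMP ends in a Mori fibre space $Y'\to T$, and the global ACC (Theorem~\ref{ACC-2}) forces $K_{Y'}+B_{Y'}\sim_\Q 0/T$; a further LMMP produces a model $W''\to T'$ with $K_{W''}+B_{W''}\sim_\Q 0/T'$. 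This relative triviality is the key: now $K_{W''}+B_{W''}$ is pulled back from $T'$, so restricting to a component $S''$ horizontal over $T'$ \emph{preserves} both $\kappa$ and $\kappa(\cdot/A)$, and one applies hypothesis~(1) to the $n$-dimensional pair $(\widetilde S,B_{\widetilde S})$. The point you were missing is that adjunction only controls Kodaira dimension once $K+B$ has been arranged to be trivial along the fibres of an auxiliary fibration, and producing that fibration requires the Mori-fibre-space step and the global ACC.
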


\begin{proof}
First we prove (1) implies (2). Assume that Conjecture \ref{conj} holds for lc pairs $(X,B)$ in dimension $\leq n-1$. Let $f:(Y,B_Y) \rightarrow Z$ be a surjective morphism from a projective dlt pair of dimension $n$ to a normal variety $Z$ with maximal Albanese dimension. In addition we suppose that $B_Y$ is big$/Z$.

Let $P_Y= \lfloor B_Y \rfloor$ and $A:=\mathrm{Alb}(Z)$. Pick a sufficiently small rational number $\epsilon >0$. Run an LMMP$/A$ with scaling of an ample divisor on $K_Y+B_Y-\epsilon P_Y$. If $K_Y+B_Y-\epsilon P_Y$ is pseudo-effective$/A$, then this LMMP ends up with a good minimal model $(Y',B_{Y'}-\epsilon P_{Y'})$ by [\ref{BCHM}] which is also a good log minimal model globally by Corollary \ref{m-cor'}. Moreover, by Lemma \ref{lem1} $\kappa(K_Y+B_Y)=\kappa(K_{Y'}+B_{Y'} -\epsilon P_{Y'})$ and $\kappa_\sigma(K_Y+B_Y)=\kappa_\sigma(K_{Y'}+B_{Y'} -\epsilon P_{Y'})$ provided $\epsilon$ sufficiently small. So, $K_Y+B_Y$ is abundant as $K_{Y'}+B_{Y'} -\epsilon P_{Y'}$ is abundant. In particular, since $\kappa(K_Y+B_Y/Z)=\kappa(K_Y+B_Y/A)=\kappa(K_{Y}+B_{Y} -\epsilon P_{Y}/Z)$, the subadditivity of Kodaira dimensions $\kappa(K_Y+B_Y) \geq \kappa(K_F+B_F)+ \kappa(Z)$ holds.

If $K_Y+B_Y-\epsilon P_Y$ is not pseudo-effective$/A$, then the LMMP above ends up with a Mori fiber space $g: (Y',B_{Y'}-\epsilon P_{Y'}) \rightarrow T$. If we choose $\epsilon$ sufficiently small, then by the global ACC of log canonical thresholds Theorem \ref{ACC-2} we have that $(Y',B_{Y'})$ is lc and $K_{Y'}+B_{Y'} \sim_\mathbb{Q} 0/T$. It follows that some component of $P_{Y'}$ is horizontal$/T$. Let $W$ be a common log resolution of $(Y,B_Y)$ and $(Y',B_{Y'})$. Write $p:W\rightarrow Y$ and $K_W+B_W=p^\ast(K_Y+B_Y)+E_W$ where $(W,B_W)$ is a log smooth model of $(Y,B_Y)$. Now we can run an LMMP$/Y'$ on $K_W+B_W$ which terminates at a good minimal model $(W',B_{W'})$ by [\ref{B-lc-flips}, Theorem 3.5, Theorem 1.1] because $(Y',B_{Y'})$ is $\mathbb{Q}$-factorial lc. More precisely, $K_W+B_W =q^\ast(K_{Y'}+B_{Y'}) +E_+ -E_-$ where $E_+$ and $E_-$ are effective $\mathbb{Q}$-divisors. Since an LMMP$/Y'$ contracts $E_+$ after finite steps, we have that $K_{W'}+B_{W'}+E'_- =q'^\ast(K_{Y'}+B_{Y'})$. 
$$
\xymatrix{
	W \ar[d]_{p}\ar@{-->}[r]^{}\ar[dr]_{q} &  W'\ar[d]^{q'} &\\
	Y  \ar@{-->}[r]^{}\ar[d]_{f}  & Y' \ar[r]^{g}\ar[d]_{}   & T \ar[dl]^{} \\
	Z \ar[r]^{} & A &} 
$$
Since $(W',B_{W'}+E'_-)$ is lc and $K_{W'}+B_{W'}+E'_- \sim_\mathbb{Q} 0 /T$, we can run an LMMP$/T$ on $K_{W'}+B_{W'}$ which terminates at a good minimal model $W'' \rightarrow T'$ by [\ref{B-lc-flips}]. As we pointed out earlier, some component of $P_{Y'}$ is horizontal$/T$. We denote its birational transform on $W$ by $S$ and its birational transform on $W''$ by $S''$. Pick a common resolution $\widetilde{S}$ of $S$ and $S''$. We obtain the following commutative diagram of projective morphisms. 
$$
\xymatrix{
	& \widetilde{S} \ar[dl] \ar[dr]&\\
	S \ar[d] \ar@{-->}[rr] &&  S'' \ar[d] \ar[dr]\\
	W \ar[d]_{p}\ar@{-->}[r]^{}\ar[dr]_{q} &  W'\ar[d]^{q'}\ar@{-->}[r] & W''\ar[d] \ar[r] & T' \ar[dl]^{\eta}\\
	Y  \ar@{-->}[r]^{}\ar[d]_{f}  & Y' \ar[r]^{g}\ar[d]_{}   & T \ar[dl]^{} \\
	Z \ar[r]^{} & A &} 
$$
Because $K_{W''}+B_{W''} \sim_\mathbb{Q} 0/T$, $\kappa(K_{W''}+B_{W''})=\kappa(K_{S''}+B_{S''})$ and $\kappa(K_{W''}+B_{W''}/A)=\kappa(K_{S''}+B_{S''}/A)$ where $K_{S''}+B_{S''}=(K_{W''}+B_{W''})|_{S''}$. Let $(\widetilde{S},B_{\widetilde{S}})$ be a log smooth model of $(S'',B_{S''})$. Then, $\kappa(K_{\widetilde{S}}+B_{\widetilde{S}})=\kappa(K_{S''}+B_{S''})$ and $\kappa(K_{\widetilde{S}}+B_{\widetilde{S}}/Z)=\kappa(K_{\widetilde{S}}+B_{\widetilde{S}}/A)=\kappa(K_{S''}+B_{S''}/A)$. By assumption we conclude the subadditivity of Kodaira dimensions.

Now we turn to proving that (3) implies (1) as the implication from (2) to (3) is obvious. Assume that Conjecture \ref{conj} holds for $\mathbb{Q}$-factorial dlt pairs $(Y,B_Y)$ in dimension $\leq n+1$ where $B_Y$ is big$/Z$. Given a surjective morphism $f:(X,B) \rightarrow Z$ from a projective lc pair of dimension $n$ to a normal variety $Z$ with maximal Albanese dimension. We construct a dlt pair $(Y,\Delta_Y)$ where $\Delta_Y$ is big$/Z$ as follows. Consider a $\mathbb{P}^1$-bundle 
$$
\pi: Y=\mathbb{P}(\mathcal{O}_X \oplus \mathcal{O}_X (-1)) \rightarrow X.
$$
Let $p: Y \rightarrow C=C(X)$ be the birational contraction of the negative section
$E$ on $Y$ and $H$ a general sufficiently ample $\mathbb{Q}$-divisor on $C$ such that $\lfloor H \rfloor = 0$ and $K_Y +E+B_Y+p^\ast H$ is big where $B_Y$ is the birational tranform of the cone of $B$ on $C$. Set 
$$
\lambda = \inf\{t| \text{ $K_Y +E+B_Y+ tp^\ast H$ is pseudo-effective$/X$}\}
$$
and $\Delta_Y=E+B_Y+ \lambda p^\ast H$. Obviously $\Delta_Y$ is big and $K_Y+\Delta_Y =\pi^\ast (K_X+B)$ since the induced morphism $E\rightarrow X$ is an isomorphism. This completes the argument.
\end{proof}

\vspace{0.3cm}

\vspace{2cm}

\flushleft{Center} of Mathematical Sciences,\\
Zhejiang University,\\
Yugu Road,\\
Hangzhou, 310000,\\
China.\\
email: zhengyuhu16@gmail.com

\vspace{1cm}

\end{document}